\documentclass[a4paper, twoside, 11pt]{amsart}

\usepackage[margin=.95 in]{geometry}                 
\usepackage[foot]{amsaddr}
\geometry{letterpaper}              
\usepackage{graphicx}
\usepackage{amssymb}
\usepackage{mathrsfs}
\usepackage{enumitem}
\usepackage{array,multirow}
\usepackage{mathtools}
\usepackage[usenames,dvipsnames]{xcolor}
\usepackage{dsfont}
\usepackage{calligra}
\DeclareMathAlphabet{\mathcalligra}{T1}{calligra}{m}{n}
\usepackage{tikz}
\usepackage{tikz-cd}
\usetikzlibrary{shapes.misc}
\usetikzlibrary{decorations.markings}
\usepackage{etoolbox}
\usepackage{float}
\usepackage{multicol}
\usepackage{rotating}
\mathtoolsset{showonlyrefs=true}
\usepackage{arydshln}
\usetikzlibrary{positioning}
\usepackage{caption}
\usepackage{subcaption}
\usepackage{graphics}
\usepackage{threeparttable}

\usepackage{lscape}

\usepackage[T2A]{fontenc}
\usepackage[utf8]{inputenc}
\usepackage[russian, main=english]{babel}
 
\usepackage{hyphenat}
\hyphenation{ма-те-ма-ти-ка вос-ста-нав-ли-вать}

\makeatletter

\newcommand{\Rmnum}[1]{\expandafter\@slowromancap\romannumeral #1@}
\makeatother

\renewcommand\arraystretch{1.2}  
\newcommand{\To}[1]{\rule{0pt}{#1ex}}       
\newcommand{\Bo}[1]{\rule[-#1ex]{0pt}{0pt}} 


\newcommand\TikCircle[1][2.5]{\tikz[baseline=-#1]{\draw[thick](0,0)circle (0.05 cm);}}
\newcommand\TikCircleFill[1][2.5]{\tikz[baseline=-#1]{\fill[black](0,0)circle (0.05 cm);}}

\renewcommand\arraystretch{1.2}  

\newtheorem{theorem}{Theorem}[section]
\newtheorem{lemma}[theorem]{Lemma}

\newtheorem{corollary}[theorem]{Corollary}

\newtheorem{remark}[theorem]{Remark}
\newtheorem{definition}[theorem]{Definition}

\title[Homogeneous 2-nondegenerate hypersurfaces]{Homogeneous 2-nondegenerate CR manifolds of hypersurface type in low dimensions}

\author{David Sykes}\email{sykes@math.muni.cz}\address{Department of Mathematics and Statistics, Masaryk University, Kotl\'{a}\v{r}sk\'{a} 2, Brno, {611 37}, {Czech Republic}}

\keywords{Cauchy--Riemann hypersurfaces, homogeneous structures, Lie algebras, Levi-degenerate CR manifolds.}


\subjclass{32V05, 32V40, 53C30}

\begin{document}

\maketitle


\begin{abstract}
In a recent paper, the author and I. Zelenko introduce the concept of modified CR symbols for organizing local invariants of $2$-nondegenerate CR structures. In this paper, we consider homogeneous hypersurfaces in $\mathbb{C}^4$, a natural frontier in the CR hypersurface Erlangen programs, and classify up to local equivalence the locally homogeneous $2$-nondegenerate hypersufaces in $\mathbb{C}^4$ whose symmetry group dimension is maximal among all such structures with the same local invariants encoded in their respective modified symbols. In the considered dimension, we show that among homogeneous structures with given modified CR symbols, the most symmetric structures (termed model structures) are unique. The classification is then achieved indirectly through classifying the modified symbols of homogeneous hypersurfaces in $\mathbb{C}^4$, obtaining (up to local equivalence) nine model structures. The methods used to obtain this classification are then applied to find homogeneous hypersurfaces in higher dimensional spaces. In total $20$ locally non-equivalent maximally symmetric homogeneous $2$-nondegenerate  hypersurfaces are described in $\mathbb{C}^5$, and $40$ such hypersurfaces are described in $\mathbb{C}^6$, of which some have been described in other works while many are new. Lastly, two new sequences, indexed by $n$, of homogeneous $2$-nondegenerate  hypersurfaces in $\mathbb{C}^{n+1}$ are described. Notably, all examples from one of these latter sequences can be realized as left-invariant structures on nilpotent Lie groups.
\end{abstract}

\maketitle
%
%
%
%

\section{Introduction}\label{introduction}

This article's main result (Theorems \ref{ch3 main 7d theorem a} and \ref{ch3 main 7d theorem b}) is a classification up to local equivalence of $2$-nondegenerate real hypersurfaces  in $\mathbb{C}^4$ that are locally equivalent to homogeneous CR manifolds whose symmetry groups have maximal dimension relative to the local invariants encoded in their \emph{modified CR symbols}, which comprise sets that are local invariants introduced in \cite{SYKES2023108850}. In the sequel we refer to these most symmetric structures as \emph{modified symbol models} (or shortly \emph{models}). There are many more homogeneous structures than models (e.g., uncountably many homogeneous non-model structures are described by the formulas in \cite[Theorem 1.3, with $n=3$]{kolar2023new}), but these models occupy an important role in the general study of homogeneous structures due to a fundamental relationship  given in Corollary \ref{ch3 main 7d corollary} between general homogeneous $2$-nondegenerate hypersurfaces in $\mathbb{C}^4$ and the model structures classified here. Broadly, homogeneous CR manifolds have been studied in several works, including \cite{atanov2019orbits,beloshapka2011classification,beloshapka2010homogeneous,doubrov2021homogeneous,fels2008classification,labovskii1997dimensions,loboda2020holomorphically,medori2001maximally,porter2021absolute,santi2020homogeneous,SYKES2023108850}, in part for their thematic role in classical differential geometric treatments of local equivalence problems, acting as prototypical structures of which general structures are described as generalizations or deformations, as in \cite{chern1974,kolar2019complete} for example.

In particular, in \cite{kolar2019complete}, Kol\'{a}\v{r}--Kossovskiy prescribe a complete normal form for $2$-nondegenerate real hypersurfaces  in $\mathbb{C}^3$, describing them as deformations of the thoroughly studied maximally symmetric $2$-nondegenerate hypersurface in $\mathbb{C}^3$ (i.e., the tube over a future light cone). Due to dimension constraints there is just one \emph{modified CR symbol} for hypersurfaces in $\mathbb{C}^3$, which corresponds to there being just one model structure in $\mathbb{C}^3$, but already in $\mathbb{C}^4$ there are many possible modified CR symbols with corresponding maximally symmetric structures. Seeking to generalize the Kol\'{a}\v{r}--Kossovskiy normal form to higher dimensional settings, one needs the appropriate notion of \emph{model structures} to deform, and this paper's classification indeed gives a natural choice for such structures. In total, there are nine CR structures in the present classification, enumerated as types \Rmnum{1}, \Rmnum{2}, \Rmnum{3}, \Rmnum{4}.A, \Rmnum{4}.B, \Rmnum{5}.A, \Rmnum{5}.B, \Rmnum{6}, \Rmnum{7} in Table \ref{main theorem table}, of which six (types \Rmnum{4} through \Rmnum{7}) have been  described in other works \cite{gregorovic2021equivalence,labovskii1997dimensions,porter2021absolute,santi2020homogeneous}, a seventh (type \Rmnum{1}) was given by the author in \cite[Example 8.1]{SYKES2023108850}, and an eighth (type \Rmnum{2}) has the structure of a tube over a previously discovered affinely homogeneous hypersurface in $\mathbb{R}^4$ \cite[formula (1), Theorem 2]{mozey2000}. For these structures that have been previously described in the aforementioned works, references to coordinate descriptions of the hypersurfaces (even given by defining equations) are in Table \ref{main theorem table}, and in the forthcoming text \cite{gregorovic2023defining}, we derive defining equation descriptions of the new hypersufaces in the list (i.e., types \Rmnum{1} and \Rmnum{3}), which ultimately required considerable calculation and new techniques.

Hypersurfaces in $\mathbb{C}^{n+1}$ that are locally equivalent to homogeneous CR manifolds have been classified up to local equivalence for $n=1$ in \cite{cartan1932geometrieII,cartan1933geometrie} and for $n=2$ in \cite{doubrov2021homogeneous,fels2008classification,loboda2020holomorphically}, which naturally leads to our present study of structures in $\mathbb{C}^4$. In more detail, for $n=1$, noting that the homogeneous CR hypersurfaces in $\mathbb{C}^2$ that are not equivalent to a hyperplane are Levi-nondegenerate, Cartan was able to complete the classification by establishing the \emph{gap phenomenon} that homogeneous Levi-nondegenerate hypersurfaces in $\mathbb{C}^2$ with non-maximal symmetry group dimension are simply transitive (therefore having $3$-dimensional symmetry groups), and Cartan then investigated the classification of these simply transitive structures using Bianchi's classification of $3$-dimensional Lie algebras. For $n=2$, homogeneous Levi-degenerate hypersurfaces in $\mathbb{C}^3$ were classified in the major work of Fels--Kaup in \cite{fels2008classification}, where they show that all such hypersurfaces are tubes over affinely homogeneous hypersurfaces in $\mathbb{R}^3$ -- a phenomenon that does not persist in higher dimensional settings; Fels--Kaup then obtain the CR classification through application of the earlier classifications \cite{doubrov1995homogeneous, eastwood1999affine} of affinely homogeneous hypersurfaces in $\mathbb{R}^3$, where, notably, showing CR inequivalence of tubes over different affinely homogeneous hypersurfaces is crucial step in \cite{fels2008classification}. The complete classification of Levi-nondegenerate CR hypersurfaces in $\mathbb{C}^3$ was obtained more recently in \cite{doubrov2021homogeneous,loboda2020holomorphically}, building upon contributions from many research groups over the preceding decades, and we refer readers to \cite{doubrov2021homogeneous} for a historic outline of these developments $\mathbb{C}^3$.   

In higher dimensions the classification is complicated by features of Levi degeneracy. By limiting considerations to hypersurfaces that are locally equivalent to a homogeneous CR manifold whose symmetry group's dimension is maximal relative to the signature of the structure's Levi form, this limited classification for Levi-nondegenerate hypersurfaces is obtained in \cite{medori2001maximally} for all $n$. The same classification problem is solved by the main results of \cite{porter2021absolute,sykes2021maximal} for the $2$-nondegenerate hypersurfaces whose Levi form has a $1$-dimensional kernel, that is, a classification of homogeneous structures whose symmetry algebras have maximal dimension relative to their Levi form for all $n$. For Levi-degenerate structures with arbitrary Levi forms it is furthermore interesting to consider those with maximal symmetry groups relative to the local invariants encoded in maps referred to by Freeman in \cite{freeman1977local} as \emph{generalized Levi forms} -- a term with several non-equivalent definitions, so we stress that the present usage refers to the definition in \cite{freeman1977local}. 

For $2$-nondegenerate structures, these latter local invariants are also encoded in the aforementioned modified CR symbols, so the classification that we obtain gives in particular all homogeneous CR structures on hypersurfaces in $\mathbb{C}^4$ with a maximal symmetry algebra relative to the structure's generalized Levi forms, of which there are $8$ in total. The ninth structure in the classification obtained here is submaximal with respect to its generalized Levi forms despite being maximal with respect to its modified symbols, illustrating the general fact that modified CR symbols encode more local invariants than generalized Levi forms. We show that this submaximal model has a $9$-dimensional symmetry algebra and the symmetry algebra of its associated dynamical Legendrian contact structure (introduced in \cite{SYKES2023108850}) is the 14-dimensional exceptional Lie algebra $\mathfrak{g}_2$, a case of special interest because this is the first known example for which a \emph{modified symbol model's} associated DLC structure has finite dimensional symmetry group and yet the CR structure's symmetry group dimension is strictly less than that of its DLC structure. Notably, in $\mathbb{C}^4$ the non-planar homogeneous Levi-degenerate hypersurfaces consist of $2$-nondegenerate and $3$-nondegenerate structures, and  the recent \cite{kruglikov20233} classifies the $3$-nondegenerate homogeneous structures in $\mathbb{C}^4$, which further motivates our present study of the $2$-nondegenerate class.

The secondary purpose of this text is to introduce the methods that were used to obtain the classification, which can, in principle, be applied in higher dimensional settings. Fundamentally, the approach consists of analyzing the algebraic properties satisfied by a homogeneous hypersurface's modified CR symbols, which reduces to a problem of assessing consistency of a certain overdetermined algebraic system (given in \eqref{system}). Modified symbols for which this system can be solved admit reductions used to generate homogeneous CR manifolds that we refer to as flat CR manifolds (see Definition \ref{flat structure definition}). These methods are effective for finding new examples of homogeneous $2$-nondegenerate CR structures on hypersurfaces in higher dimensional spaces, namely flat structures. To illustrate this, in Sections \ref{Extending and linking  ARMS} and \ref{9-dimensional examples with rank-$1$ Levi kernel} we describe $20$ locally non-equivalent examples of $9$-dimensional homogeneous hypersurfaces obtained via these methods, of which some have been studied previously while many are new. We emphasize the terms \emph{flat} and \emph{modified symbol model} have formally different definitions -- the former being a structure uniquely defined by algebraic data, and the latter being a structure whose symmetry group dimension attains some upper bound -- although they turn out to be equivalent properties in $\mathbb{C}^4$; we also caution that while our usage of \emph{flat} is consistent with \cite{SYKES2023108850} and the context of the Tanaka theory therein, the term appears with several inequivalent meanings across related literature.

Of these $20$ $9$-dimensional examples, several are obtained from the nine $7$-dimensional modified symbol models (which are flat) via two constructions that we introduce in Section \ref{Extending and linking  ARMS} called \emph{extending} and \emph{linking} abstract reduced modified symbols. By linking and extending the flat structures in $\mathbb{C}^4$, we obtain $14$ locally homogeneous non-equivalent structures in $\mathbb{C}^5$  and $38$ such structures in $\mathbb{C}^6$. For every $p,q,\in \mathbb{N}$ and $2$-nondegenerate flat hypersurface $M$ in $\mathbb{C}^{n+1}$, we describe associated $2$-nondegenerate hypersurface-type CR structures on $M\times \mathbb{C}^{(p+q)}$, that we call the $2(p+q)$-dimensional signature $(p,q)$ extensions of $M$. These extensions are themselves flat CR structures. Thus from every $2$-nondegenerate flat hypersurface, we generate sequences of higher-dimensional homogeneous examples.  Maximally symmetric homogeneous $2$-nondegenerate hypersurfaces in $\mathbb{C}^{n+1}$ were found in \cite{porter2021absolute} for arbitrary $n$, and, for $n>4$, these structures are all  extensions of the unique maximally symmetric model in $\mathbb{C}^4$ also described in \cite[Example 1]{labovskii1997dimensions}. While all of these structures are described by Lie-theoretic means, we have developed techniques for deriving their hypersurface realizations described in terms of defining equations, which will appear in the forthcoming text \cite{gregorovic2023defining}.

Given the current paucity of known high-dimensional $2$-nondegenerate homogeneous CR hypersurface examples, it is  natural to ask if all sufficiently high-dimensional flat structures  can be constructed from low-dimensional structures via combinations of extensions and linkings -- that is, if their reduced modified symbols are \emph{indecomposable}  (in the sense of Definition \ref{indecomposable ARMS}). We address this in section \ref{Sequences of flat structures in higher dimensions}, where, by applying the same methods used to obtain the aforementioned low-dimensional examples, we obtain two new sequences of homogeneous hypersurfaces in $\mathbb{C}^{n+1}$, indexed by their CR dimension $n$, that are different from any sequence generated by extensions and linkings. One of these two sequences has the property that each example in the sequence can be described as a left-invariant structure on a nilpotent Lie group.
The other sequence is interesting in contrast to the first one because examples in both sequences share the same generalized Levi forms despite being locally non-equivalent.

\section{Preliminaries}\label{preliminaries}
In this section we introduce definitions and precursory theorems necessary to derive this paper's main results. We also introduce in Section \ref{Matrix representations of ARMS} matrix representations of certain Lie algebras that will be of fundamental importance in the subsequent analysis. Introducing such representations is necessary because there is no established structure theory for the considered class of Lie algebras.

\subsection{Definitions and precursory theorems}

So that this text is self contained and because the concepts are rather new, we introduce here minimal working definitions of \emph{CR symbols}, \emph{modified CR symbols}, and \emph{reduced modified CR symbols} of homogeneous $2$-nondegenerate CR structures, along with some of their basic properties. For a more detailed exposition and study of these objects also defined in more general settings where homogeneity is not assumed, we refer the reader to \cite{porter2021absolute,SYKES2023108850}.

Throughout the sequel, let $M$ be a real $2$-nondegenerate hypersurface in $\mathbb{C}^{n+1}$ that is locally equivalent to a homogeneous CR manifold. Let $H$ denote the tangential Cauchy--Riemann bundle of $M$ (i.e., the holomorphic part of the complexified maximal complex subbundle in $TM$), let $K\subset H$ be the Levi kernel, and let $r$ be the rank of $K$. The Levi form $\mathcal{L}_p:H_p\times H_p\to \mathbb{C} T_pM/(H_p\oplus \overline{H_p})$ descends to a nondegenerate Hermitian form $\ell_p:H_p/K_p\times H_p/K_p\to \mathbb{C} T_pM/ (H_p\oplus \overline{H_p})$ at every point $p\in M$, and for each vector $v\in K_p$ there is an $\ell_p$-self-adjoint antilinear operator $\mathrm{ad}_v:H/K\to H/K$ defined by 
\begin{align}\label{Levi kernel adjoint map}
\mathrm{ad}_v(X_p+K_p):=\left[V,\overline{X}\right]_p\pmod{\overline{H}_p\oplus K_p}
\end{align}
for all $ p\in M$, $X\in \Gamma(X)$, and $V\in \Gamma(K)$ with $V_p=v$.
One can show that this definition of $\mathrm{ad}_v$ does not depend on the choices of $V$ and depends only on the value of $X$ at $p$. Note that uniform $2$-nondegeneracy is equivalent to the map $v\mapsto \mathrm{ad}_v$ being injective on $K_p$ for all $p\in M$. A consequence of uniform $2$-nondegeneracy is that
\begin{equation}
\label{estim1}
\dbinom{n-r+1}{2}\geq r.
\end{equation}

The space 
\begin{align}\label{heisenberg decomposition}
\mathfrak{g}_{-}(p):= \mathfrak{g}_{-2,0}(p)\oplus \mathfrak{g}_{-1,-1}(p)\oplus \mathfrak{g}_{-1,1}(p)
\end{align}
with $\mathfrak{g}_{-2,0}(p):=\mathbb{C} T_pM/ (H_p\oplus \overline{H_p})$, $ \mathfrak{g}_{-1,-1}(p):=\overline{H}_p/\overline{K}_p$, and $ \mathfrak{g}_{-1,1}(p):= {H}_p/ {K}_p$, inherits the structure of a $(2n+1-2r)$-dimensional complex Hiesenberg algebra from the Levi form by defining its nontrivial brackets via the formula
\[
[v,\overline{w}]=2i\ell_p(v,w)
\quad\quad\forall\, v,w\in \mathfrak{g}_{-1,1}(p).
\]

Here, and throughout the sequel, we use the notation $\mathfrak{g}_{j}(p):=\bigoplus_{k} \mathfrak{g}_{j,k}(p)$,
where the summation is over all $k$ for which $ \mathfrak{g}_{j,k}$  has been defined. In a standard way, the Heisenberg algebra's structure confers a conformal symplectic structure on $\mathfrak{g}_{-1}(p)$, and the conformal symplectic algebra $\mathfrak{csp}\big(\mathfrak{g}_{-1}(p)\big)$ can be regarded both as an algebra of endomorphisms of $\mathfrak{g}_{-1}(p)$ and of derivations of $\mathfrak{g}_{-}(p)$.  We will switch freely between both interpretations of $\mathfrak{csp}\big(\mathfrak{g}_{-1}(p)\big)$. 

Letting $\iota$ denote the involution on $\mathfrak{g}_{-}(p)$ induced by the usual conjugation on $\mathbb{C} T_pM$, the map $\iota$ induces an involution on $\mathfrak{csp}\big(\mathfrak{g}_{-1}(p)\big)$ given by the formula
\begin{align}\label{extending the involution}
\iota(\varphi) (v):=\iota\circ \varphi\circ \iota(v) \quad\quad\forall\, \varphi\in\mathfrak{csp}\big(\mathfrak{g}_{-1}(p)\big),\, v\in \mathfrak{g}_{-}(p),
\end{align}
and hence $\iota$ extends uniquely to an antilinear involution on the Lie algebra $\mathfrak{g}_{-}(p)\rtimes \mathfrak{csp}\big(\mathfrak{g}_{-1}(p)\big)$ satisfying \eqref{extending the involution}. One can show that,
$\iota([v,w])=[\iota(v),\iota(w)]$ for all $v,w\in \mathfrak{g}_{-}(p)\rtimes \mathfrak{csp}\big(\mathfrak{g}_{-1}(p)\big)$.
For $v\in K_p$, let $\widetilde{\mathrm{ad}}_v\in \mathfrak{csp}\big(\mathfrak{g}_{-1}(p)\big)$ be the endomorphism of $\mathfrak{g}_{-1}$ given by
\[
\widetilde{\mathrm{ad}}_v(w):=
\begin{cases}
0 & \mbox{ if }w\in \mathfrak{g}_{-1,1}(p) \\
\mathrm{ad}_v(\overline{w}) & \mbox{ if }w\in \mathfrak{g}_{-1,-1}(p),
\end{cases}
\]
and define
\[
\mathfrak{g}_{0,2}(p):=\left\{\left.\widetilde{\mathrm{ad}}_v\,\right|\,v\in K_p\right\}
\quad\mbox{ and }\quad 
\mathfrak{g}_{0,-2}(p):=\iota\left(\mathfrak{g}_{0,2}(p)\right).
\]
Also define
\[
\mathfrak{g}_{0,0}(p):=\left\{\left. v\in \mathfrak{csp}\big(\mathfrak{g}_{-1}(p)\big)\,\right|\, \left[v,\mathfrak{g}_{j,k}\right]\subset\mathfrak{g}_{j,k}\, \forall\,(j,k)\in\{(-2,0),(-1,\pm1),(0,\pm2)\} \right\}.
\]

\begin{definition}[introduced in \cite{porter2021absolute}]\label{CR symbol}
The \emph{CR symbol} of the structure on $M$ at a point $p\in M$ is the bi-graded subspace $\mathfrak{g}^0(p)$ of $\mathfrak{g}_{-}(p)\rtimes \mathfrak{csp}\big(\mathfrak{g}_{-1}(p)\big)$ given by
\begin{align}\label{symbol decomposition}
\mathfrak{g}^0(p):=\mathfrak{g}_{-}(p)\oplus\mathfrak{g}_{0,-2}(p)\oplus\mathfrak{g}_{0,0}(p)\oplus\mathfrak{g}_{0,2}(p).
\end{align}
The CR symbol is \emph{regular} if it is a subalgebra of $\mathfrak{g}_{-}(p)\rtimes \mathfrak{csp}\big(\mathfrak{g}_{-1}(p)\big)$.
\end{definition}
CR symbols are basic local invariants of $2$-nondegenerate CR structures in one-to-one correspondence with generalized Levi forms. The symbol's definition organizes data from the generalized Levi forms into an algebraic structure useful for applications of Tanaka prolongation and descriptions of the structures' symmetry algebras.
\begin{remark}\label{CR symbol matrix representation}
After fixing a basis of $H_p/K_p$ and $\mathbb{C}T_pM/(H_p\oplus \overline{H})$, we can represent $\ell_p$ with respect to those bases by a matrix $H_{\ell_p}$, and similarly we can represent each operator in $\{\mathrm{ad}_v\,|\, v\in K_p\}$ by a matrix with respect to that same basis. If we then fix a basis $(v_1,\ldots, v_r)$ of $K_p$ and let $A_j$ be the matrix representing $\mathrm{ad}_{v_j}$, then the CR symbol at $p$ is given by the matrix representation
\[
(\mathrm{span}_\mathbb{R}\{H_{\ell_p}\}\,,\,\mathrm{span}_{\mathbb{C}}\{A_1,\ldots, A_r\}).
\]
\end{remark}

Definition \ref{CR symbol} remains well posed if, instead of assuming that $(M,H)$ is locally homogeneous, we assume only that $(M,H)$ is uniformly $2$-nondegenerate. Our assumption of homogeneity, however, implies that all CR symbols on $M$ are equivalent under the equivalence relation that, for $p,q\in M$, $\mathfrak{g}^0(p)\cong \mathfrak{g}^0(q)$ if there is a Lie algebra isomorphism between $ \mathfrak{g}_{-}(p)\rtimes \mathfrak{csp}\big(\mathfrak{g}_{-1}(p)\big)$ and $\mathfrak{g}_{-}(q)\rtimes \mathfrak{csp}\big(\mathfrak{g}_{-1}(q)\big)$ that restricts to an isomorphism between the symbols $\mathfrak{g}^0(p)$ and $\mathfrak{g}^0(q)$ and commutes with the involution defined on  $\mathfrak{g}^0(p)$ and $\mathfrak{g}^0(q)$. When this latter property is satisfied, we say that $M$ has a constant symbol of type $\mathfrak{g}^0$, where $\mathfrak{g}^0$ is any CR symbol such that $\mathfrak{g}^0\cong \mathfrak{g}^0(p)$ for all $p\in M$.

Going  forward let us fix a CR symbol  $\mathfrak{g}^0$ such that  $(M,H)$ has a constant symbol of type $\mathfrak{g}^0$. Such a symbol indeed exists because we are  assuming that $M$ is locally homogeneous (e.g., one can take $\mathfrak{g}^0=\mathfrak{g}^0(p)$ for some point $p\in M$). By definition, $\mathfrak{g}^0$ has an involution defined on it and has the same decomposition
\[
\mathfrak{g}^0=\mathfrak{g}_{-2}\oplus \mathfrak{g}_{-1}\oplus \mathfrak{g}_{0}=\mathfrak{g}_{-2,0}\oplus \mathfrak{g}_{-1,-1}\oplus \mathfrak{g}_{-1,1}\oplus \mathfrak{g}_{0,-2}\oplus \mathfrak{g}_{0,0}\oplus\mathfrak{g}_{0,2}
\] 
as in \eqref{heisenberg decomposition} and \eqref{symbol decomposition}, with the Heisenberg component $\mathfrak{g}_{-}:=\mathfrak{g}_{-2,0}\oplus\mathfrak{g}_{-1,-1}\oplus \mathfrak{g}_{-1,1}$.

To define the modified CR symbols of $(M,H)$, we first introduce the adapted (partial) frame bundle $\mathrm{pr}:P^0\to M$, which is the fiber bundle over $M$ whose fiber $P^0_p$ at a point $p\in M$ is given by
\[
P^0_p:=
\left\{\left.\varphi\right|_{\mathfrak{g}_{-1}}:\mathfrak{g}_{-1}\to \mathfrak{g}_{-1}(p)\,\left|\, 
\parbox{6.5cm}{ $\varphi $ is a Lie algebra isomorphism from $\mathfrak{g}_{-}$ to  $\mathfrak{g}_{-}(p)$, $\varphi(\mathfrak{g}_{i,j})\subset\mathfrak{g}_{i,j}(p)\, \forall\, i,j $, and\\ $\left(\varphi|_{\mathfrak{g}_{-1}}\right)^{-1}\circ \mathfrak{g}_{0,\pm 2}(p)\circ \left(\varphi|_{\mathfrak{g}_{-1}}\right)= \mathfrak{g}_{0,\pm 2} $}
\right.
\right\}.
\]
The distribution $(K\oplus\overline{K})\cap TM$ is involutive, and thus generates a foliation of $M$. We let $\mathcal{N}$ denote the space consisting of leaves of this foliation, and let $\pi:M\to \mathcal{N}$ denote the natural projection. Let us assume that $\mathcal{N}$ with its quotient topology has the structure of a smooth manifold, which can be achieved by shrinking $M$ (i.e., replacing $M$ by a  sufficiently small neighborhood in $M$). Notice that the differential $\pi_*$ of $\pi$ naturally identifies $\mathfrak{g}_{-1}(p)$ with a subspace in $T_{\pi(p)}\mathcal{N}$ for all $p\in M$. For a fiber $P^0_{\pi(p)}$ of $\pi\circ\mathrm{pr}:P^0\to\mathcal{N}$ any fixed $\psi\in P^0_{\pi(p)}$  we have a local embedding of $\Phi_\psi$ into the conformal symplectic group $CSp(\mathfrak{g}_{-1})$ given by
\[
\Phi_\psi(\varphi):=(\pi_*\circ \psi)^{-1}\circ \pi_*\circ \varphi\in CSp(\mathfrak{g}_{-1}),
\]
and if we apply the (left) Maurer--Cartan of form of $CSp(\mathfrak{g}_{-1})$ to the tangent space of the image of this embedding at the point $\Phi_\psi(\psi)$, the Maurer--Cartan form maps that tangent space to a subspace in $ \mathfrak{csp}(\mathfrak{g}_{-1})$ that we label as $\mathfrak{g}_0^{\mathrm{mod}}(\psi)$. In terms of these subspaces  $\mathfrak{g}_0^{\mathrm{mod}}(\psi)$, we can now define the modified symbols.
\begin{definition}[introduced in \cite{SYKES2023108850}]\label{modified CR symbol}
The \emph{modified CR symbol} of the structure on $M$ at a point $\psi\in P^0$ is the subspace $\mathfrak{g}^{0,\mathrm{mod}}(\psi)$ of $\mathfrak{g}_{-}\rtimes \mathfrak{csp}\big(\mathfrak{g}_{-1}\big)$ of the form 
\[
\mathfrak{g}^{0,\mathrm{mod}}(\psi)=\mathfrak{g}_{-}\oplus\mathfrak{g}_0^{\mathrm{mod}}(\psi)
\] 
where $\mathfrak{g}_0^{\mathrm{mod}}(\psi)$ is given through the above Maurer--Cartan form mediated construction.
\end{definition}
The modified CR symbols $\mathfrak{g}^{0,\mathrm{mod}}(\psi)$ can coincide with the CR symbol $\mathfrak{g}^{0}$, but in general the two symbols differ. There is, however, a weaker relationship between the two symbols that always holds, namely
$\mathfrak{g}_{0,0}\subset \mathfrak{g}^{0,\mathrm{mod}}(\psi)$ for all $\psi\in P^0$.
The subgroup $G_{0,0}$ in $CSp(\mathfrak{g}_{-1})$ generated by $\mathfrak{g}_{0,0}$ acts on $P^0$ giving $P^0$ the structure of a principle bundle with structure group $G_{0,0}$. Modified symbols on the orbits of $G_{0,0}$ are related by the adjoint action of $G_{0,0}$ on $\mathfrak{csp}(\mathfrak{g}_{-1})$. Specifically, for $\psi\in P^0$ and $g\in G_{0,0}$, letting $g.\psi$ denote image of $\psi$ under the structure group action of $g$, we have
\begin{align}\label{orbit of symbols}
\mathfrak{g}_0^{\mathrm{mod}}(g.\psi)=g^{-1}\circ \mathfrak{g}_0^{\mathrm{mod}}(\psi)\circ g.
\end{align} 
For a point $p\in M$, the set 
\begin{align}\label{orbit of symbols invariant}
\left\{\left.
\mathfrak{g}^{0,\mathrm{mod}}(g.\psi)
\, \right|\,
g\in G_{0,0}
\right\}=\left\{\left.
\mathfrak{g}_{-}\oplus \left( g^{-1}\circ \mathfrak{g}_0^{\mathrm{mod}}(\psi)\circ g\right)
\, \right|\,
g\in G_{0,0}
\right\},
\end{align}
given by choosing any $\psi\in P^0_p$, is a local invariant of the CR structure $H$ at $p$. 
%

We are now going to describe a reduction procedure that produces subbundles of $P^0$, called \emph{reductions}, from which we will obtain the aforementioned \emph{reduced modified CR symbols}. Among these reductions are the level sets in $P^0$ of the mapping $\psi\mapsto \mathfrak{g}^{0,\mathrm{mod}}(\psi)$.  Homogeneity of $(M,H)$  implies each such level set will project surjectively onto $M$. 
and $P^{0,\mathrm{red}}$ is a reduction of the  $G_{0,0}$-principal bundle $P^0$, whose structure group we label $G_{0,0}^{\mathrm{red}}$. We call the connected components of $P^{0,\mathrm{red}}$ reductions of $P^0$.  Note that this reduction depends on $\psi$ and that $P^0$ has many such reductions. Since we work only with connected components of the level sets, in the sequel we use $P^{0,\mathrm{red}}$ to label any such connected component and let $G_{0,0}^{\mathrm{red}}$ denote its connected structure group. We will now introduce analagous reductions of $P^{0,\mathrm{red}}$ that we also simply call reductions of $P^0$.
\begin{definition}[introduced in \cite{SYKES2023108850}]\label{reduced modified CR symbol}
Let $P^{0,\mathrm{red}}\subset P^0$ be a reduction of $P^0$. The \emph{reduced modified CR symbol} of the structure on $M$ at a point $\psi\in P^{0,\mathrm{red}}$ associated with the reduction $P^{0,\mathrm{red}}$ is the subspace $\mathfrak{g}^{0,\mathrm{red}}(\psi)$ of $\mathfrak{g}_{-}\rtimes \mathfrak{csp}\big(\mathfrak{g}_{-1}\big)$ defined by exactly the same definition given for modified symbols but with $P^{0,\mathrm{red}}$ used in place of $P^0$.
\end{definition}

Just as the reduction $P^{0,\mathrm{red}}$ of $P^0$ defined above was given as a level set of the mapping $\psi\mapsto \mathfrak{g}^{0,\mathrm{mod}}(\psi)$, we can now define reductions of $P^{0,\mathrm{red}}$ to be level sets of the mapping $\psi\mapsto \mathfrak{g}^{0,\mathrm{red}}(\psi)$. As mentioned, given any reduction $P^{0,\mathrm{red}}$ of $P^0$, we will call these subsequent reductions of $P^{0,\mathrm{red}}$ reductions of $P^0$ as well. For any such reduction, we define its associated reduced modified symbols using Definition \ref{reduced modified CR symbol}. Thus we can apply an iterative process, where we first take a connected component of a level set of the mapping $\psi\mapsto \mathfrak{g}^{0,\mathrm{mod}}(\psi)$ to obtain a reduction of $P^0$, then consider this reduction's associated reduced modified symbols as defined in Definition \ref{reduced modified CR symbol}, then take a connected component of a level set within this last reduction of the mapping $\psi\mapsto \mathfrak{g}^{0,\mathrm{red}}(\psi)$ to obtain a new reduction of $P^0$, and finally repeat these last two steps any number of times.  We call this iterative process the \emph{(geometric) reduction procedure} for $P^0$, which we apply to obtain reductions of $P^0$ whose associated reduced modified symbols are in some ways easier to study than the original modified symbols, while, nevertheless encoding all of the information about $(M,H)$ that is encoded in the original modified symbols. 
These reductions are especially useful  for the study of homogeneous hypersurfaces due to the following lemma.

\begin{lemma}\label{constant symbol reduction lemma}
If $(M,H)$ is a homogeneous $2$-nondegenerate CR manifold then there exists a reduction $P^{0,\mathrm{red}}$ of $P^0$ whose associated reduced modified symbols are all the same and invariant under the previously defined involution $\iota$ of $\mathfrak{csp}(\mathfrak{g}_{-1})$, that is, the map $\psi\mapsto \mathfrak{g}^{0,\mathrm{red}}(\psi)$ is constant on $P^{0,\mathrm{red}}$ and $\iota\left( \mathfrak{g}^{0,\mathrm{red}}(\psi)\right)= \mathfrak{g}^{0,\mathrm{red}}(\psi)$. This reduction's associated reduced modified symbol will, moreover, be a subalgebra of $\mathfrak{g}_{-}\rtimes \mathfrak{csp}(\mathfrak{g}_{-1})$.
\end{lemma}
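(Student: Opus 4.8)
The plan is to obtain the reduction by running the geometric reduction procedure of Section~\ref{preliminaries} to termination and reading off its output, using homogeneity to control the procedure at every stage. First I would fix a transitive group $G$ of CR automorphisms of $(M,H)$ and recall that $G$ lifts to an action on $P^0$ preserving every canonically defined object, in particular the maps $\psi\mapsto\mathfrak{g}^{0,\mathrm{mod}}(\psi)$ and, for any $G$-invariant reduction, $\psi\mapsto\mathfrak{g}^{0,\mathrm{red}}(\psi)$. Since these maps are $G$-invariant and $G$ acts transitively on $M$, each of their level sets projects surjectively onto $M$ (as already noted in the text for $\mathfrak{g}^{0,\mathrm{mod}}$, and by the same reasoning for $\mathfrak{g}^{0,\mathrm{red}}$), so every such level set is a genuine reduction whose connected structure group $G_{0,0}^{\mathrm{red}}$ is the stabilizer, under the adjoint action \eqref{orbit of symbols}, of the common symbol value. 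This surjectivity is the key structural input supplied by homogeneity.

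Next I would show the iterative procedure terminates with a constant reduced symbol. At each stage we have a reduction with connected structure group $G_{0,0}^{\mathrm{red}}$, along whose fibers the reduced symbol varies by conjugation as in \eqref{orbit of symbols}. If the reduced symbol is non-constant along a fiber, its $G_{0,0}^{\mathrm{red}}$-orbit is positive-dimensional, so passing to a level set strictly decreases $\dim G_{0,0}^{\mathrm{red}}$; as this dimension is a non-negative integer, after finitely many steps the reduced symbol is constant along fibers. Homogeneity then upgrades fiberwise constancy to genuine constancy: were the value to differ over two points of $M$, the corresponding level set would omit a whole fiber and hence fail to surject onto $M$, contradicting the surjectivity recalled above. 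This produces a reduction $P^{0,\mathrm{red}}$ on which $\psi\mapsto\mathfrak{g}^{0,\mathrm{red}}(\psi)$ is constant.

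For $\iota$-invariance I would exploit that $\iota$ is a canonical antilinear automorphism of $\mathfrak{g}_-\rtimes\mathfrak{csp}(\mathfrak{g}_{-1})$ induced by conjugation on $\mathbb{C}TM$, so it commutes with the reduction procedure and, crucially, satisfies $\iota(\mathfrak{g}_{0,0})=\mathfrak{g}_{0,0}$. Hence $\iota$ carries any reduction to a reduction and carries a constant symbol value $\mathfrak{s}$ to the constant value $\iota(\mathfrak{s})$ of the image reduction. Since $\mathfrak{s}$ and $\iota(\mathfrak{s})$ are both attained by reductions produced by the procedure, they lie in one orbit under structure-group conjugation, and I would secure $\iota$-invariance by replacing $\mathfrak{s}$ with a conjugate $h\mathfrak{s}h^{-1}$ satisfying $\iota(h\mathfrak{s}h^{-1})=h\mathfrak{s}h^{-1}$. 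I expect this descent to require the most care, as it amounts to solving a real-form condition of the form $h^{-1}\iota(h)\in\mathrm{Stab}(\mathfrak{s})$, whose solvability hinges on $\iota$ being an involution together with the compatibility $\iota(\mathfrak{g}_{0,0})=\mathfrak{g}_{0,0}$; alternatively one can build $\iota$-stability into the procedure by taking, at each stage, level sets of the $\iota$-symmetrized data.

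Finally, I would deduce the subalgebra property from constancy. Writing $\mathfrak{g}^{0,\mathrm{red}}=\mathfrak{g}_-\oplus\mathfrak{g}_0^{\mathrm{red}}$, the inclusion $[\mathfrak{g}_0^{\mathrm{red}},\mathfrak{g}_-]\subset\mathfrak{g}_-$ is automatic because elements of $\mathfrak{csp}(\mathfrak{g}_{-1})$ act as grading-preserving derivations of the Heisenberg algebra $\mathfrak{g}_-$, so the only remaining condition is $[\mathfrak{g}_0^{\mathrm{red}},\mathfrak{g}_0^{\mathrm{red}}]\subset\mathfrak{g}_0^{\mathrm{red}}$. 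Here I would use that $\mathfrak{g}_0^{\mathrm{red}}(\psi)$ is, by construction, the Maurer--Cartan image of the tangent space to the embedded image of $\Phi_\psi$ in $CSp(\mathfrak{g}_{-1})$; when this image is constant in $\psi$, the local embedded images fit together into an open part of a single Lie subgroup of $CSp(\mathfrak{g}_{-1})$ whose Lie algebra is exactly $\mathfrak{g}_0^{\mathrm{red}}$. The Maurer--Cartan integrability $d\omega=-\tfrac12[\omega,\omega]$ then forces $\mathfrak{g}_0^{\mathrm{red}}$ to be closed under the bracket, which completes the argument.
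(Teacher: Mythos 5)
Your termination argument and your use of homogeneity are essentially the paper's own proof in different clothing: the paper packages the same idea as the dimension sandwich \eqref{reduction dimensions}, observing that fibers of a level set are orbits of a closed subgroup of the connected structure group $G_{0,0}^{\mathrm{red}}$, so non-constancy along a fiber forces a strict drop in dimension, while homogeneity guarantees every level set still surjects onto $M$. Your Maurer--Cartan argument for the subalgebra property is also sound in outline (a constant symbol makes the image of $\Phi_\psi$ an integral manifold of the left-invariant distribution generated by $\mathfrak{g}_0^{\mathrm{red}}$, whence bracket-closure), but note the paper does not re-prove this: it cites \cite[Proposition 5.1 and Section 6]{SYKES2023108850}, of which your sketch is in substance the content.

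The genuine gap is your $\iota$-invariance step. Your primary route --- conjugating the constant value $\mathfrak{s}$ to an $\iota$-fixed representative by solving $h^{-1}\iota(h)\in\mathrm{Stab}(\mathfrak{s})$ --- is a nonabelian descent problem whose solvability you flag but never establish, and it can genuinely fail: fixed points of an antilinear involution acting on an orbit of a complex group need not exist, the obstruction being a possibly nontrivial class in the cohomology set $H^1(\mathbb{Z}/2,\mathrm{Stab}(\mathfrak{s}))$. Even the premise that $\mathfrak{s}$ and $\iota(\mathfrak{s})$ lie in a single structure-group orbit is unjustified at the stage where you need it, since by constancy the final structure group stabilizes $\mathfrak{s}$, and $\iota$ may carry your chosen connected-component reduction to a different branch of the procedure rather than to a conjugate one. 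The paper closes exactly this hole with a specific nontrivial input from \cite{SYKES2023108850}: the involution $\iota$ lifts to an involution of $P^0$ whose fixed-point set $\Re P^0$ is a \emph{nonempty} real submanifold of half the real dimension of $P^0$, and the modified symbols at points of $\Re P^0$ are automatically $\iota$-invariant; one then runs the entire reduction procedure taking only level sets of $\iota$-invariant symbol values. Your fallback of ``level sets of the $\iota$-symmetrized data'' gestures in this direction but does not supply the needed existence statement: the $\iota$-symmetrization of a symbol value need not be the symbol of any point of $P^0$, so its ``level set'' need not be a reduction at all. To repair the proof you need precisely the $\Re P^0$ statement (or an equivalent fixed-point existence result), not the descent computation.
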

\begin{proof}
From the homogeneity of $(M,H)$, at every step of the reduction procedure we obtain a reduction $P^{0,\mathrm{red}}$ of $P^0$ for which the set $\{\mathfrak{g}^{0,\mathrm{red}}(\psi)\,|\, \psi\in P^{0,\mathrm{red}}_p\}$ of associated reduced modified symbols on the fiber $P^{0,\mathrm{red}}_p$ over a point $p\in M$ does not depend on $M$. Therefore any level set $\widetilde{P}^{0,\mathrm{red}}$ in $P^{0,\mathrm{red}}$ of the mapping  $\psi\mapsto \mathfrak{g}^{0,\mathrm{red}}(\psi)$ will project surjectively onto $M$, and
\begin{align}\label{reduction dimensions}
\dim(M)\leq \dim\left(\widetilde{P}^{0,\mathrm{red}}\right) \leq \dim\left(P^{0,\mathrm{red}}\right).
\end{align}
Since fibers of $\widetilde{P}^{0,\mathrm{red}}$ are orbits of a closed subgroup in the structure group $G_{0,0}^{\mathrm{red}}$ of $P^{0,\mathrm{red}}$  and $G_{0,0}^{\mathrm{red}}$ is connected, the upper bound in \eqref{reduction dimensions} is a strict inequality if this closed subroup does not equal $G_{0,0}$. Therefore,  the upper bound in \eqref{reduction dimensions} is a strict inequality if $P^{0,\mathrm{red}}$ does not have a constant reduced modified symbol, and hence the reduction procedure reduces the dimension of the reduction obtained in every step unless it yields a reduction with constant reduced modified symbol in some step. The reductions obtained cannot, however, have dimension less than the lower bound in  \eqref{reduction dimensions}, so the procedure must at some step produce a reduction whose dimension is not less than the dimension of the reduction in the previous step, which implies that the reduction at that step has constant reduced modified symbol.

To ensure that the reduction procedure finally yields a reduction with a constant reduced modified symbol that is invariant under the involution $\iota$, it suffices simply at every step of the procedure to take only level sets of reduced modified symbols that are themselves invariant under $\iota$. This is indeed possible because it is shown in \cite{SYKES2023108850} that $\iota$ induces an involution on $P^0$, whose fixed point set $\Re P^0$ is manifold with real dimension equal to half the real dimension of $P^0$, and modified symbols at these points are invariant under the involution on $\mathfrak{csp}(\mathfrak{g}_{-1})$. 
Lastly, it is shown in \cite[Proposition 5.1 and Section 6]{SYKES2023108850} that if a reduction has constant reduced modified symbol then that reduced modified symbol is a subalgebra in $\mathfrak{g}_{-}\rtimes \mathfrak{csp}(\mathfrak{g}_{-1})$.
\end{proof}

Dual to the geometric reduction procedure for $P^0$ described above, there is an algebraic reduction procedure that can be applied to any modified symbol $\mathfrak{g}^{0,\mathrm{mod}}(\psi)$ associated to a point $\psi\in P^0$ that yields all of the possible modified symbols $\mathfrak{g}^{0,\mathrm{red}}(\psi)$ that the geometric reduction procedure can produce. For our purposes, since we will ultimately care only about reductions of the type in Lemma \ref{constant symbol reduction lemma}, it will suffice to describe this procedure only for $\psi\in \Re P^0$, which ensures that $\mathfrak{g}^{0,\mathrm{red}}(\psi)$ is invariant under $\iota$, so let us assume $\psi\in \Re P^0$. Consider the filtration
\begin{align}\label{algebraic red}
\mathfrak{g}^{0,\mathrm{mod}}(\psi)=V_0\supset V_1\supset\cdots\supset V_s
\end{align}
for some number $s$, where $V_{j+1}$ is the maximal subspace in $V_j$ such that $[V_{j+1},V_{j}]\subset V_{j}$,
\[
V_{j+1}:=V_{j}\cap N_{\mathfrak{csp}(\mathfrak{g}_{-1})}(V_{j})  
\quad\quad\forall \,j\in \mathbb{N}\cup\{0\}.
\]
where $N_{\mathfrak{csp}(\mathfrak{g}_{-1})}(V_{j})$ denotes the normalizer of $V_j$ in $\mathfrak{csp}(\mathfrak{g}_{-1})$.
For homogeneous structures, spaces of the form $V_s$ in \eqref{algebraic red} obtained for different choices of $\psi$ are exactly the possible reduced modified symbols associated with reductions of $P^0$. Furthermore, if $s$ is sufficiently large such that $V_{s}=V_{s+1}$ then $V_s$ will be a subalgebra of $\mathfrak{csp}(\mathfrak{g}_{-1})$ corresponding to a reduction of the type in Lemma \ref{constant symbol reduction lemma}.

As subspaces of $\mathfrak{g}_{-}\rtimes \mathfrak{csp}(\mathfrak{g}_{-1})$, reduced modified symbols share several properties, and we refer to all subspaces of $\mathfrak{g}_{-}\rtimes \mathfrak{csp}(\mathfrak{g}_{-1})$ having these properties as \emph{abstract reduced modified symbols} (or ARMS), defined as follows.
\begin{definition}\label{ARMS}
An \emph{abstract reduced modified symbol} (or ARMS) for $2$-nondegenerate hypersurface-type CR structures is a fixed choice of antilinear involution $\iota$ and decomposition of the Heisenberg algebra $\mathfrak{g}_{-}$ as in \eqref{heisenberg decomposition}, together with a subspace $\mathfrak{g}^{0,\mathrm{red}}$ of $\mathfrak{g}_{-}\rtimes \mathfrak{csp}(\mathfrak{g}_{-1})$ with the decomposition 
\begin{align}\label{modified symbol decomposition}
\mathfrak{g}^{0,\mathrm{red}}=\mathfrak{g}_{-}\oplus \mathfrak{g}_{0}^{\mathrm{red}},
\end{align}
where $\mathfrak{g}_{0}^{\mathrm{red}}$ is a subspace in $ \mathfrak{csp}(\mathfrak{g}_{-1})$ with a further decomposition
\begin{align}\label{modified symbol decomposition a}
\mathfrak{g}_{0}^{\mathrm{red}}=\mathfrak{g}_{0,-}^{\mathrm{red}}\oplus \mathfrak{g}_{0,0}^{\mathrm{red}}\oplus \mathfrak{g}_{0,+}^{\mathrm{red}}
\end{align}
satisfying (1) $\iota(\mathfrak{g}_{0}^{\mathrm{red}})=\mathfrak{g}_{0}^{\mathrm{red}}$, (2) $\iota(\mathfrak{g}_{0,-}^{\mathrm{red}})=\mathfrak{g}_{0,+}^{\mathrm{red}}$, (3) $[v,\mathfrak{g}_{-1,1}]\not\subset \mathfrak{g}_{-1,1}$ for all $ v\in \mathfrak{g}_{0,-}^{\mathrm{red}}$, (4) $[v,\mathfrak{g}_{-1,-1}]\subset \mathfrak{g}_{-1,-1} $ for all $ v\in \mathfrak{g}_{0,-}^{\mathrm{red}}$, and (5) $[v,\mathfrak{g}_{-1,i}] \subset \mathfrak{g}_{-1,i} $ for all $v\in \mathfrak{g}_{0,0}^{\mathrm{red}},\, i\in\{-1,1\}$.
\end{definition}

We stress that the decomposition in \eqref{modified symbol decomposition a} is not canonical, whereas the decomposition in  \eqref{modified symbol decomposition} is. The dimension $\dim(\mathfrak{g}_{0,+})$ is, however, independent of the splitting chosen in \eqref{modified symbol decomposition a}. Ultimately we will only care about ARMS that can be equivalent to the reduced modified symbols of the reduction in Lemma \ref{constant symbol reduction lemma}, so in the sequel we will only consider ARMS that are subalgebras of $\mathfrak{g}_{-}\rtimes \mathfrak{csp}(\mathfrak{g}_{-1})$, motivating the following terminology.\\

\begin{definition}\label{subalgebra property}We will say that an ARMS satisfies \emph{the subalgebra property} if it is a subalgebra of $\mathfrak{g}_{-}\rtimes \mathfrak{csp}(\mathfrak{g}_{-1})$.
\end{definition}

An ARMS as in Definition \ref{ARMS} satisfying the subalgebra property can be used to construct a $2$-nondegenerate CR manifold whose Levi kernel has rank $\dim(\mathfrak{g}_{0,+})$, which motivates the following definition.
\begin{definition}\label{ARMS type}
An abstract reduced modified symbol with a decomposition as in Definition \ref{ARMS} has \emph{Levi kernel dimension $r$} if $\dim(\mathfrak{g}_{0,+})=r$.
\end{definition}

We will now describe homogeneous CR manifolds generated by ARMS satisfying the subalgebra property. Let $\mathfrak{g}^{0,\mathrm{red}}$  be an ARMS satisfying the subalgebra property, and let us fix a decomposition of $\mathfrak{g}^{0,\mathrm{red}}$ as in \eqref{modified symbol decomposition} and \eqref{modified symbol decomposition a}. Define
\[
H:=\mathfrak{g}_{-1,1}\oplus \mathfrak{g}_{0,+}^{\mathrm{red}}\oplus \mathfrak{g}_{0,0}^{\mathrm{red}},
\]
let $G$ be the connected simply-connected Lie group of $\mathfrak{g}^{0,\mathrm{red}}$, let $G_{0,0}\subset M^{\mathbb{C}}$ be the subgroup generated by $\mathfrak{g}_{0,0}^{\mathrm{red}}$, let $\Re G$ be the subgroup generated by
\[
\Re(\mathfrak{g}^{0,\mathrm{red}}):=\{v\in \mathfrak{g}^{0,\mathrm{red}}\,|\, \iota(v)=v\},
\]
and let $\pi:G\to G/G_{0,0}$ be the canonical projection to the left-coset space. Letting $\vec{H}$ denote the left invariant distribution on $G$ generated by $H$, the distribution $\pi_*(\vec{H})$ defines a tangential Cauchy--Riemann bundle on $\pi(\Re G)$, defining a homogeneous CR manifold
\begin{align}\label{flat structure}
\left(\pi(\Re G), \pi_*(\vec{H})\right).
\end{align}
\begin{definition}\label{flat structure definition}
The \emph{flat structure (flat CR manifold or flat CR structure)} generated by an ARMS $\mathfrak{g}^{0,\mathrm{red}}$ satisfying the subalgebra property (Definition \ref{subalgebra property}) is the CR manifold (respectively CR manifold or CR structure) in \eqref{flat structure}.
\end{definition}
\begin{remark}\label{regular flat structures}
If a CR symbol is regular as in Definition \ref{CR symbol} then it satisfies the axioms of an ARMS given in Definition \ref{ARMS} as well as the subalgebra property (Definition \ref{subalgebra property}), and thus generates a flat structure as described above.
\end{remark}

The next two definitions address the circumstance that algebraically non-equivalent ARMS can generate the same CR structure, and we would like to sort the ARMS into equivalence classes based on the CR structures that they  generate.

\begin{definition}[completion and base]\label{ARMS completion}
Let $\mathfrak{g}^{0,\mathrm{red}}$  be an abstract reduced modified symbol with Levi kernel dimension $r$ and suppose furthermore that $\mathfrak{g}^{0,\mathrm{red}}$  is a subalgebra of  $\mathfrak{g}_{-}\rtimes \mathfrak{csp}(\mathfrak{g}_{-1})$. A \emph{completion}  (respectively \emph{base}) of $\mathfrak{g}^{0,\mathrm{red}}$  is a maximal (respectively \emph{minimal}) subalgebra of  $\mathfrak{g}_{-}\rtimes \mathfrak{csp}(\mathfrak{g}_{-1})$ containing $\mathfrak{g}^{0,\mathrm{red}}$ that is also an ARMS with Levi kernel dimension $r$.

We say that $\mathfrak{g}^{0,\mathrm{red}}$ is \emph{complete} if it equals its own completion.
\end{definition}

\begin{definition}\label{equivalence of ARMS}
Two ARMS satisfying the subalgebra property (Definition \ref{subalgebra property}) are \emph{equivalent} if they have algebraicaly equivalent completions (see Definition \ref{ARMS completion}), that is they have completions equipped with some decomposition as in \eqref{modified symbol decomposition} and \eqref{modified symbol decomposition a} between which there is a Lie algebra isomorphism preserving the decompositions and the indexing of their respective components (including the bi-grading of the Heisenberg part) and commuting with the involution $\iota$. 
\end{definition}
It is easily seen that an ARMS completion and an ARMS base both generate the same flat structure as the ARMS does, so two ARMS are equivalent if and only if they generate equivalent flat CR structures.

The classification that we derive here will consist of flat structures generated by ARMS, and it will rely heavily on theorems of \cite{porter2021absolute,SYKES2023108850}. Specifically, we will use the following theorems.
\begin{theorem}[{follows from \cite[Theorem 5.2 and section 6]{SYKES2023108850}}]\label{maximally symmetric regular models modified symbols}
If $(M,H)$  is a uniformly $2$-nondegenerate hypersurface-type CR manifold with a constant CR symbol $\mathfrak{g}^0$ and constant modified symbol on $P^0$ then its modified CR symbol coincides with $\mathfrak{g}^0$ and the CR symbol is regular. Conversely, if a CR symbol $\mathfrak{g}^0$ is regular then the flat structure generated by $\mathfrak{g}^0$ referred to in Remark \ref{regular flat structures} will have constant CR symbol and constant modified symbol on $P^0$ equivalent to $\mathfrak{g}^0$ itself.
\end{theorem}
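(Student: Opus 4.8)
The plan is to treat the two implications separately, in each case reducing the geometric statement to algebraic facts about the CR symbol that are already packaged in \cite[Theorem 5.2 and Section 6]{SYKES2023108850} together with the constructions recalled above. The central conceptual point I would exploit is that $\mathfrak{g}^{0,\mathrm{mod}}(\psi)$ is built from the Maurer--Cartan form as an infinitesimal record of how the adapted frames---and hence the placement of $\mathfrak{g}_{0,\pm2}(p)$ inside $\mathfrak{csp}(\mathfrak{g}_{-1}(p))$---vary as one moves within a leaf of the Levi-kernel foliation. Thus the modified symbol should degenerate to the algebraic CR symbol $\mathfrak{g}^0$ precisely when this variation carries no deformation.

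For the forward implication, assume $(M,H)$ has constant CR symbol $\mathfrak{g}^0$ and constant modified symbol on $P^0$. First I would observe that, by \eqref{orbit of symbols}, constancy of $\psi\mapsto\mathfrak{g}_0^{\mathrm{mod}}(\psi)$ forces $\mathfrak{g}_0^{\mathrm{mod}}=g^{-1}\circ\mathfrak{g}_0^{\mathrm{mod}}\circ g$ for every $g\in G_{0,0}$, so $\mathfrak{g}_0^{\mathrm{mod}}$ is normalized by $\mathfrak{g}_{0,0}$; this is the infinitesimal version of the orbit in \eqref{orbit of symbols invariant} collapsing to a single point. Next, since $\psi\mapsto\mathfrak{g}^{0,\mathrm{red}}(\psi)$ is then already constant on $P^0$ itself, the reduction procedure terminates with $P^{0,\mathrm{red}}=P^0$, and the result cited in the proof of Lemma \ref{constant symbol reduction lemma} shows that this constant modified symbol is a subalgebra of $\mathfrak{g}_{-}\rtimes\mathfrak{csp}(\mathfrak{g}_{-1})$. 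It remains to identify $\mathfrak{g}^{0,\mathrm{mod}}$ with $\mathfrak{g}^0$; here I would use that both share the Heisenberg part $\mathfrak{g}_{-}$ and the piece $\mathfrak{g}_{0,0}$ (the containment $\mathfrak{g}_{0,0}\subset\mathfrak{g}^{0,\mathrm{mod}}$ always holds), and that the deformation terms entering the Maurer--Cartan construction vanish exactly when the modified symbol is $G_{0,0}$-invariant of the expected dimension, forcing its graded pieces in degrees $\pm2$ to coincide with $\mathfrak{g}_{0,\pm2}$. Regularity of $\mathfrak{g}^0$ is then immediate from the subalgebra property just established.

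For the converse, I would start from the explicit flat structure \eqref{flat structure} generated by the regular symbol $\mathfrak{g}^0$ viewed as an ARMS (Remark \ref{regular flat structures}), and compute both symbols at the base coset directly from the Lie brackets of $\mathfrak{g}^0$. Because the distribution $\vec{H}$ is left invariant and $\mathfrak{g}^0$ is a subalgebra, the Levi form, the Levi kernel, and the operators $\mathrm{ad}_v$ of \eqref{Levi kernel adjoint map} are reproduced by the corresponding algebraic brackets in $\mathfrak{g}^0$, so the CR symbol at the base point is isomorphic to $\mathfrak{g}^0$, and left invariance propagates this to a constant CR symbol. For the modified symbol I would feed the canonical left-invariant adapted frames into the construction preceding Definition \ref{modified CR symbol}: since these frames are determined by the fixed algebra $\mathfrak{g}^0$, the fibers over $\mathcal{N}$ are mutual translates, so the embeddings $\Phi_\psi$ have the same Maurer--Cartan image at every $\psi$, yielding a constant $\mathfrak{g}_0^{\mathrm{mod}}$ which by the same matching of graded pieces equals the degree-zero part of $\mathfrak{g}^0$.

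The main obstacle I anticipate is the identification step common to both directions: proving that a constant modified symbol must equal $\mathfrak{g}^0$ rather than merely some other $G_{0,0}$-invariant subalgebra containing $\mathfrak{g}_{-}\oplus\mathfrak{g}_{0,0}$. This requires a careful analysis of the correction terms produced by the Maurer--Cartan construction and a verification that their vanishing is equivalent both to the constancy of the modified symbol and to the closure of $\mathfrak{g}_{0,-2}\oplus\mathfrak{g}_{0,0}\oplus\mathfrak{g}_{0,2}$ under brackets---precisely the content that \cite[Theorem 5.2]{SYKES2023108850} supplies, and which I would otherwise have to reconstruct by expanding the structure equations of $P^0$ in the adapted coframe.
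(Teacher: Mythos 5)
The paper gives no proof of this theorem at all---it is imported directly from \cite[Theorem 5.2 and Section 6]{SYKES2023108850}---and your proposal takes essentially the same route, since the one genuinely hard step in both directions (that constancy of the modified symbol forces its graded pieces to coincide with those of $\mathfrak{g}^0$, rather than with some other $G_{0,0}$-invariant subalgebra) is, as you yourself note, deferred to exactly that citation. Your supporting observations are correct as stated: the $G_{0,0}$-equivariance \eqref{orbit of symbols} gives normalization, constancy on $P^0$ makes the trivial reduction $P^{0,\mathrm{red}}=P^0$ so the subalgebra property follows from the result quoted in the proof of Lemma \ref{constant symbol reduction lemma}, regularity is then immediate, and for the converse the left-invariant frames on the flat structure \eqref{flat structure} reproduce both symbols algebraically.
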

In \cite{porter2021absolute}, a variation of Tanaka prolongation called a bi-graded prolongation is introduced and applied to obtain sharp upper bounds for the symmetry group dimension of uniformly $2$-nondegenerate hypersurface-type CR manifolds with regular CR symbols, which are referred to in the next theorem.
\begin{theorem}[corollary of {\cite[Theorem 3.2]{porter2021absolute}}]\label{maximally symmetric regular models}
If $(M,H)$ is a homogeneous $2$-nondegenerate hypersu\-rface\--type CR manifold having a regular CR symbol $\mathfrak{g}^0$ whose symmetry group's dimension attains the upper bound given in \cite[Theorem 3.1]{porter2021absolute}, then $(M,H)$ is locally equivalent to the flat structure generated by $\mathfrak{g}^0$ referred to in Remark \ref{regular flat structures}.
\end{theorem}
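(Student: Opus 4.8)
The plan is to obtain the statement as a short corollary of \cite[Theorem 3.2]{porter2021absolute}, once the flat structure of Remark \ref{regular flat structures} has been identified with the maximally symmetric model appearing there. First I would recall the mechanism behind the bound of \cite[Theorem 3.1]{porter2021absolute}: the symmetry algebra of any homogeneous $2$-nondegenerate CR manifold with a regular symbol $\mathfrak{g}^0$ carries a natural filtration and embeds, as a filtered (bi-graded) subalgebra, into the universal bi-graded Tanaka prolongation of $\mathfrak{g}^0$, and the upper bound of \cite[Theorem 3.1]{porter2021absolute} is precisely the dimension of this prolongation. Consequently, the hypothesis that the symmetry group dimension attains this bound forces the symmetry algebra of $(M,H)$ to be the \emph{entire} bi-graded prolongation of $\mathfrak{g}^0$.

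Next I would invoke \cite[Theorem 3.2]{porter2021absolute}, whose content is that this extremal case is realized, up to local equivalence, by a single homogeneous structure, namely the flat model attached to $\mathfrak{g}^0$, whose symmetry algebra is exactly the bi-graded prolongation (equivalently, whose associated Cartan-type curvature vanishes). Since $(M,H)$ has symmetry algebra equal to the full prolongation by the previous step, \cite[Theorem 3.2]{porter2021absolute} yields that $(M,H)$ is locally equivalent to that model.

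The remaining step, which I expect to be the main obstacle, is to verify that the model of \cite[Theorem 3.2]{porter2021absolute} coincides with the flat structure generated by $\mathfrak{g}^0$ in the sense of Remark \ref{regular flat structures}, since these are produced by a priori different constructions. Here I would apply Theorem \ref{maximally symmetric regular models modified symbols}: the flat structure generated by the regular symbol $\mathfrak{g}^0$ has constant CR symbol equal to $\mathfrak{g}^0$ together with constant modified symbol on $P^0$. Constancy of the modified symbol, with its coincidence with the regular CR symbol, is exactly the algebraic signature of the flat case, so the symmetry algebra of this generated flat structure is again the full bi-graded prolongation; in particular it too attains the bound of \cite[Theorem 3.1]{porter2021absolute}. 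By the uniqueness clause of \cite[Theorem 3.2]{porter2021absolute}, it is therefore locally equivalent to the model, and hence so is $(M,H)$, which is the desired conclusion.

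The delicate point is thus the identification in the last paragraph: one must know that the two model constructions agree not merely in symmetry dimension but as CR structures, which amounts to the equivalence, under the regularity hypothesis, between flatness of the canonical connection (i.e.\ constancy of the modified symbol) and maximality of the symmetry dimension. This equivalence is supplied precisely by combining Theorem \ref{maximally symmetric regular models modified symbols} with \cite[Theorem 3.2]{porter2021absolute}, so no independent curvature computation is required, and the argument closes.
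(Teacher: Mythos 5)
The paper offers no independent proof of this statement: it is presented as a direct corollary of \cite[Theorem 3.2]{porter2021absolute}, with the identification between the Porter--Zelenko model and the flat structure of Remark \ref{regular flat structures} taken as definitional, since that model is constructed in \cite{porter2021absolute} as precisely the left-invariant CR structure on the quotient $\pi(\Re G)$ built from the (regular) symbol algebra --- the same construction as Definition \ref{flat structure definition} applied with $\mathfrak{g}^{0,\mathrm{red}}=\mathfrak{g}^0$. Your first two paragraphs (maximality forces the symmetry algebra to be the full bi-graded prolongation; the uniqueness clause of \cite[Theorem 3.2]{porter2021absolute} then gives local equivalence to the model) are consistent with this reading.

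The genuine gap is in your handling of what you correctly flag as the delicate point. You argue that because the flat structure generated by $\mathfrak{g}^0$ has constant modified symbol on $P^0$ coinciding with $\mathfrak{g}^0$ (Theorem \ref{maximally symmetric regular models modified symbols}), ``the symmetry algebra of this generated flat structure is again the full bi-graded prolongation.'' That implication is not licensed by anything in the paper and is false as a general principle: constancy of the modified symbol is a necessary condition for flatness, not a sufficient one, because the modified symbol only records degree-zero algebraic data and says nothing about higher-order curvature of the parallelism. Theorem \ref{maximally symmetric regular models modified symbols} asserts only that constancy forces the modified symbol to equal the regular symbol, not that it forces maximal symmetry; indeed Lemma \ref{constant symbol reduction lemma} shows every homogeneous structure admits reductions with constant reduced modified symbol, while the non-model homogeneous structures with prescribed symbols (e.g.\ those of \cite{kolar2023new}) show that symbol-level constancy cannot detect maximality of the symmetry dimension. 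The correct way to close the argument is not via Theorem \ref{maximally symmetric regular models modified symbols} at all: one uses that \cite[Theorem 3.2]{porter2021absolute} itself contains the sharpness clause --- its model attains the bound and \emph{is}, by construction, the flat structure of Remark \ref{regular flat structures} --- so the identification requires no intermediary. As written, your third paragraph substitutes an invalid inference for this definitional identification, and the proof does not close without repairing it.
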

In \cite{SYKES2023108850}, a correspondence between CR structures and dynamical Legendrian contact structures is used to obtain sharp upper bounds for the symmetry group dimension of the uniformly $2$-nondegenerate hypersurface-type CR manifolds referred to in \cite{SYKES2023108850} as being \emph{recoverable}, a property meaning that the CR structure is uniquely determined by its corresponding dynamical Legendrian contact structure. We refer the reader  to \cite{SYKES2023108850} for a thorough description and study of this property, but for the purposes of this text we need only to know the following lemma.
\begin{lemma}\label{recoverability}
If $(M,H)$ is a homogeneous $2$-nondegenerate hypersurface-type CR manifold with a rank $1$ Levi kernel and the $\mathfrak{g}_{0,2}(p)$ component of the CR symbol (at an arbitrary point $p\in M$) is spanned by an operator in $\mathfrak{csp}\big(\mathfrak{g}_{-1}(p)\big)$ whose rank is greater than $1$, then $(M,H)$ is recoverable in the sense of \cite{SYKES2023108850}. In particular if, for such $(M,H)$, the CR symbol is non-regular then $(M,H)$ is recoverable.
\end{lemma}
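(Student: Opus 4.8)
The plan is to reduce the statement to the recoverability criterion established in \cite{SYKES2023108850} and then to verify that criterion under the stated rank hypothesis. First I would rephrase the hypothesis purely in terms of the antilinear operator $\mathrm{ad}_v$. Since the Levi kernel has rank $1$, a generator $v$ of $K_p$ determines a spanning element $\widetilde{\mathrm{ad}}_v$ of the one-dimensional space $\mathfrak{g}_{0,2}(p)$; because $\widetilde{\mathrm{ad}}_v$ vanishes on $\mathfrak{g}_{-1,1}(p)$ and equals $\mathrm{ad}_v$ composed with conjugation on $\mathfrak{g}_{-1,-1}(p)$, its rank as an element of $\mathfrak{csp}(\mathfrak{g}_{-1}(p))$ coincides with the rank of $\mathrm{ad}_v$ as an operator on $H_p/K_p$. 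Thus the hypothesis is precisely $\mathrm{rank}(\mathrm{ad}_v)>1$, and the whole question becomes a pointwise one about this single operator.

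Next I would unwind what it means for the CR structure to be recovered from its dynamical Legendrian contact (DLC) structure. The leaf space $\mathcal{N}$ of the foliation by $(K\oplus\overline{K})\cap TM$ carries the contact and Legendrian data descending from the nondegenerate form $\ell_p$, while the fibers of $\pi\colon M\to\mathcal{N}$ should be recovered as orbits of the dynamics determined by $\widetilde{\mathrm{ad}}_v$. The central point is that this reconstruction is unique exactly when distinct points of a fiber are distinguished by the Legendrian subspaces they determine under the flow generated by $\widetilde{\mathrm{ad}}_v$. I would show that $\mathrm{rank}(\mathrm{ad}_v)>1$ guarantees this: a higher-rank operator forces the induced family of Legendrian subspaces to trace an immersed curve that separates the points of each fiber, so that no two inequivalent CR structures can induce the same DLC structure. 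Combining this separation property with the recoverability criterion of \cite{SYKES2023108850} yields the main assertion.

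Finally, for the \emph{in particular} clause I would verify that, for a rank-one Levi kernel, $\mathrm{rank}(\mathrm{ad}_v)=1$ already forces the CR symbol to be regular; the corollary is then the contrapositive (a non-regular symbol must satisfy $\mathrm{rank}(\mathrm{ad}_v)>1$) combined with the main assertion, since by $2$-nondegeneracy $\mathrm{ad}_v\neq 0$ and hence always has rank at least one. Regularity of $\mathfrak{g}^0(p)$ amounts to closure under bracket, and since $\mathfrak{g}_{0,\pm2}(p)$ are one-dimensional the brackets within each of them vanish and $[\mathfrak{g}_{0,0}(p),\mathfrak{g}_{0,\pm2}(p)]\subset\mathfrak{g}_{0,\pm2}(p)$ holds by the definition of $\mathfrak{g}_{0,0}(p)$; the only condition requiring attention is that $[[\widetilde{\mathrm{ad}}_v,\iota(\widetilde{\mathrm{ad}}_v)],\widetilde{\mathrm{ad}}_v]$ be a multiple of $\widetilde{\mathrm{ad}}_v$, which I expect to reduce to a short linear-algebra computation valid whenever $\mathrm{ad}_v$ has rank one.

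I expect the main obstacle to be the middle step: pinning down, from the precise definition of the DLC structure and its recovery map in \cite{SYKES2023108850}, the exact sense in which $\mathrm{rank}(\mathrm{ad}_v)>1$ makes the induced family of Legendrian subspaces separate fiber points. Translating the intuition that ``the flow moves the Legendrian data enough'' into the formal injectivity required by the recoverability criterion is where the real care lies; by comparison, the rank translation of the first paragraph and the regularity computation of the last are routine.
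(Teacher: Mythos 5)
Your first and last paragraphs are fine: identifying the hypothesis with $\mathrm{rank}(\mathrm{ad}_v)>1$ is immediate from the definition of $\widetilde{\mathrm{ad}}_v$, and the \emph{in particular} clause does reduce, via the characterization of regularity the paper records in Lemma \ref{regular CR symbol} (with $r=1$: regularity iff $A\overline{A}A\in\mathrm{span}\{A\}$), to the one-line computation that a rank-one matrix $A=uw^{T}$ satisfies $A\overline{A}A=\left|w^{T}\overline{u}\right|^{2}A$; so rank one forces regularity, and non-regularity forces rank $>1$. The genuine gap is your middle step, and it is not merely an omitted verification: the mechanism you propose is wrong. You argue that $\mathrm{rank}(\mathrm{ad}_v)>1$ makes the curve of Legendrian subspaces along a fiber immersed, so that it ``separates the points of each fiber,'' and that this separation yields recoverability. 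But immersedness and local injectivity of that curve only require the velocity operator to be nonzero, i.e.\ $\mathrm{ad}_v\neq 0$, which holds for \emph{every} $2$-nondegenerate structure by the injectivity of $v\mapsto\mathrm{ad}_v$ — including all rank-one cases. If your separation criterion implied recoverability, then every homogeneous $2$-nondegenerate hypersurface with rank-one Levi kernel would be recoverable, which is false: the paper's type \Rmnum{3} structure has $A=\mathrm{diag}(1,0)$ of rank one and is explicitly not recoverable (see Section \ref{main theorem proof subsection c}, where Theorem \ref{maximally symmetric recoverable models} is stated not to apply to type \Rmnum{3} for exactly this reason, and where its DLC symmetry algebra $\mathfrak{g}_2$ strictly exceeds its $9$-dimensional CR symmetry algebra). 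So the intuition guiding your key step proves, as stated, a false statement.

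What recoverability actually requires is not that $M$ embed into the bundle of Legendrian flags — the fibration and the dynamics are part of the DLC data, not something to be reconstructed — but that the subbundle $H\subset\mathbb{C}TM$ be \emph{uniquely} reconstructible from that data: one must recover the correct $n$-dimensional lift of $\pi_*H_p$ containing $K_p$, equivalently the $\ell_p$-self-adjoint antilinear operators $\mathrm{ad}_v$ themselves rather than merely the unparametrized curve of subspaces they sweep out. The role of the rank hypothesis is to kill the residual pointwise linear-algebra ambiguity in this reconstruction, which is nontrivial precisely when the spanning operator of $\mathfrak{g}_{0,2}(p)$ has rank one; that is the content one must import from \cite{SYKES2023108850}. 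Note also that the paper itself gives no proof of this lemma — it is quoted as a consequence of \cite{SYKES2023108850} — so a self-contained write-up would have to invoke the precise recoverability criterion proved there or reprove it; your plan defers exactly the step that carries all the content, and fills it with a heuristic that the paper's own type \Rmnum{3} example refutes.
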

These recoverable structures are the topic of the next theorem.
\begin{theorem}[corollary of {\cite[Theorem 6.2]{SYKES2023108850}}]\label{maximally symmetric recoverable models}
Let $\mathfrak{g}^{0,\mathrm{red}}$ be a fixed ARMS satisfying the subalgebra property (Definition \ref{subalgebra property}). If $(M,H)$ is a homogeneous $2$-nondegenerate hypersurface-type recoverable CR manifold whose adapted (partial) frame bundle has a reduction with a constant reduced modified symbol equivalent to $\mathfrak{g}^{0,\mathrm{red}}$ and whose symmetry group dimension attains the upper bound given in \cite[Theorem 6.2]{SYKES2023108850}, then $(M,H)$ is locally equivalent to the flat structure generated by $\mathfrak{g}^{0,\mathrm{red}}$. Explicitly, the given upper bound is the complex dimension of the universal  Tanaka prolongation of $\mathfrak{g}^{0,\mathrm{red}}$, and this bound is sharp.
\end{theorem}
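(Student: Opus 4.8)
The plan is to obtain the statement as a direct transcription of \cite[Theorem 6.2]{SYKES2023108850} once its hypotheses are matched to those available here. The essential bridge is recoverability: by Lemma \ref{recoverability} the CR structure $(M,H)$ is uniquely determined by its associated dynamical Legendrian contact (DLC) structure, and this correspondence intertwines local symmetries. First I would spell out the consequence that every infinitesimal CR symmetry of $(M,H)$ corresponds bijectively to an infinitesimal symmetry of the DLC structure, so that the two symmetry algebras are isomorphic; in particular, bounding the symmetry dimension of $(M,H)$ is equivalent to bounding that of its DLC structure, which is the object studied in \cite[Theorem 6.2]{SYKES2023108850}.

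Next I would identify the algebraic input to the prolongation. The reduction hypothesis, together with Lemma \ref{constant symbol reduction lemma} and Definition \ref{reduced modified CR symbol}, supplies a constant, $\iota$-invariant reduced modified symbol equivalent to $\mathfrak{g}^{0,\mathrm{red}}$, which is a subalgebra of $\mathfrak{g}_{-}\rtimes\mathfrak{csp}(\mathfrak{g}_{-1})$. This subalgebra is precisely the graded datum whose universal Tanaka prolongation governs the DLC symmetry dimension in \cite[Theorem 6.2]{SYKES2023108850}, which gives the sharp upper bound equal to the complex dimension of that prolongation. The remaining step is the standard rigidity of Tanaka theory: among structures with the prescribed symbol, the full prolongation dimension is attained only by the flat (maximally symmetric) model, unique up to local equivalence. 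Hence, since $(M,H)$ attains the bound, it must be locally equivalent to the flat structure generated by $\mathfrak{g}^{0,\mathrm{red}}$ in the sense of Definition \ref{flat structure definition}.

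Sharpness is asserted in \cite[Theorem 6.2]{SYKES2023108850} and is witnessed by the flat structure of \eqref{flat structure}, whose symmetry algebra realizes the full prolongation. The hard part will not be the Tanaka rigidity, which is classical, but rather the careful bookkeeping across the CR/DLC correspondence: one must verify that recoverability faithfully identifies the two symmetry algebras, so that attaining the DLC bound is equivalent to attaining the CR bound, and that the abstractly-defined flat structure of Definition \ref{flat structure definition} coincides, up to local equivalence, with the maximally symmetric DLC model produced by prolongation. Once these identifications are secured, the theorem is immediate from \cite[Theorem 6.2]{SYKES2023108850}.
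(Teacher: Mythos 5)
Your proposal is correct and follows essentially the same route as the paper, which states this result as a direct corollary of \cite[Theorem 6.2]{SYKES2023108850} without a separate proof: recoverability (as in Lemma \ref{recoverability}) identifies the CR symmetry algebra with that of the associated DLC structure, the constant reduced modified symbol supplied by the reduction hypothesis is precisely the algebraic datum whose universal Tanaka prolongation the cited theorem bounds, and the rigidity (uniqueness of the maximally symmetric model) is part of the cited theorem itself rather than a separately needed classical Tanaka argument. The only bookkeeping point you flag but do not resolve --- that the real dimension of the CR symmetry group is compared with the \emph{complex} dimension of the prolongation, via the real form of the complexified picture --- is handled in \cite{SYKES2023108850} and is consistent with how the paper later uses the same identifications in the proof of Lemma \ref{g2 structure theorem}.
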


From Theorems \ref{maximally symmetric regular models} and \ref{maximally symmetric recoverable models}, we see that ARMS generate the maximally symmetric homogeneous models for the structures that are either recoverable or have a constant modified symbol on $P^0$. We will see that all but one of the maximally symmetric $2$-nondegenerate hypersurfaces in $\mathbb{C}^4$ satisfy at least one of these last two properties, so it is from these theorems and analysis of ARMS that we obtain most of the classification in this paper's main result Theorem \ref{ch3 main 7d theorem a}.

\subsection{Matrix representations of ARMS}\label{Matrix representations of ARMS}

An efficient way to describe ARMS is through the following matrix representations. These are described in \cite{SYKES2023108850}, but we give here a minimal review of the topic so that this paper is self-contained. Let $\mathfrak{g}^{0,\mathrm{red}}$  be an abstract reduced modified symbol with Levi kernel dimension $r$ satisfying the subalgebra property (Definition \ref{subalgebra property}), and fix a basis
\begin{align}\label{Heisenberg basis}
(e_0,e_1,\ldots, e_{2n-2r})
\end{align}
of $\mathfrak{g}_{-}$ satisfying
\begin{align}\label{Heisenberg component bases}
\mathfrak{g}_{-2}=\mathrm{span}\{e_0\},\quad \mathfrak{g}_{-1,-1}=\mathrm{span}\{e_1,\ldots, e_{n-r}\}, \quad \iota(e_0)=e_0,
\end{align}
and $\iota(e_j)=e_{j+n-r}$ for all $j\in\{1,\ldots, n-r\}$.
Let $H_\ell$ be the Hermitian matrix satisfying $[e_j,e_{n-r+k}]=i(H_\ell)_{j,k} e_0$ for all $ j,k\in\{1,\ldots, n-r\}$, and fix a decomposition of $\mathfrak{g}^{0,\mathrm{red}}$  as in \eqref{modified symbol decomposition a}. 
There exists a set of $(n-r)\times(n- r)$ matrices $A_{1},\ldots, A_{r},\Omega_{1},\ldots, \Omega_{r}$ such that, regarding $\mathfrak{g}_{0}^{\mathrm{red}}$ as a space of endomorphisms of $\mathfrak{g}_{-1}$ represented by matrices with respect to the basis $(e_1,\ldots, e_{2n-2r})$, we can identify
\begin{align}\label{g0plus}
\mathfrak{g}_{0,+}^{\mathrm{red}}=
\mathrm{span}_{\mathbb{C}}\left\{
\left.
\left(\begin{array}{cc}
\Omega_i & A_i \\
0 & -{H_\ell}^{-1}\Omega_i^{T} H_\ell
\end{array}\right)
\,\right|\,
i\in\{1,\ldots, n-r\}
\right\}.
\end{align}
Accordingly, applying \eqref{extending the involution}, one obtains
\begin{align}\label{g0minus}
\mathfrak{g}_{0,-}^{\mathrm{red}}=
\mathrm{span}_{\mathbb{C}}\left\{
\left.
\left(\begin{array}{cc}
 -\overline{H_\ell}^{-1}\Omega_i^{*} \overline{H_\ell}& 0 \\
\overline{A_i} & \overline{\Omega_i} 
\end{array}\right)
\,\right|\,
i\in\{1,\ldots, n-r\}
\right\}.
\end{align}
To describe $\mathfrak{g}_{0,0}^{\mathrm{red}}$, there is some space $\mathscr{A}_0$ of $(n-r)\times(n-r)$ matrices such that 
\begin{align}\label{g00}
X
\subset \mathfrak{g}_{0,0}^{\mathrm{red}} \subset
X\cup \mathrm{span}_{\mathbb{C}}\{I\}
\quad\mbox{ where }\quad
X=\left\{
\left.
\left(\begin{array}{cc}
\alpha& 0 \\
0& -{H_\ell}^{-1}\alpha^{T} H_\ell
\end{array}\right)
\right|
\alpha\in\mathscr{A}_0
\right\}.
\end{align}
The space $\mathscr{A}_0$ is indeed uniquely determined by this last property. To describe it more explicitly, consider the Lie algebras of $(n-r)\times(n-r)$ matrices $\alpha$ satisfying
\begin{align}\label{firstalgebra}
\alpha A_iH_\ell ^{-1} +   A_iH_\ell ^{-1}\alpha^T \in \mathrm{span}\{A_jH_\ell ^{-1}\}_{j=1}^{r} \quad\forall\,i\in\{1,\ldots,\mathrm r\}
\end{align}
and
\begin{align}\label{secondalgebra}
  \alpha^TH_\ell \overline{A_i}+H_\ell \overline{A_i} \alpha\in \mathrm{span}\{H_\ell \overline{A_j}\}_{j=1}^{r}\quad\forall\,i\in\{1,\ldots,r\},
\end{align}
respectively, and define the algebra $\mathscr{A}$ to be their intersection, that is,
\begin{align}\label{intersection algebra}
\mathscr{A}:=\left\{\alpha\,\left| \,\mbox{$\alpha$ satisfies \eqref{firstalgebra} and \eqref{secondalgebra}}\right.\right\}.
\end{align}
The space $\mathscr{A}_{0}$ is a subspace of  $\mathscr{A}$. For our purposes, one can always assume that $I\in  \mathfrak{g}_{0,0}^{\mathrm{red}}$ because it does not change the flat CR structure generated by  $\mathfrak{g}^{0,\mathrm{red}}$. We summarize this representation with the following lemma.
\begin{lemma}\label{matrix representation lemma}
Each ARMS (even without imposing the subalgebra property of Definition \ref{subalgebra property}) is determined by a tuple $(H_\ell,A_{1},\ldots, A_{r},\Omega_{1},\ldots, \Omega_{r},\mathscr{A}_0)$ consisting of $2r+1$ $(n-r)\times (n-r)$ matrices and a subspace of the vector space $\mathscr{A}$ defined in \eqref{intersection algebra}.
\end{lemma}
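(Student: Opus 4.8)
The statement is in essence a repackaging of the block-matrix normal forms \eqref{g0plus}, \eqref{g0minus}, and \eqref{g00} obtained above, so the plan is to verify that each graded summand of an ARMS is pinned down by the listed matrix data and that, conversely, the data reconstruct the ARMS. First I would fix the adapted basis \eqref{Heisenberg basis}--\eqref{Heisenberg component bases} once and for all. This identifies $\mathfrak{g}_{-1}$ with $\mathbb{C}^{2(n-r)}$ split as $\mathfrak{g}_{-1,-1}\oplus\mathfrak{g}_{-1,1}$ and realizes $\mathfrak{csp}(\mathfrak{g}_{-1})$ as an algebra of $2(n-r)\times 2(n-r)$ block matrices. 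Since the only nonzero Heisenberg brackets are those between $\mathfrak{g}_{-1,-1}$ and $\mathfrak{g}_{-1,1}$, recorded by $[e_j,e_{n-r+k}]=i(H_\ell)_{j,k}e_0$, the entire bracket structure of $\mathfrak{g}_-$---and hence the conformal symplectic form on $\mathfrak{g}_{-1}$ that it induces---is encoded by the single Hermitian matrix $H_\ell$. This disposes of the canonical summand $\mathfrak{g}_-$ and fixes the ambient $\mathfrak{csp}(\mathfrak{g}_{-1})$ in which the remaining pieces live.

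Next I would read the block shape of each summand of $\mathfrak{g}_0^{\mathrm{red}}$ directly off the axioms of Definition \ref{ARMS}. Axioms (2)--(4) force each of $\mathfrak{g}_{0,\pm}^{\mathrm{red}}$ to preserve one of the two summands $\mathfrak{g}_{-1,-1},\mathfrak{g}_{-1,1}$ and hence to be block triangular with respect to the splitting; imposing membership in $\mathfrak{csp}(\mathfrak{g}_{-1})$ then ties the two diagonal blocks together through $H_\ell$, so that an element of $\mathfrak{g}_{0,+}^{\mathrm{red}}$ is recorded by its diagonal block $\Omega$ and its off-diagonal block $A$, with the opposite diagonal block forced to equal $-H_\ell^{-1}\Omega^T H_\ell$. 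Choosing a basis of the $r=\dim(\mathfrak{g}_{0,+})$-dimensional space (so that $r$ is the Levi kernel dimension of Definition \ref{ARMS type}) yields the pairs $(\Omega_i,A_i)_{i=1}^{r}$ of \eqref{g0plus}. Applying the involution \eqref{extending the involution} to this basis and invoking axiom (2), $\iota(\mathfrak{g}_{0,-}^{\mathrm{red}})=\mathfrak{g}_{0,+}^{\mathrm{red}}$, produces \eqref{g0minus} verbatim and introduces no new parameters beyond $H_\ell$ and the $(\Omega_i,A_i)$. Finally axiom (5) forces $\mathfrak{g}_{0,0}^{\mathrm{red}}$ to be block diagonal, and again the $\mathfrak{csp}$ condition determines its lower block from its upper block $\alpha$; this gives the normal form \eqref{g00}, and the uniqueness of the resulting upper-block space $\mathscr{A}_0$ is immediate from that inclusion.

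The one substantive point---and the step I expect to be the main obstacle---is the assertion that $\mathscr{A}_0$ lies inside the algebra $\mathscr{A}$ of \eqref{intersection algebra}. I would establish this by a direct commutator computation: for $\alpha\in\mathscr{A}_0$, bracketing the associated block-diagonal element of $\mathfrak{g}_{0,0}^{\mathrm{red}}$ with the generator \eqref{g0plus} produces an element whose off-diagonal block is $\alpha A_i + A_i H_\ell^{-1}\alpha^T H_\ell$, which after right multiplication by $H_\ell^{-1}$ is exactly the left-hand side of \eqref{firstalgebra}. Requiring this bracket to again lie in $\mathfrak{g}_{0,+}^{\mathrm{red}}$---equivalently, that its off-diagonal block lie in $\mathrm{span}\{A_j\}$---reproduces \eqref{firstalgebra}, and the $\iota$-conjugate computation against \eqref{g0minus} reproduces \eqref{secondalgebra}, whence $\alpha\in\mathscr{A}$. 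The delicate aspect is that the normalization $[\mathfrak{g}_{0,0}^{\mathrm{red}},\mathfrak{g}_{0,\pm}^{\mathrm{red}}]\subseteq\mathfrak{g}_{0,\pm}^{\mathrm{red}}$ used here is strictly weaker than the full subalgebra property of Definition \ref{subalgebra property}---which is precisely why the conclusion holds \emph{even without} that property---so rather than invoking closure of all of $\mathfrak{g}_0^{\mathrm{red}}$ I would derive it from the way $\mathfrak{g}_{0,0}^{\mathrm{red}}$ arises, namely as (a subspace of) the grading-preserving normalizer $\mathfrak{g}_{0,0}$, which by construction sends $\mathfrak{g}_{0,\pm}^{\mathrm{red}}$ into itself.

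With $\mathscr{A}_0\subseteq\mathscr{A}$ in hand, the proof concludes by assembling the pieces: the tuple $(H_\ell,A_1,\ldots,A_r,\Omega_1,\ldots,\Omega_r,\mathscr{A}_0)$ reconstructs every summand of $\mathfrak{g}^{0,\mathrm{red}}=\mathfrak{g}_-\oplus\mathfrak{g}_{0,-}^{\mathrm{red}}\oplus\mathfrak{g}_{0,0}^{\mathrm{red}}\oplus\mathfrak{g}_{0,+}^{\mathrm{red}}$ via \eqref{g0plus}--\eqref{g00} and the Heisenberg brackets determined by $H_\ell$, while conversely each entry of the tuple is read off from the ARMS once the basis \eqref{Heisenberg basis} is fixed. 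This gives the claimed correspondence and proves the lemma.
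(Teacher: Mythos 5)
Your computational core coincides with what the paper actually does: the paper gives no separate proof of Lemma \ref{matrix representation lemma}, presenting it as a summary of the block-matrix construction immediately preceding it in Section \ref{Matrix representations of ARMS}, and your reading of the block shapes \eqref{g0plus}--\eqref{g00} off axioms (1)--(5) of Definition \ref{ARMS}, the recovery of $\mathfrak{g}_{0,-}^{\mathrm{red}}$ from $\mathfrak{g}_{0,+}^{\mathrm{red}}$ via \eqref{extending the involution}, and the commutator computation showing that bracketing a block-diagonal element against \eqref{g0plus} produces the left-hand side of \eqref{firstalgebra} (and, after conjugation by $\iota$, of \eqref{secondalgebra}) is exactly that construction made explicit. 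One small imprecision: membership in $\mathfrak{csp}(\mathfrak{g}_{-1})$ determines the lower block of a block-diagonal element from its upper block only up to a multiple of the identity (the conformal direction), which is why \eqref{g00} is a sandwich $X\subset\mathfrak{g}_{0,0}^{\mathrm{red}}\subset X+\mathrm{span}_{\mathbb{C}}\{I\}$ rather than an equality; your uniqueness claim for $\mathscr{A}_0$ should be routed through this decomposition, but this is cosmetic.

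The step that does not go through as written is the one you yourself flagged as the main obstacle: the containment $\mathscr{A}_0\subseteq\mathscr{A}$ \emph{without} the subalgebra property. You derive it from the claim that $\mathfrak{g}_{0,0}^{\mathrm{red}}$ arises as a subspace of the grading-preserving normalizer $\mathfrak{g}_{0,0}$, which ``by construction sends $\mathfrak{g}_{0,\pm}^{\mathrm{red}}$ into itself.'' But Definition \ref{ARMS} imposes no normalizer condition on $\mathfrak{g}_{0,0}^{\mathrm{red}}$ whatsoever --- its only constraint is axiom (5), i.e.\ block-diagonality --- so an abstract ARMS can pair an arbitrary $\iota$-compatible block-diagonal space with the $A_i$, and nothing in axioms (1)--(5) forces \eqref{firstalgebra} or \eqref{secondalgebra}. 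Moreover, the space $\mathfrak{g}_{0,0}$ of Definition \ref{CR symbol} normalizes the CR-symbol components $\mathfrak{g}_{0,\pm2}$ (the blocks carrying only the $A_i$), not the modified components $\mathfrak{g}_{0,\pm}^{\mathrm{red}}$ carrying the $\Omega_i$, and the inclusion $\mathfrak{g}_{0,0}^{\mathrm{red}}\subseteq\mathfrak{g}_{0,0}$ is likewise not an axiom, so the provenance argument is importing geometric facts unavailable at the abstract level. Your bracket computation does prove the containment once one knows $[\mathfrak{g}_{0,0}^{\mathrm{red}},\mathfrak{g}_{0,\pm}^{\mathrm{red}}]\subseteq\mathfrak{g}_{0}^{\mathrm{red}}$, which is how the paper's surrounding text (written under the standing assumption that the ARMS satisfies the subalgebra property of Definition \ref{subalgebra property}) justifies the assertion that $\mathscr{A}_0\subseteq\mathscr{A}$; note also that Lemma \ref{system of conditions for homogeneity lemma} lists ``$\mathscr{A}_0$ is a subalgebra of $\mathscr{A}$'' as part of the \emph{characterization} of the subalgebra property, consistent with the containment not being automatic. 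The ``determined by a tuple'' content of the lemma survives regardless, since the block forms need only axioms (1)--(5) and $\mathscr{A}_0$ can simply be recorded as a space of $(n-r)\times(n-r)$ matrices; but you should either restrict the containment clause to ARMS satisfying the subalgebra property (or the weaker normalizer condition) as an explicit hypothesis, rather than deriving it from how reduced modified symbols arise geometrically.
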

This matrix representation of ARMS describes a general ARMS satisfying the axioms in Definition \ref{ARMS}. A matrix representation of an ARMS satisfying the subalgebra property (Definition \ref{subalgebra property}) has the additional properties described in the following lemma.

\begin{lemma}[compare to {\cite[Proposition 5.4]{SYKES2023108850}}]\label{system of conditions for homogeneity lemma}
The ARMS $\mathfrak g^{0, \mathrm{red}}$ is a Lie subalgebra of $\mathfrak g_-\rtimes \mathfrak{csp}(\mathfrak{g}_{-1})$ if and only if $\mathscr{A}_0$ is a subalgebra of $\mathscr{A}$ and
there exist coefficients $\eta_{\alpha,i}^s\in\mathbb{C} $ and $\mu_{i,j}^s\in\mathbb{C}$ indexed by $\alpha\in\mathscr{A}_0$ and $i,j,s\in\{1,\ldots,\mathrm{rank}\,K\}$ such that the  system of relations
\begin{align}\label{system}
\left\{\mbox{
\begin{minipage}{.8\textwidth} 
(i)\quad\quad$\displaystyle \,\,  \alpha A_iH_\ell ^{-1} +   A_iH_\ell ^{-1}\alpha^T  = \sum_{s=1}^{r}\eta_{\alpha,i}^s A_sH_\ell^{-1}$\\
(ii)\quad\quad$\displaystyle  [\alpha,\Omega_i]-\sum_{s=1}^{r}\eta_{\alpha,i}^s\Omega_s\in\mathscr{A}_0 $\\
(iii)\quad\quad$\displaystyle \Omega_j^TH_\ell\overline{A_i}+H_\ell\overline{A_i} \Omega_j=\sum_{s=1}^{r} \mu_{i,j}^s H_\ell\overline{A_s} $\\
(iv)\quad\quad$\displaystyle \left[\overline{H_\ell^{-1}\Omega_i^T H_\ell},\Omega_j \right]+A_j\overline{A_i}-\sum_{s=1}^{r}\left(\overline{\mu_{i,j}^s}\Omega_s +\mu_{j,i}^s \overline{H_\ell^{-1}\Omega_s^T H_\ell}\right) \in\mathscr{A}_0 $\\
\end{minipage}
}
\right.
\end{align}
holds for all $ \alpha\in\mathscr{A}_0$ and $i,j\in\{1,\ldots,\mathrm{rank}\,K\}$. 
\end{lemma}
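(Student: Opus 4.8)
The plan is to reduce the claim to a statement purely about $\mathfrak{g}_0^{\mathrm{red}}\subset\mathfrak{csp}(\mathfrak{g}_{-1})$ and then verify it by a direct bracket computation in the matrix representation \eqref{g0plus}--\eqref{g00}. Since $\mathfrak{g}_{-}$ is a subalgebra and every element of $\mathfrak{g}_0^{\mathrm{red}}\subset\mathfrak{csp}(\mathfrak{g}_{-1})$ acts on $\mathfrak{g}_{-}$ as a derivation, the brackets $[\mathfrak{g}_0^{\mathrm{red}},\mathfrak{g}_{-}]$ automatically land in $\mathfrak{g}_{-}\subset\mathfrak{g}^{0,\mathrm{red}}$; hence $\mathfrak{g}^{0,\mathrm{red}}$ satisfies the subalgebra property (Definition \ref{subalgebra property}) if and only if $[\mathfrak{g}_0^{\mathrm{red}},\mathfrak{g}_0^{\mathrm{red}}]\subset\mathfrak{g}_0^{\mathrm{red}}$. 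I would then fix the Lagrangian splitting $\mathfrak{g}_{-1}=\mathfrak{g}_{-1,1}\oplus\mathfrak{g}_{-1,-1}$ and use the associated contact grading $\mathfrak{csp}(\mathfrak{g}_{-1})=\mathfrak{l}_{-1}\oplus\mathfrak{l}_0\oplus\mathfrak{l}_{+1}$ into lower-left, block-diagonal, and upper-right blocks. In these terms the generators $X_i^+$ of \eqref{g0plus} decompose as $X_i^+=R_i+D_i$ with $R_i\in\mathfrak{l}_{+1}$ carrying $A_i$ and $D_i\in\mathfrak{l}_0$ carrying $\Omega_i$, with $X_i^-=\iota(X_i^+)$ from \eqref{g0minus} and $\mathfrak{g}_{0,0}^{\mathrm{red}}\subset\mathfrak{l}_0$ carrying $\mathscr{A}_0$. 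The key bookkeeping fact is the membership criterion: an element $M$ lies in $\mathfrak{g}_0^{\mathrm{red}}$ iff its $\mathfrak{l}_{+1}$- and $\mathfrak{l}_{-1}$-components lie in $\mathrm{span}\{R_s\}$ and $\mathrm{span}\{\iota(R_s)\}$, and its $\mathfrak{l}_0$-component, after subtracting the matching $D_s$- and $\iota(D_s)$-terms, lies in $\mathscr{A}_0$.

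Next I would split $[\mathfrak{g}_0^{\mathrm{red}},\mathfrak{g}_0^{\mathrm{red}}]\subset\mathfrak{g}_0^{\mathrm{red}}$ into the six bracket types between the summands $\mathfrak{g}_{0,+}^{\mathrm{red}},\mathfrak{g}_{0,0}^{\mathrm{red}},\mathfrak{g}_{0,-}^{\mathrm{red}}$. Because $\iota$ is an antilinear automorphism of $\mathfrak{csp}(\mathfrak{g}_{-1})$ that fixes $\mathfrak{g}_{0,0}^{\mathrm{red}}$ and interchanges $\mathfrak{g}_{0,\pm}^{\mathrm{red}}$ (axioms (1),(2) of Definition \ref{ARMS}), closure of $[\mathfrak{g}_{0,0}^{\mathrm{red}},\mathfrak{g}_{0,-}^{\mathrm{red}}]$ and of $[\mathfrak{g}_{0,-}^{\mathrm{red}},\mathfrak{g}_{0,-}^{\mathrm{red}}]$ follows from that of $[\mathfrak{g}_{0,0}^{\mathrm{red}},\mathfrak{g}_{0,+}^{\mathrm{red}}]$ and of $[\mathfrak{g}_{0,+}^{\mathrm{red}},\mathfrak{g}_{0,+}^{\mathrm{red}}]$ respectively, so it suffices to treat four brackets. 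A direct matrix computation gives $[X_\alpha^0,X_\beta^0]=X_{[\alpha,\beta]}^0$, so $[\mathfrak{g}_{0,0}^{\mathrm{red}},\mathfrak{g}_{0,0}^{\mathrm{red}}]\subset\mathfrak{g}_{0,0}^{\mathrm{red}}$ is exactly $\mathscr{A}_0$ being a subalgebra of $\mathscr{A}$; computing $[X_\alpha^0,X_i^+]$ shows its $\mathfrak{l}_{+1}$-block equals $(\alpha A_iH_\ell^{-1}+A_iH_\ell^{-1}\alpha^T)H_\ell$ and its $\mathfrak{l}_0$-block equals $[\alpha,\Omega_i]$, so the membership criterion reproduces precisely relation (i) of \eqref{system} (defining the $\eta$'s) and relation (ii); and computing $[X_i^+,X_j^-]$ shows its $\mathfrak{l}_{-1}$-block is governed by (iii) (defining the $\mu$'s), its $\mathfrak{l}_{+1}$-block by the $\iota$-image of (iii), and its $\mathfrak{l}_0$-block by (iv). This already settles the ``only if'' direction completely, and it settles the ``if'' direction apart from the single bracket $[\mathfrak{g}_{0,+}^{\mathrm{red}},\mathfrak{g}_{0,+}^{\mathrm{red}}]$.

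The main obstacle is exactly this last bracket, which is not represented among (i)--(iv). Since $[R_i,R_j]=0$, one finds that $[X_i^+,X_j^+]$ has vanishing $\mathfrak{l}_{-1}$-part, $\mathfrak{l}_0$-part given by the block lift $C_{ij}$ of $[\Omega_i,\Omega_j]$, and $\mathfrak{l}_{+1}$-part $B_{ij}=[R_i,D_j]+[D_i,R_j]$; closure demands $B_{ij}\in\mathrm{span}\{R_s\}$ together with a residual $\mathscr{A}_0$-condition on $[\Omega_i,\Omega_j]$, neither of which is automatic from the free data of Lemma \ref{matrix representation lemma}. My plan is to derive both from (i)--(iv) via the Jacobi identity rather than to impose them separately: applying Jacobi to $(X_i^+,X_j^+,X_k^-)$ and using that $[X_i^+,X_k^-],[X_j^+,X_k^-]\in\mathfrak{g}_0^{\mathrm{red}}$ (already established from (iii),(iv)), the $\mathfrak{l}_{-1}$-projection expresses $[C_{ij},\iota(R_k)]$ and the $\mathfrak{l}_0$-projection expresses $[B_{ij},\iota(R_k)]$ in terms of the structure constants of $\mathfrak{csp}(\mathfrak{g}_{-1})$ and the already-controlled data $\eta,\mu,\Omega_s,A_s$. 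Running these relations back through the nondegeneracy of the conformal-symplectic pairing $\mathfrak{l}_{+1}\times\mathfrak{l}_{-1}\to\mathfrak{l}_0$ should force $B_{ij}\in\mathrm{span}\{R_s\}$ and place the corrected $\mathfrak{l}_0$-part in $\mathscr{A}_0$, completing closure. This is the step demanding the most care, and it is the analogue, in the present filtered setting, of the computation behind \cite[Proposition 5.4]{SYKES2023108850}; I note that in the rank-one case $\mathrm{rank}\,K=1$ the difficulty evaporates, since then $\mathfrak{g}_{0,+}^{\mathrm{red}}$ is one-dimensional and $[\mathfrak{g}_{0,+}^{\mathrm{red}},\mathfrak{g}_{0,+}^{\mathrm{red}}]=0$.
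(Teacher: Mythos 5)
Your overall scaffolding is correct and is the natural route to this statement (the paper itself gives no internal proof, deferring entirely to \cite[Proposition 5.4]{SYKES2023108850}, so the general-rank ``if'' direction is genuinely your burden). Specifically: the reduction to closure of $\mathfrak{g}_0^{\mathrm{red}}$ inside $\mathfrak{csp}(\mathfrak{g}_{-1})$ is valid; the block computations correctly identify $\mathscr{A}_0$ being a subalgebra with $[\mathfrak{g}_{0,0}^{\mathrm{red}},\mathfrak{g}_{0,0}^{\mathrm{red}}]$, relations (i)--(ii) of \eqref{system} with $[\mathfrak{g}_{0,0}^{\mathrm{red}},\mathfrak{g}_{0,+}^{\mathrm{red}}]$, and (iii)--(iv) with $[\mathfrak{g}_{0,+}^{\mathrm{red}},\mathfrak{g}_{0,-}^{\mathrm{red}}]$ (with the $\mathfrak{l}_{+1}$-block of the latter governed by the conjugate of (iii), as you say); the $\iota$-symmetry disposes of the two remaining mixed brackets, modulo one small repair: $\iota(\mathfrak{g}_{0,0}^{\mathrm{red}})=\mathfrak{g}_{0,0}^{\mathrm{red}}$ is not literally among axioms (1)--(2) of Definition \ref{ARMS}, but follows since $\mathfrak{g}_{0,0}^{\mathrm{red}}=\mathfrak{g}_{0}^{\mathrm{red}}\cap\mathfrak{l}_0$ (linear independence of $A_1,\ldots,A_r$, i.e.\ $2$-nondegeneracy, makes the $\mathfrak{l}_{\pm1}$-components detect $\mathfrak{g}_{0,\pm}^{\mathrm{red}}$) and $\iota$ preserves both intersectands. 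This settles ``only if'' completely, and settles the whole lemma when $r=1$, where $[\mathfrak{g}_{0,+}^{\mathrm{red}},\mathfrak{g}_{0,+}^{\mathrm{red}}]=0$ trivially --- which, as you note, is in fact the only case this paper ever applies (Section \ref{The classification of flat structures in C4 generated by ARMS} has $r=1$, and the linking and extension constructions and all later examples are rank one).

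The genuine gap is your final step, and the proposed Jacobi mechanism does not close it for $r\geq 2$. Writing $Z_{ij}:=[X_i^+,X_j^+]$ and expanding $[Z_{ij},X_k^-]=[[X_i^+,X_k^-],X_j^+]+[X_i^+,[X_j^+,X_k^-]]$, the decompositions of $[X_\cdot^+,X_k^-]$ supplied by (iii)--(iv) contain $X_s^+$-components, so the unknown brackets reappear on the right: one obtains $[Z_{ij},X_k^-]=G+\sum_s \overline{\mu_{ik}^s}\,Z_{sj}+\sum_s \overline{\mu_{jk}^s}\,Z_{is}$ with $G\in\mathfrak{g}_0^{\mathrm{red}}$, and the alternative triple $(X_i^+,X_j^-,X_k^-)$ fares no better since $[X_s^-,X_j^-]=\iota(Z_{sj})$ re-enters there. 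This only says the enlarged space $\mathfrak{g}_0^{\mathrm{red}}+\mathrm{span}\{Z_{st}\}$ is $\mathrm{ad}$-invariant under $\mathfrak{g}_0^{\mathrm{red}}$ --- a self-referential invariance statement, never membership $Z_{ij}\in\mathfrak{g}_0^{\mathrm{red}}$. Nondegeneracy of the pairing $\mathfrak{l}_{+1}\times\mathfrak{l}_{-1}\to\mathfrak{l}_0$ cannot rescue this: the formal shape of (iii)--(iv) a priori permits $\mathscr{A}_0=0$, all $\mu_{i,j}^s=0$ and $[X_i^+,X_k^-]=0$, in which case your Jacobi relations collapse to $[Z_{ij},X_k^-]=0$, a mere centralizer condition from which $B_{ij}\in\mathrm{span}\{R_s\}$ does not follow. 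Indeed, the $\mathfrak{l}_{-1}$-projection of your relation yields only that $[\Omega_i,\Omega_j]$ satisfies the single condition \eqref{secondalgebra}, strictly weaker than what closure demands. What forces closure in practice is not linear bracket bookkeeping but the relations (iv) used as explicit quadratic matrix identities in the entries of the $\Omega_i$ (together with symmetry of the $A_iH_\ell^{-1}$ and nondegeneracy of $H_\ell$): in rank-two test computations these quadratic equations eliminate exactly the entries of the $\Omega_i$ that would produce an $\mathfrak{l}_{+1}$-part of $Z_{ij}$ outside $\mathrm{span}\{R_s\}$. That computation is the substance of \cite[Proposition 5.4]{SYKES2023108850}, and for general $r$ you would need to reproduce it (or restrict the lemma to $r=1$) rather than replace it by the Jacobi argument as sketched.
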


\begin{remark}\label{symbol_as_tuple}
The matrices $A_{1},\ldots, A_{r}$, $H_\ell$ determine the CR symbol of the flat structure generated by the ARMS  $\mathfrak{g}^{0,\mathrm{red}}$. We therefore say that   $\mathfrak{g}^{0,\mathrm{red}}$ corresponds to the CR symbol of its flat structure.
\end{remark}

We have the following useful characterizations of regular symbols.
\begin{lemma}[{\cite[Remark 5.3]{SYKES2023108850}} and part of Lemma {\cite[Lemma 4.3]{sykes2021maximal}}]\label{regular CR symbol}
The flat structure generated by the ARMS given by 
\[
(H_\ell,A_{1},\ldots, A_{r},\Omega_{1},\ldots, \Omega_{r},\mathscr{A}_0)
\] 
has a regular CR symbol if and only if
\[
A_i\overline{A_j}A_k+A_k\overline{A_j}A_i\in\mathrm{span}\{A_1,\ldots, A_{n-r}\}\quad\quad\forall\, i,j,k.
\]
Furthermore, if an ARMS satisfying the subalgebra property (Definition \ref{subalgebra property}) having Levi kernel dimension $1$  is encoded by $(H_\ell,A_1,\Omega_1, \mathscr{A}_0)$ with $\Omega_1\in \mathscr{A}$ then this ARMS corresponds to a regular CR symbol (in the sense of Remark \ref{symbol_as_tuple}).
\end{lemma}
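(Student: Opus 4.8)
The plan is to read regularity directly off Definition \ref{CR symbol}: the CR symbol is regular exactly when the bigraded space $\mathfrak{g}^0=\mathfrak{g}_-\oplus\mathfrak{g}_{0,-2}\oplus\mathfrak{g}_{0,0}\oplus\mathfrak{g}_{0,2}$ is a Lie subalgebra of $\mathfrak{g}_-\rtimes\mathfrak{csp}(\mathfrak{g}_{-1})$. By Remark \ref{symbol_as_tuple} the CR symbol is encoded by $(H_\ell,A_1,\dots,A_r)$ alone, so its top component $\mathfrak{g}_{0,2}$ is spanned by the nilpotent operators $N_i:=\widetilde{\mathrm{ad}}_{v_i}$, which in the basis \eqref{Heisenberg basis} are the off-diagonal blocks built from the $A_i$ (the $\Omega_i$ being discarded), while $\mathfrak{g}_{0,-2}=\iota(\mathfrak{g}_{0,2})$ is spanned by $M_i:=\iota(N_i)$. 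First I would dispatch the bigrading bookkeeping: every bracket among the six summands either lands in a summand of $\mathfrak{g}^0$ for weight reasons, is absorbed by the defining properties of $\mathfrak{g}_{0,0}$ (which is the full weight-$(0,0)$ stabilizer of the remaining components), or must vanish because the target weight is absent---for instance $[\mathfrak{g}_{0,2},\mathfrak{g}_{0,2}]\subset\mathfrak{g}_{0,4}=0$ and $[\mathfrak{g}_{0,2},\mathfrak{g}_{-1,1}]\subset\mathfrak{g}_{-1,3}=0$, the latter being exactly the statement that $\widetilde{\mathrm{ad}}_{v}$ annihilates $\mathfrak{g}_{-1,1}$. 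This leaves a single genuine closure requirement, namely $[\mathfrak{g}_{0,2},\mathfrak{g}_{0,-2}]\subset\mathfrak{g}_{0,0}$.

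To resolve that requirement, note that since $\mathfrak{g}_{0,0}$ consists precisely of the weight-$(0,0)$ operators preserving $\mathfrak{g}_{0,\pm2}$, the membership $[N_i,M_j]\in\mathfrak{g}_{0,0}$ is equivalent to $[[N_i,M_j],N_k]\in\mathfrak{g}_{0,2}$ for all $k$ together with its $\iota$-conjugate (which governs $\mathfrak{g}_{0,-2}$). A direct block computation gives $[N_i,M_j]$ as a diagonal operator with blocks built from $A_i\overline{A_j}$ and $\overline{A_j}A_i$, and then $[[N_i,M_j],N_k]$ is the off-diagonal operator determined by $A_i\overline{A_j}A_k+A_k\overline{A_j}A_i$; demanding that this lie in $\mathrm{span}_{\mathbb{C}}\{N_1,\dots,N_r\}$ is exactly the stated condition $A_i\overline{A_j}A_k+A_k\overline{A_j}A_i\in\mathrm{span}_{\mathbb{C}}\{A_1,\dots,A_r\}$. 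The $\mathfrak{g}_{0,-2}$-condition is the image of this one under the involution $\iota$ and so holds automatically once the stated condition does, which yields the first equivalence.

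For the second assertion I would specialize to $r=1$, where the condition reduces to $A_1\overline{A_1}A_1\in\mathrm{span}_{\mathbb{C}}\{A_1\}$, and use the subalgebra property to transfer information from $\Omega_1$ to $A_1$. Writing the generator \eqref{g0plus} of $\mathfrak{g}_{0,+}^{\mathrm{red}}$ as $G_1=N_1+\Delta$ with $\Delta$ the diagonal part determined by $\Omega_1$, the bracket $[G_1,\iota(G_1)]$ has no $\mathfrak{g}_-$-component and so lies in $\mathfrak{g}_0^{\mathrm{red}}$; equating it with $a\,\iota(G_1)+D+b\,G_1$ (with $D\in\mathfrak{g}_{0,0}^{\mathrm{red}}$ and $a,b\in\mathbb{C}$) and comparing weight-$(0,0)$ blocks expresses $A_1\overline{A_1}$ as a combination of $\Omega_1$, its $\iota$-image, their commutator, and an element of $\mathscr{A}_0$. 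Because $\mathscr{A}$ from \eqref{intersection algebra} is a Lie algebra, is $\iota$-invariant (the defining relations \eqref{firstalgebra} and \eqref{secondalgebra} being interchanged by $\iota$), and contains $\mathscr{A}_0$, the hypothesis $\Omega_1\in\mathscr{A}$ forces every term on the right into $\mathscr{A}$, hence $A_1\overline{A_1}\in\mathscr{A}$. Feeding $\alpha=A_1\overline{A_1}$ into \eqref{firstalgebra} and using that $B:=A_1H_\ell^{-1}$ is symmetric, a short manipulation shows the two summands $\alpha B$ and $B\alpha^T$ both equal $A_1\overline{A_1}A_1H_\ell^{-1}$, so $2A_1\overline{A_1}A_1H_\ell^{-1}\in\mathrm{span}_{\mathbb{C}}\{A_1H_\ell^{-1}\}$, i.e.\ $A_1\overline{A_1}A_1\in\mathrm{span}_{\mathbb{C}}\{A_1\}$; the first assertion then gives regularity.

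The first assertion is, once the weight bookkeeping is organized, essentially a finite bracket computation, so I expect the conceptual difficulty to sit in the second assertion---specifically in correctly extracting the weight-$(0,0)$ part of $[G_1,\iota(G_1)]$ and in verifying the two closure properties of $\mathscr{A}$ (Lie-algebra closure and $\iota$-invariance) that make $\Omega_1\in\mathscr{A}$ propagate to $A_1\overline{A_1}\in\mathscr{A}$. The symmetry identity $\alpha B=B\alpha^T$ for $\alpha=A_1\overline{A_1}$, which collapses \eqref{firstalgebra} to the desired cubic, is the one calculational point where the Hermitian symmetry of $H_\ell$ and the symmetry of $A_1H_\ell^{-1}$ must be used carefully.
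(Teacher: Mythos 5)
Your proposal is correct, but note at the outset that the paper contains no internal proof of this lemma to compare against: it is imported wholesale from \cite[Remark 5.3]{SYKES2023108850} and \cite[Lemma 4.3]{sykes2021maximal}. What you have written is a self-contained reconstruction, and it checks out. For the first assertion, your weight bookkeeping is right: brackets with $\mathfrak{g}_-$ land in $\mathfrak{g}_-$, the brackets $[\mathfrak{g}_{0,0},\mathfrak{g}_{0,0}]$ and $[\mathfrak{g}_{0,0},\mathfrak{g}_{0,\pm2}]$ close by the stabilizer definition of $\mathfrak{g}_{0,0}$, $[\mathfrak{g}_{0,2},\mathfrak{g}_{0,2}]$ vanishes outright, and since $[N_i,M_j]$ is block-diagonal (hence automatically preserves $\mathfrak{g}_{-2,0}$ and $\mathfrak{g}_{-1,\pm1}$), its membership in $\mathfrak{g}_{0,0}$ reduces exactly to normalizing $\mathfrak{g}_{0,\pm2}$; the block identity $[[N_i,M_j],N_k]=\bigl(\begin{smallmatrix}0 & A_i\overline{A_j}A_k+A_k\overline{A_j}A_i\\ 0&0\end{smallmatrix}\bigr)$ is correct, and the $\mathfrak{g}_{0,-2}$ condition is indeed the $\iota$-conjugate. (You silently correct the index set to $\{A_1,\ldots,A_r\}$; the ``$n-r$'' in the statement, as in \eqref{g0plus}, is a typo, since an ARMS has only $r$ matrices $A_i$.) For the second assertion, your bracket $[G_1,\iota(G_1)]$ is precisely the content of items (iii) and (iv) of the system \eqref{system} in Lemma \ref{system of conditions for homogeneity lemma}: item (iv) gives $A\overline{A}=-\bigl[\overline{H_\ell^{-1}\Omega^TH_\ell},\Omega\bigr]+\overline{\mu_{1,1}^1}\Omega+\mu_{1,1}^1\overline{H_\ell^{-1}\Omega^TH_\ell}+\alpha_0$ with $\alpha_0\in\mathscr{A}_0$, and your two structural inputs on $\mathscr{A}$ both verify: $\iota$-stability follows because conjugating \eqref{secondalgebra} and multiplying by $-\overline{H_\ell}^{-1}$ on the left and $H_\ell^{-1}$ on the right produces exactly \eqref{firstalgebra} for $\beta=-\overline{H_\ell}^{-1}\alpha^*\overline{H_\ell}$, and Lie closure is built into the definition \eqref{intersection algebra}. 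The collapsing identity also verifies: self-adjointness of $\mathrm{ad}_v$ gives $A^T=\overline{H_\ell}AH_\ell^{-1}$, whence $AH_\ell^{-1}(A\overline{A})^T=A\overline{A}AH_\ell^{-1}$, so \eqref{firstalgebra} with $\alpha=A\overline{A}$ yields $2A\overline{A}AH_\ell^{-1}\in\mathrm{span}\{AH_\ell^{-1}\}$ as you claim. One small point worth patching: per \eqref{g00}, the $\mathfrak{g}_{0,0}^{\mathrm{red}}$ block may contain $\mathbb{C}I$ beyond $\mathscr{A}_0$, so your comparison of weight-$(0,0)$ blocks a priori gives $A\overline{A}\in\mathscr{A}_0+\mathbb{C}I+\cdots$; since $I\in\mathscr{A}$ trivially, the conclusion $A\overline{A}\in\mathscr{A}$ is unaffected. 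Your route is also consistent with how the paper actually deploys the lemma (e.g., the contrapositive argument opening Section \ref{main theorem proof subsection b}), so I see no gap.
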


\section{The classification of modified symbol models in $\mathbb{C}^4$}\label{The classification of flat structures in C4 generated by ARMS}
In this section we classify all ARMS that are reductions of modified CR symbols associated with homogeneous models on $7$-dimensional manifolds and show that the modified symbol models in $\mathbb{C}^4$ (i.e., homogeneous $2$-nondegenerate hypersurfaces in $\mathbb{C}^4$ that are maximally symmetric relative to their modified symbols) are locally equivalent to the flat structures in $\mathbb{C}^4$  generated by such ARMS. Hence the classification of modified symbol models follows from the classification of ARMS generating flat structures. Since we set $\dim(M)=2n+1$ and $\mathrm{rank}\, K=r$, in this section we have $n=3$.  Note that by \eqref{estim1} with $n=3$, we get that $r=1$, which corresponds to the fact that all uniformly $2$-nondegenerate 7-dimensional CR manifolds have a rank $1$ Levi kernel. Accordingly, each of the reduced modified symbols that we are classifying in this section is determined by a tuple $\{H_\ell,A_1,\Omega_1, \mathscr{A}_0\}$ satisfying the system in \eqref{system} with $r=1$. Here we have written $A_1$ and $\Omega_1$ to match the notation of Section \ref{Matrix representations of ARMS}, but for convenience let us omit the subscripts because they are unnecessary in this case with $r=1$.

Since $n-r=2$, $H_\ell$ and $A$ are $2\times 2$ matrices representing a nondegenerate Hermitian form $\ell$ and an $\ell$-selfadjoint antilinear operator, and, by changing a basis to bring such a pair to the canonical form of \cite[Theorem 2.2]{sykes2020canonical}, we can assume (after possibly rescaling $H_\ell$ and $A$ by different real coefficients) that they have one of the forms
\bgroup
\renewcommand\arraystretch{1}  
\begin{align}\label{ch3 7d case H and A formula 1}
H_\ell=
\left(
\begin{array}{cc}
1&0\\
0&\epsilon
\end{array}
\right)
\quad\mbox{ and }\quad
A=
\left(
\begin{array}{cc}
1&0\\
0&0
\end{array}
\right),
\quad\mbox{ for some } \epsilon=\pm1,
\end{align}
\begin{align}\label{ch3 7d case H and A formula 2}
H_\ell=
\left(
\begin{array}{cc}
1&0\\
0&\epsilon
\end{array}
\right)
\quad\mbox{ and }\quad
A= I\quad\mbox{ for some }  \epsilon=\pm1,
\end{align}
\begin{align}\label{ch3 7d case H and A formula 3}
H_\ell=
\left(
\begin{array}{cc}
0&1\\
1&0
\end{array}
\right)
\quad\mbox{ and }\quad
A=
\left(
\begin{array}{cc}
0&1\\
0&0
\end{array}
\right),
\end{align}
\begin{align}\label{ch3 7d case H and A formula 4}
H_\ell=
\left(
\begin{array}{cc}
0&1\\
1&0
\end{array}
\right)
\quad\mbox{ and }\quad
A=
\left(
\begin{array}{cc}
0&-1\\
1&0
\end{array}
\right),
\end{align}
\begin{align}\label{ch3 7d case H and A formula 5}
H_\ell=
\left(
\begin{array}{cc}
0&1\\
1&0
\end{array}
\right)
\quad\mbox{ and }\quad
A=
\left(
\begin{array}{cc}
0&i\\
1&0
\end{array}
\right),
\end{align}
\begin{align}\label{ch3 7d case H and A formula 6}
H_\ell=
\left(
\begin{array}{cc}
0&1\\
1&0
\end{array}
\right)
\quad\mbox{ and }\quad
A=
\left(
\begin{array}{cc}
0&e^{i\theta}\\
1&0
\end{array}
\right)
 \mbox{ for some } \theta\in\left(0,\tfrac{\pi}{2}\right)\cup\left(\tfrac{\pi}{2},\pi\right),
\end{align}
\begin{align}\label{ch3 7d case H and A formula 7}
H_\ell=
\left(
\begin{array}{cc}
1&0\\
0&\epsilon
\end{array}
\right)
\quad\mbox{ and }\quad
A=
\left(
\begin{array}{cc}
1&0\\
0&\lambda
\end{array}
\right)
\quad\mbox{ for some } \epsilon=\pm1,\, \lambda>1,
\end{align}
or
\begin{align}\label{ch3 7d case H and A formula 8}
H_\ell=
\left(
\begin{array}{cc}
0&1\\
1&0
\end{array}
\right)
\quad\mbox{ and }\quad
A=
\left(
\begin{array}{cc}
1&1\\
0&1
\end{array}
\right)
.
\end{align}
\egroup
These possible forms for the pair $(H_\ell, A)$ are ordered above to highlight a few key patterns. By Lemma \ref{regular CR symbol}, the CR symbols corresponding to  \eqref{ch3 7d case H and A formula 1}, \eqref{ch3 7d case H and A formula 2}, \eqref{ch3 7d case H and A formula 3}, and \eqref{ch3 7d case H and A formula 4} are regular and thus of the type classified in \cite{porter2021absolute}. Therefore, as noted in Remark \ref{regular flat structures},  \eqref{ch3 7d case H and A formula 1}, \eqref{ch3 7d case H and A formula 2}, \eqref{ch3 7d case H and A formula 3}, and \eqref{ch3 7d case H and A formula 4} are all associated with homogeneous models, and it remains for us to determine which ARMS satisfying the subalgebra property (Definition \ref{subalgebra property}) exist corresponding to these four cases. We furthermore need to determine which ARMS if any satisfy the subalgebra property and correspond to \eqref{ch3 7d case H and A formula 5}, \eqref{ch3 7d case H and A formula 6}, \eqref{ch3 7d case H and A formula 7}, or \eqref{ch3 7d case H and A formula 8}. Each such ARMS will generate a flat structure, and we will show that this flat structure is the unique modified symbol model having the same modified symbols. The conclusions of this analysis are summarized below in Theorems \ref{ch3 main 7d theorem a} and  \ref{ch3 main 7d theorem b}.

\begin{theorem}\label{ch3 main 7d theorem a}   Up to local equivalence, there are nine $7$-dimensional $2$-nondegenerate flat structures generated by ARMS (as described in Section \ref{preliminaries}). They are respectively generated by each of the nine ARMS described in Table  \ref{main theorem table}. In particular:
\begin{enumerate}
\item There exist three equivalence classes of ARMS satisfying the subalgebra property (Definition \ref{subalgebra property}) corresponding to \eqref{ch3 7d case H and A formula 1}, one for $\epsilon=1$ and two for $\epsilon=-1$. These are represented by types \Rmnum{3}, \Rmnum{4}.A, and \Rmnum{4}.B in Table \ref{main theorem table}.
\item There exist two equivalence classes of ARMS satisfying the subalgebra property (Definition \ref{subalgebra property}) corresponding to \eqref{ch3 7d case H and A formula 2}, one for each parameter setting of $\epsilon$. These are represented by types  \Rmnum{5}.A and \Rmnum{5}.B in Table \ref{main theorem table}.
\item There exists one equivalence class of ARMS satisfying the subalgebra property (Definition \ref{subalgebra property}) corresponding to each of the four cases \eqref{ch3 7d case H and A formula 3}, \eqref{ch3 7d case H and A formula 4},  \eqref{ch3 7d case H and A formula 5}, and \eqref{ch3 7d case H and A formula 8}. These are represented by types \Rmnum{1}, \Rmnum{2}, \Rmnum{6}, and \Rmnum{7} in Table \ref{main theorem table}.
\item No ARMS satisfying the subalgebra property (Definition \ref{subalgebra property}) correspond to any of the cases in \eqref{ch3 7d case H and A formula 6} and \eqref{ch3 7d case H and A formula 7}.
\end{enumerate}
The symmetry groups of these nine flat structures have the respective dimensions indicated in Table \ref{main theorem table}.
\end{theorem}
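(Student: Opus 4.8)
The plan is to convert the classification of flat structures into a finite problem about solutions of the system \eqref{system} with $r=1$, solved separately for each of the eight canonical pairs \eqref{ch3 7d case H and A formula 1}--\eqref{ch3 7d case H and A formula 8}. By Lemma \ref{matrix representation lemma} each candidate ARMS is a tuple $(H_\ell,A,\Omega,\mathscr{A}_0)$, and by Lemma \ref{system of conditions for homogeneity lemma} it satisfies the subalgebra property exactly when $\mathscr{A}_0$ is a subalgebra of $\mathscr{A}$ and the relations (i)--(iv) of \eqref{system} hold for suitable scalars $\eta_\alpha,\mu$. Throughout I would normalize by $I\in\mathscr{A}_0$ (permissible since adjoining $I$ does not change the generated flat structure) and use the observation that a completion (Definition \ref{ARMS completion}) preserves the Levi kernel dimension, hence keeps $\dim\mathfrak{g}_{0,+}=r=1$ fixed; thus completions of a given symbol differ only in their $\mathscr{A}_0$-part, and for the equivalence count of Definition \ref{equivalence of ARMS} I may always take $\mathscr{A}_0$ to be the maximal ($\iota$-invariant) subalgebra of $\mathscr{A}$ compatible with (ii) and (iv).

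First I would separate the regular pairs \eqref{ch3 7d case H and A formula 1}--\eqref{ch3 7d case H and A formula 4} from the non-regular pairs \eqref{ch3 7d case H and A formula 5}--\eqref{ch3 7d case H and A formula 8} using the criterion $A\overline{A}A\in\mathrm{span}\{A\}$ of Lemma \ref{regular CR symbol}, which a short computation confirms holds precisely for the first four. For the regular pairs, existence of an ARMS is automatic from Remark \ref{regular flat structures}, so the only task is to enumerate inequivalent solutions; for the non-regular pairs the second part of Lemma \ref{regular CR symbol} forces $\Omega\notin\mathscr{A}$, and since in each of \eqref{ch3 7d case H and A formula 5}--\eqref{ch3 7d case H and A formula 8} the matrix $A$ has rank $2$, Lemma \ref{recoverability} guarantees these structures are recoverable, so that Theorem \ref{maximally symmetric recoverable models} later identifies their flat structures as the unique models.

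The core computation, carried out form by form, runs as follows. I would first compute $\mathscr{A}$ from \eqref{firstalgebra}--\eqref{intersection algebra}; this is an explicit linear-algebra problem yielding a small Lie algebra of $2\times2$ matrices. Relation (i) of \eqref{system} then holds automatically for $\alpha\in\mathscr{A}_0\subset\mathscr{A}$ and merely defines the scalars $\eta_\alpha$, while relation (iii) is a condition on $\Omega$ alone --- it says $\Omega$ satisfies the defining relation \eqref{secondalgebra}, with $\mu$ the associated scalar --- so it cuts out the admissible $\Omega$ up to the residual group of basis changes fixing the canonical pair $(H_\ell,A)$. The remaining relations are membership conditions: (ii) controls which $\alpha$ may lie in $\mathscr{A}_0$, while (iv) is the decisive one, requiring the fixed term $A\overline{A}$, after correction by $[\overline{H_\ell^{-1}\Omega^T H_\ell},\Omega]-\overline{\mu}\,\Omega-\mu\,\overline{H_\ell^{-1}\Omega^T H_\ell}$, to land in $\mathscr{A}_0$. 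Solving (iv) for the admissible $\Omega$ --- modulo the residual group and the freedom of replacing $\Omega$ by $\Omega+\alpha$ with $\alpha\in\mathscr{A}_0$, which reflects the non-canonical splitting \eqref{modified symbol decomposition a} --- produces the finitely many solutions, and reading off their completions gives the equivalence classes, matching the counts $3,2,1,1,1,1$ for the pairs that admit solutions and assigning each to its type in Table \ref{main theorem table}.

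I expect two places to be the main obstacles. The first is the non-existence assertion of item (4): for \eqref{ch3 7d case H and A formula 6} and \eqref{ch3 7d case H and A formula 7} I must show that relation (iv) has no solution, i.e.\ that for every $\Omega$ satisfying (iii) the element $A\overline{A}+[\overline{H_\ell^{-1}\Omega^T H_\ell},\Omega]-\overline{\mu}\,\Omega-\mu\,\overline{H_\ell^{-1}\Omega^T H_\ell}$ necessarily has a component transverse to every admissible $\mathscr{A}_0\subset\mathscr{A}$; the continuous parameters $\theta$ and $\lambda$ make the bookkeeping delicate, and the argument must exclude solutions uniformly in these parameters. The second is the refined counting in case \eqref{ch3 7d case H and A formula 1}: tracking the residual basis-change group together with the completion operation to show that $\epsilon=-1$ yields exactly two algebraically inequivalent completions (types \Rmnum{4}.A and \Rmnum{4}.B) while $\epsilon=1$ yields only one (type \Rmnum{3}) requires isolating a discrete invariant separating the two completions. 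Once the classification of ARMS is complete, the symmetry-group dimensions in Table \ref{main theorem table} follow from computing the universal Tanaka prolongation of each ARMS, whose dimension is the sharp bound furnished by Theorems \ref{maximally symmetric regular models} and \ref{maximally symmetric recoverable models}.
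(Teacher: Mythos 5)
Your overall architecture is the same as the paper's: reduce to the canonical pairs of \cite[Theorem 2.2]{sykes2020canonical}, test consistency of \eqref{system} with $r=1$ case by case, use item (iii) to constrain $\Omega$, item (iv) as the decisive membership condition, and absorb the residual phase via a scaling subgroup as in \eqref{ch3 7d scaling group}. However, your blanket normalization ``$I\in\mathscr{A}_0$'' is a genuine error that would collapse the whole classification to the regular case. The harmless normalization is adjoining the identity of $\mathfrak{csp}(\mathfrak{g}_{-1})$ to $\mathfrak{g}_{0,0}^{\mathrm{red}}$, which per \eqref{g00} is \emph{not} an element of the form $\left(\begin{smallmatrix}\alpha&0\\0&-H_\ell^{-1}\alpha^TH_\ell\end{smallmatrix}\right)$ and hence not an element of $\mathscr{A}_0$. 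If instead $I\in\mathscr{A}_0$, then item (i) forces $\eta_{I,1}^1=2$ and item (ii) gives $[I,\Omega]-2\Omega=-2\Omega\in\mathscr{A}_0$, so $\Omega\in\mathscr{A}_0\subset\mathscr{A}$ and, by Lemma \ref{regular CR symbol}, you are confined to regular symbols with $\mathscr{A}_0=\mathscr{A}$. This eliminates exactly the three new structures: types \Rmnum{1} and \Rmnum{2} (where the paper shows $\mathscr{A}_0=0$ is forced, precisely by ruling $I$ out of $\mathscr{A}_0$) and type \Rmnum{3} (where $\mathscr{A}_0=\mathrm{span}\{\mathrm{diag}(1,3)\}$, and the proof of Lemma \ref{ch3 7d submaximal regular lemma} explicitly notes $I\notin\mathscr{A}_0$). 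Relatedly, your labels in case \eqref{ch3 7d case H and A formula 1} are swapped: $\epsilon=1$ yields only type \Rmnum{4}.A, while $\epsilon=-1$ yields \Rmnum{4}.B ($\Omega\in\mathscr{A}$) and \Rmnum{3} ($\Omega\notin\mathscr{A}$); these two are immediately inequivalent since $\dim\mathscr{A}_0$ differs, so no subtle discrete invariant is needed --- the real work is showing the $\Omega\notin\mathscr{A}$ solution is unique up to the phase $\theta$.

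The second genuine gap is your final step, reading every symmetry dimension off the universal Tanaka prolongation of the ARMS. This fails for type \Rmnum{3}: its universal prolongation is the $14$-dimensional $\mathfrak{g}_2$ (Section \ref{main theorem proof subsection d}), yet its symmetry group is $9$-dimensional. Theorem \ref{maximally symmetric recoverable models} does not apply because type \Rmnum{3} is not recoverable (its $\mathfrak{g}_{0,2}$ is spanned by the rank-one operator $A=\mathrm{diag}(1,0)$, so Lemma \ref{recoverability} gives nothing --- note your ``rank $2$'' recoverability remark covers cases \eqref{ch3 7d case H and A formula 5} and \eqref{ch3 7d case H and A formula 8} but not this one), and Theorem \ref{maximally symmetric regular models} does not apply because type \Rmnum{3} is submaximal relative to its CR symbol, which it shares with type \Rmnum{4}.B. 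The paper instead obtains the upper bound $9$ from the result of \cite{porter2021absolute} that \Rmnum{4}.B is the unique structure with that symbol having symmetry dimension at least $10$, the lower bound from the $9$-dimensional ARMS itself, and uniqueness from the separate $G_2$-prolongation argument of Lemma \ref{g2 structure theorem}. Without some substitute for that argument your plan outputs $14$ for type \Rmnum{3} and leaves the last sentence of the theorem unproved in that case.
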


\begin{corollary}\label{ch3 main 7d corollary}
The adapted (partial) frame bundle $P^0$ of a $7$-dimensional homogeneous $2$\--nondegener\-ate hypersurface-type CR manifold admits a reduction having the structure of a principal bundle  over the complexified Levi leaf space (defined locally in \cite{SYKES2023108850}) whose structure group has as its Lie algebra the degree zero component of one of the nine ARMS in Table \ref{main theorem table}. Thus every such CR manifold is canonically assigned one of the nine types labeled in Table \eqref{main theorem table}, and, moreover, this type is determined by any one of its modified CR symbols.
\end{corollary}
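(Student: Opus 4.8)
The plan is to deduce the statement directly from Lemma \ref{constant symbol reduction lemma} together with the enumeration in Theorem \ref{ch3 main 7d theorem a}, the former producing a reduction with a well-behaved constant symbol and the latter identifying that symbol as one of nine. First I would take an arbitrary $7$-dimensional homogeneous $2$-nondegenerate hypersurface-type CR manifold $(M,H)$ and apply Lemma \ref{constant symbol reduction lemma} to obtain a reduction $P^{0,\mathrm{red}}$ of $P^0$ whose associated reduced modified symbol $\mathfrak{g}^{0,\mathrm{red}}$ is constant on $P^{0,\mathrm{red}}$, invariant under $\iota$, and a subalgebra of $\mathfrak{g}_{-}\rtimes\mathfrak{csp}(\mathfrak{g}_{-1})$. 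As a reduced modified symbol it is in particular an ARMS (Definition \ref{ARMS}), and by the lemma it satisfies the subalgebra property (Definition \ref{subalgebra property}). Since $\dim M=7$ gives $n=3$, the bound \eqref{estim1} forces $r=\mathrm{rank}\,K=1$, so $\mathfrak{g}^{0,\mathrm{red}}$ is an ARMS with Levi kernel dimension $1$.

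Next I would invoke Theorem \ref{ch3 main 7d theorem a}, which says that up to equivalence (Definition \ref{equivalence of ARMS}) there are exactly nine such ARMS, namely those in Table \ref{main theorem table}. Hence $\mathfrak{g}^{0,\mathrm{red}}$ is equivalent to the ARMS of precisely one type, and this is the type assigned to $(M,H)$. For the bundle claim, recall that $P^0$ is fibered over the Levi leaf space by $\pi\circ\mathrm{pr}$ and that the reductions produced by the reduction procedure are sub-bundles which, by the homogeneity argument in the proof of Lemma \ref{constant symbol reduction lemma}, project surjectively onto $M$ and hence onto the leaf space. With constant reduced modified symbol, $P^{0,\mathrm{red}}$ is a principal bundle over the (complexified) Levi leaf space whose structure group has Lie algebra the degree-zero component $\mathfrak{g}_0^{\mathrm{red}}=\mathfrak{g}_{0,-}^{\mathrm{red}}\oplus\mathfrak{g}_{0,0}^{\mathrm{red}}\oplus\mathfrak{g}_{0,+}^{\mathrm{red}}$; this is a subalgebra because $\mathfrak{g}^{0,\mathrm{red}}$ is graded, and under the equivalence above it matches the degree-zero component of the corresponding tabulated ARMS. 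Passing from the bundle over $M$, whose structure algebra is $\mathfrak{g}_{0,0}^{\mathrm{red}}$, to the bundle over the leaf space enlarges the structure algebra by the leaf directions, which correspond to $\mathfrak{g}_{0,+}^{\mathrm{red}}\oplus\mathfrak{g}_{0,-}^{\mathrm{red}}$.

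It then remains to check that the assignment is canonical and, in fact, already determined by any single modified CR symbol. A modified symbol $\mathfrak{g}^{0,\mathrm{mod}}(\psi)$ determines its whole $G_{0,0}$-orbit \eqref{orbit of symbols invariant} through the conjugation \eqref{orbit of symbols}; choosing a representative with $\psi\in\Re P^0$, the algebraic reduction procedure \eqref{algebraic red}, i.e.\ the canonical normalizer filtration $\mathfrak{g}^{0,\mathrm{mod}}(\psi)=V_0\supset\cdots\supset V_s$, yields the $\iota$-invariant reduced modified symbol $V_s=\mathfrak{g}^{0,\mathrm{red}}$ realized by the reduction of Lemma \ref{constant symbol reduction lemma}. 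Since $G_{0,0}$-conjugation preserves the grading, it commutes with the normalizer recursion, so different representatives of one orbit yield $G_{0,0}$-conjugate and hence ARMS-equivalent outputs $V_s$; combined with the pairwise inequivalence of the nine types in Theorem \ref{ch3 main 7d theorem a}, this shows every modified CR symbol of $(M,H)$ produces the same type.

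The hard part will not be any single computation but the bookkeeping confirming that the reduction $P^{0,\mathrm{red}}$ furnished abstractly by Lemma \ref{constant symbol reduction lemma} genuinely carries the principal-bundle structure over the locally defined complexified Levi leaf space with structure algebra exactly $\mathfrak{g}_0^{\mathrm{red}}$. This relies on the detailed constructions of $P^0$ and of the complexified leaf space in \cite{SYKES2023108850}, together with the identification of the Levi-foliation leaf directions with $\mathfrak{g}_{0,+}^{\mathrm{red}}\oplus\mathfrak{g}_{0,-}^{\mathrm{red}}$ in the reduced symbol; once these are in place, the rest of the corollary is a formal consequence of Lemma \ref{constant symbol reduction lemma} and Theorem \ref{ch3 main 7d theorem a}.
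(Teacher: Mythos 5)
Your proposal is correct and follows exactly the paper's route: the paper's proof is the one-line observation that the corollary is immediate from Lemma \ref{constant symbol reduction lemma} (which furnishes the reduction with constant, $\iota$-invariant reduced modified symbol satisfying the subalgebra property) combined with Theorem \ref{ch3 main 7d theorem a} (which identifies that ARMS, necessarily of Levi kernel dimension $1$ since \eqref{estim1} forces $r=1$ when $n=3$, with one of the nine types in Table \ref{main theorem table}). Your additional bookkeeping -- the principal-bundle structure over the complexified Levi leaf space with structure algebra $\mathfrak{g}_0^{\mathrm{red}}$, and the well-definedness of the type via the $G_{0,0}$-equivariance of the algebraic reduction procedure \eqref{algebraic red} -- is a sound filling-in of details the paper leaves implicit, relying as the paper does on the constructions of \cite{SYKES2023108850}.
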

\begin{proof}
This is an immediate corollary of Lemma \ref{constant symbol reduction lemma} and Theorem \ref{ch3 main 7d theorem a}.
\end{proof}
\begin{lemma}\label{g2 structure theorem}
If a $7$-dimensional homogeneous $2$-nondegenerate hypersurface-type CR manifold $(M,H)$ whose $P^0$ bundle admits a reduction $P^{0,\mathrm{red}}$ with constant reduced modified symbol $\mathfrak{g}^{0,\mathrm{red}}$ of type \Rmnum{3} (as enumerated in Table \ref{main theorem table}) has a $9$-dimensional symmetry group then $(M,H)$ is locally equivalent to the flat structure generated by $\mathfrak{g}^{0,\mathrm{red}}$.
\end{lemma}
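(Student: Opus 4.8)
The plan is to deduce the result from the rigidity of the associated dynamical Legendrian contact (DLC) structure together with the gap phenomenon for parabolic geometries of type $\mathfrak{g}_2$. As noted in the introduction, the reduced modified symbol of type \Rmnum{3} produces a DLC structure whose Tanaka prolongation is the $14$-dimensional exceptional Lie algebra $\mathfrak{g}_2$; since this DLC structure depends only on $\mathfrak{g}^{0,\mathrm{red}}$, every type \Rmnum{3} CR manifold $(M,H)$ carries a DLC structure of this same $\mathfrak{g}_2$ type. The strategy is to use the $9$-dimensional symmetry hypothesis to force this DLC structure to be flat, and then to recover $(M,H)$ from it.

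First I would observe that every infinitesimal CR symmetry of $(M,H)$ induces an infinitesimal symmetry of its DLC structure, so the hypothesis that $(M,H)$ has a $9$-dimensional symmetry group forces the DLC structure to have symmetry dimension at least $9$. Next I would invoke the gap phenomenon for $\mathfrak{g}_2$ parabolic geometries, by which the submaximal symmetry dimension for such structures is strictly smaller than $9$ (in fact $7$). Since the symmetry dimension of the DLC structure cannot lie strictly between this submaximal value and the maximal value $14$, and it is at least $9$, it must equal $14$; hence the DLC structure of $(M,H)$ is locally flat, i.e.\ locally equivalent to the DLC structure of the type \Rmnum{3} flat model.

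It then remains to recover the CR structure from its now-flat DLC structure, which is where the genuine difficulty lies. Type \Rmnum{3} is precisely the case excluded by Lemma \ref{recoverability}, since the operator spanning $\mathfrak{g}_{0,2}$ has rank $1$; consequently recoverability is not automatic, and a flat DLC structure does not a priori determine $(M,H)$ up to local equivalence. To close this gap I would analyze the CR lifts of the flat $\mathfrak{g}_2$ DLC structure directly: a homogeneous such lift is encoded by the subalgebra of $\mathfrak{g}_2$ stabilizing the additional CR data, namely the $9$-dimensional subalgebra $\mathfrak{s}\subset\mathfrak{g}_2$ realized by the CR symmetries of $(M,H)$. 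The task is then to show that, among all $9$-dimensional subalgebras of $\mathfrak{g}_2$ arising from type \Rmnum{3} CR data compatible with the involution $\iota$, there is a single conjugacy class within the real form of $\mathfrak{g}_2$ singled out by $\iota$.

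The main obstacle is thus this final Lie-theoretic rigidity statement. I expect it to follow from a direct examination of the graded components of the prolongation of $\mathfrak{g}^{0,\mathrm{red}}$, combined with the reality constraints that $\iota$ imposes on its nonnegative and negative parts, but it is the step demanding the most care, precisely because the automatic recoverability criterion fails here. Once uniqueness of $\mathfrak{s}$ up to conjugacy is established, the type \Rmnum{3} flat model is exhibited as one such structure, and any $(M,H)$ satisfying the hypotheses is necessarily locally equivalent to it, completing the argument.
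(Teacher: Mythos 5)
There is a genuine gap, and it sits exactly where you locate it: the recovery of $(M,H)$ from its DLC structure. You correctly observe that type \Rmnum{3} falls outside the hypotheses of Lemma \ref{recoverability} (the operator spanning $\mathfrak{g}_{0,2}$ has rank $1$), so that even a locally flat DLC structure does not determine the CR structure; but the uniqueness statement you then need --- that the $9$-dimensional subalgebras of $\mathfrak{g}_2$ arising from type \Rmnum{3} CR lifts compatible with $\iota$ form a single conjugacy class --- is precisely the content of the lemma, and your proposal leaves it as an expectation rather than an argument. The paper closes this step by a different mechanism, and notably never needs flatness of the DLC structure as an intermediate result: it invokes the geometric prolongations $P^1,P^2$ of $P^{0,\mathrm{red}}$ from \cite{SYKES2023108850}, whose absolute parallelism identifies every tangent space of $P^2$ with $\mathfrak{u}\cong\mathrm{Lie}(G_2)$, together with the fact that infinitesimal symmetries with nonzero value at a point of $\mathbb{C}M$ correspond bijectively (modulo isotropy) to elements of $\mathfrak{u}_{-2}\oplus\mathfrak{u}_{-1}\oplus\mathfrak{u}_{0,-2}\oplus\mathfrak{u}_{0,2}$. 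Homogeneity forces this $7$-dimensional space into $\mathrm{gr}(\mathfrak{g})$, and a grading count against \eqref{prol alg grading} shows the only $9$-dimensional graded subalgebra of $\mathfrak{u}$ containing it is $\mathfrak{u}_{-2}\oplus\mathfrak{u}_{-1}\oplus\mathfrak{u}_0=\mathfrak{g}^{0,\mathrm{red}}$; then $\mathfrak{g}\cong\mathrm{gr}(\mathfrak{g})$ by \cite[Lemma 3]{doubrov1999contact}, and $(M,H)$ is reconstructed as $\Re G/\Re G_o$ carrying the left-invariant distribution $\mathfrak{g}_{-1,1}\oplus\mathfrak{u}_{0,0}\oplus\mathfrak{u}_{0,2}$. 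Any completion of your plan would have to reproduce an argument of essentially this strength, so the conjugacy classification cannot be waved through as ``a direct examination of the graded components.''

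A secondary problem is your flatness step itself. Neither this paper nor \cite{SYKES2023108850} shows that a DLC structure with this symbol is a regular normal parabolic geometry, so the gap phenomenon for parabolic geometries does not apply off the shelf; one would first have to prove that such a DLC structure canonically induces a $G_2/P_2$ geometry (the grading \eqref{prol alg grading}, with $\dim\mathfrak{u}_{\pm 1}=4$ and $\dim\mathfrak{u}_{\pm 2}=1$, is the contact grading, not the $(2,3,5)$ one). Relatedly, the submaximal dimension $7$ you quote is the value for $G_2/P_1$, i.e.\ for $(2,3,5)$-distributions; for the contact parabolic relevant here the submaximal dimension is $8$. Since $8<9$ your numerical inference would survive, but the applicability is unproven --- and, as explained above, even granting DLC flatness the lemma would remain unestablished.
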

We defer the proof of Lemma \ref{g2 structure theorem} to Section \ref{main theorem proof subsection d}.
\begin{theorem}\label{ch3 main 7d theorem b}
The symmetry group $\mathrm{Aut}(M)$ of a $7$-dimensional homogeneous $2$-nondegenerate hyper\-surface-type CR manifold $M$ has dimension bounded by that of the flat structure having the same type as $M$ (as described in Corollary \ref{ch3 main 7d corollary}). This bound is given in Table \ref{main theorem table}, and if $\dim\mathrm{Aut}(M)$ attains its bound then the CR structure on $M$ is flat (as defined in Definition \ref{flat structure definition}).  In particular the flat structures are in one-to-one correspondence with modified symbol models (as defined in Section \ref{introduction}). 
\end{theorem}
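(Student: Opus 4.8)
The plan is to argue type by type. Given an arbitrary $7$-dimensional homogeneous $2$-nondegenerate hypersurface-type CR manifold $M$, Corollary~\ref{ch3 main 7d corollary} assigns it one of the nine types of Table~\ref{main theorem table} via a reduction $P^{0,\mathrm{red}}$ carrying a constant reduced modified symbol $\mathfrak{g}^{0,\mathrm{red}}$. It then suffices to establish, for each of the nine reduced modified symbols enumerated in Theorem~\ref{ch3 main 7d theorem a}, that (a) the symmetry dimension of the flat structure generated by $\mathfrak{g}^{0,\mathrm{red}}$ is an upper bound for $\dim\mathrm{Aut}(M)$ among all homogeneous $M$ of that type, and (b) any such $M$ attaining the bound is locally equivalent to that flat structure. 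Granting (a) and (b), the one-to-one correspondence follows: the nine flat structures have pairwise distinct types (Theorem~\ref{ch3 main 7d theorem a}), and by (a)--(b) each is, up to local equivalence, the unique maximally symmetric homogeneous structure sharing its modified symbol, i.e.\ the modified symbol model of its type.

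First I would dispose of the two types with non-regular CR symbol, namely types~\Rmnum{6} and~\Rmnum{7} arising from \eqref{ch3 7d case H and A formula 5} and \eqref{ch3 7d case H and A formula 8}. By Lemma~\ref{regular CR symbol} their symbols are non-regular, so Lemma~\ref{recoverability} shows these structures are recoverable, whence Theorem~\ref{maximally symmetric recoverable models} applies directly. That theorem supplies simultaneously the sharp bound---the complex dimension of the universal Tanaka prolongation of $\mathfrak{g}^{0,\mathrm{red}}$---and the rigidity statement~(b), so here the only remaining task is to compute the prolongation dimension and match it to the entry in Table~\ref{main theorem table}.

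Next I would treat the regular types other than~\Rmnum{3}, that is types~\Rmnum{4}.A and~\Rmnum{4}.B from \eqref{ch3 7d case H and A formula 1} with $\epsilon=-1$, and types~\Rmnum{5}.A, \Rmnum{5}.B, \Rmnum{1}, \Rmnum{2} from \eqref{ch3 7d case H and A formula 2}--\eqref{ch3 7d case H and A formula 4}. For these Lemma~\ref{regular CR symbol} gives regularity, and Theorem~\ref{maximally symmetric regular models modified symbols} ensures the flat structure has constant modified symbol equal to its (regular) CR symbol, so that the modified symbol model coincides with the flat structure. Here the bound and rigidity come from Theorem~\ref{maximally symmetric regular models}, once one verifies that the flat structure realizes the bound of \cite[Theorem 3.1]{porter2021absolute}, so that this bound equals the tabulated dimension; since these six structures are maximal with respect to their generalized Levi forms, this verification should be routine.

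The crux is type~\Rmnum{3}, from \eqref{ch3 7d case H and A formula 1} with $\epsilon=1$. The obstacle is that, although its CR symbol is regular, this model is submaximal with respect to its generalized Levi form: the bound of \cite[Theorem 3.1]{porter2021absolute} is \emph{not} sharp, being governed instead by the $14$-dimensional symmetry $\mathfrak{g}_2$ of the associated dynamical Legendrian contact structure rather than by the $9$-dimensional CR symmetry. Thus Theorem~\ref{maximally symmetric regular models} is vacuous here, and I would instead invoke Lemma~\ref{g2 structure theorem} for the rigidity at dimension $9$, together with the separate argument---deferred to the proof of that lemma---that the CR symmetry algebra, realized inside $\mathfrak{g}_2$ as the subalgebra preserving the additional CR data, cannot exceed dimension $9$. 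Pinning the sharp bound at $9$, strictly below the naive prolongation value $14$, is where the real work of the type~\Rmnum{3} case lies. Assembling the three cases gives $\dim\mathrm{Aut}(M)\le$ the tabulated flat dimension with equality forcing flatness, and combined with Theorem~\ref{ch3 main 7d theorem a} this produces the asserted bijection between flat structures and modified symbol models.
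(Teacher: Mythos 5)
Your overall architecture is exactly the paper's: a three-way split into (i) types disposed of by Theorem \ref{maximally symmetric recoverable models} via Lemma \ref{recoverability}, (ii) regular types disposed of by the results of \cite{porter2021absolute} (Theorem \ref{maximally symmetric regular models}), and (iii) type \Rmnum{3}, deferred to Lemma \ref{g2 structure theorem}. However, your assignment of types to the normal forms \eqref{ch3 7d case H and A formula 1}--\eqref{ch3 7d case H and A formula 8} is systematically scrambled, and under the labels as written two of your three cases misapply the cited theorems. In Table \ref{main theorem table} the \emph{non-regular} symbols are types \Rmnum{1} and \Rmnum{2}, which arise from \eqref{ch3 7d case H and A formula 5} and \eqref{ch3 7d case H and A formula 8}; types \Rmnum{6} and \Rmnum{7} arise from \eqref{ch3 7d case H and A formula 4} and \eqref{ch3 7d case H and A formula 3} and are \emph{regular} (their $\Omega=0$ lies in $\mathscr{A}$, so Lemma \ref{regular CR symbol} gives regularity). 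Consequently your first case, which derives recoverability of types \Rmnum{6} and \Rmnum{7} from alleged non-regularity, is wrong for both, and fatally so for type \Rmnum{7}: its $A$ has rank $1$ and its symbol is regular, so neither clause of Lemma \ref{recoverability} applies, and type \Rmnum{7} must instead be treated through \cite{porter2021absolute} --- yet your second case omits it. (Type \Rmnum{6} happens to be recoverable anyway, since its $\mathfrak{g}_{0,2}$ is spanned by a rank-$2$ operator; this is in fact the route the paper uses for types \Rmnum{5} and \Rmnum{6}.) Symmetrically, your second case routes types \Rmnum{1} and \Rmnum{2} through Theorem \ref{maximally symmetric regular models}, whose hypothesis of a regular CR symbol they fail; in the paper they go through Theorem \ref{maximally symmetric recoverable models}, their recoverability coming from the last sentence of Lemma \ref{recoverability}. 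So, as written, type \Rmnum{7} has no valid treatment and types \Rmnum{1}, \Rmnum{2} have an invalid one; with the labels repaired, your case analysis coincides with the paper's.

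Two further corrections on type \Rmnum{3}. It arises from \eqref{ch3 7d case H and A formula 1} with $\epsilon=-1$ and $\Omega\notin\mathscr{A}$ (Lemma \ref{ch3 7d submaximal regular lemma}), not with $\epsilon=1$; types \Rmnum{4}.A and \Rmnum{4}.B correspond to $\epsilon=1$ and $\epsilon=-1$ respectively with $\Omega\in\mathscr{A}$. More substantively, the sharp bound of $9$ is not extracted from the $14$-dimensional $\mathfrak{g}_2$-symmetry of the associated DLC structure, as your sketch suggests: the paper obtains the upper bound cheaply from the uniqueness statement of \cite{porter2021absolute} for the CR symbol that type \Rmnum{3} shares with type \Rmnum{4}.B --- any structure with that symbol and symmetry dimension at least $10$ is the type \Rmnum{4}.B flat structure, hence not of type \Rmnum{3}, so type \Rmnum{3} structures have symmetry dimension at most $9$ --- and the $\mathfrak{g}_2$-prolongation enters only in the proof of Lemma \ref{g2 structure theorem}, to establish rigidity (local uniqueness) at dimension exactly $9$. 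Deferring rigidity to that lemma is legitimate, since the paper's own proof of the theorem does the same, but the provenance of the bound should be stated correctly.
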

\begin{proof}
This theorem is a corollary of Lemma \ref{g2 structure theorem}, Theorem \ref{maximally symmetric recoverable models}, and the main results of \cite{porter2021absolute}. Indeed, for types \Rmnum{1}, \Rmnum{2}, \Rmnum{5}, and \Rmnum{6} it follows immediately from Theorem \ref{maximally symmetric recoverable models}, whereas for types \Rmnum{4}, \Rmnum{5}, \Rmnum{6}, and \Rmnum{7} it is a special case of the results in \cite{porter2021absolute}. See Section \ref{main theorem proof subsection c} for more detail on how these previous theorems are applied.
 
The remaining case, type \Rmnum{3}, is addressed by Lemma \ref{g2 structure theorem}.
\end{proof}

We prove Theorem \ref{ch3 main 7d theorem a} and Lemma \ref{g2 structure theorem} and expand on the proof of Theorem \ref{ch3 main 7d theorem b} in Sections \ref{main theorem proof subsection a} through \ref{main theorem proof subsection d}, presenting the proofs with the following outline, partitioned into two steps.

\textbf{Step 1 (}sections \ref{main theorem proof subsection a}\textbf{-}\ref{main theorem proof subsection b}\textbf{):} We give a constructive proof of the existence statements in Theorem \ref{ch3 main 7d theorem a},  explicitly describing the equivalence classes of reduced modified CR  symbols referred to in Theorem \ref{ch3 main 7d theorem a}. Since our classification goal reduces to describing the tuples $(H_\ell, A,\Omega ,\mathscr{A}_0)$ for which the system \eqref{system} is consistent, we will suppose that $\Omega$  and $\mathscr{A}_0$ are fixed such that \eqref{system} is satisfied. We also let $\mathfrak{g}^0$ be the CR symbol encoded by the pair $(H_\ell,A)$, as described in Remark \ref{symbol_as_tuple}. Depending on the value of  $(H_\ell, A)$ we will either describe this pair $(\Omega ,\mathscr{A}_0)$ in more detail, deriving the corresponding formulas in Table \ref{main theorem table}, or derive a contradiction from the assumption that such a pair exists. Doing this for all $(H_\ell, A)$ in the normal forms of \eqref{ch3 7d case H and A formula 1} through \eqref{ch3 7d case H and A formula 8} completes the proof of the existence statements in Theorem \ref{ch3 main 7d theorem a}.

\textbf{Step 2 (}sections \ref{main theorem proof subsection c}\textbf{-}\ref{main theorem proof subsection d}\textbf{):} Establishing the symmetry group bounds of Theorems \ref{ch3 main 7d theorem a} and  \ref{ch3 main 7d theorem b} and the local uniqueness statements of Theorem \ref{ch3 main 7d theorem b} requires different arguments for the different types in Table \ref{main theorem table}, which we present in Sections \ref{main theorem proof subsection c} and \ref{main theorem proof subsection d}.  As noted in the proof of Theorem \ref{ch3 main 7d theorem b}, types  \Rmnum{1}, \Rmnum{2}, \Rmnum{5}.A,  \Rmnum{5}.B, and \Rmnum{6} can be treated as immediate applications  of \cite[Theorem 6.2]{SYKES2023108850}, which we explain further in Section \ref{main theorem proof subsection c}, whereas Theorem \ref{ch3 main 7d theorem b} is already proven in \cite{porter2021absolute} for types  \Rmnum{4}.A, \Rmnum{4}.B, \Rmnum{5}.A, \Rmnum{5}.B, \Rmnum{6}, and \Rmnum{7}.

\subsection{{Symbols corresponding to formulas (\ref{ch3 7d case H and A formula 1}) through (\ref{ch3 7d case H and A formula 4})}}\label{main theorem proof subsection a}

Suppose that $H_\ell$ and $A$ are as in \eqref{ch3 7d case H and A formula 1}, \eqref{ch3 7d case H and A formula 2}, \eqref{ch3 7d case H and A formula 3}, or \eqref{ch3 7d case H and A formula 4}.  Since, by Lemma \ref{regular CR symbol},   $\mathfrak{g}^0$ is regular, the algebra $\mathfrak{g}^0$ is itself a reduced modified CR symbol corresponding to the pair $(H_\ell, A)$, as noted in Remark \ref{regular flat structures}. This reduced modified symbol is described by taking $\mathscr{A}_0=\mathscr{A}$ and taking $\Omega $ to be any matrix in $\mathscr{A}$. Hence, all that remains for us to do is determine whether or not there exist  reduced modified symbols for which $\Omega $ is not in $\mathscr{A}$.

If $(H_\ell, A)$ is as in  \eqref{ch3 7d case H and A formula 1}, then there turns out to be exactly one equivalence class of solutions with $\Omega $ not in  $\mathscr{A}$ provided that $\epsilon=-1$, whereas there is no such solution if $\epsilon=1$. So we record this as a lemma.

\begin{lemma}\label{ch3 7d submaximal regular lemma}
Suppose $(H_\ell, A)$ is as in \eqref{ch3 7d case H and A formula 1}. If $\epsilon=-1$ then (up to the equivalence in Definition \ref{equivalence of ARMS}) there exists exactly one ARMS $\mathfrak{g}^{0,\mathrm{red}}$ satisfying  the subalgebra property (Definition \ref{subalgebra property})  such that the matrix $\Omega $ is not in $\mathscr{A}$, and if $\epsilon=1$ then there is no such ARMS. In the former case, this equivalence class of reduced modified symbols is represented by any one of the ARMS described by \eqref{ch3 7d case H and A formula 1} and 
\begin{align}\label{ch3 7d case submaximal solution}
\Omega =
e^{i\theta}\left(
\begin{array}{cc}
0 & 0\\
\sqrt{\tfrac{3}{4}} &0
\end{array}
\right)
\, \mbox{ and }\, 
\mathscr{A}_0=
\mathrm{span}
\left\{
\left(
\begin{array}{cc}
1& 0\\
0 & 3
\end{array}
\right)
\right\}
\, \mbox{for some }\theta\in\mathbb{R}.
\end{align}
\end{lemma}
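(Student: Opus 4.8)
The plan is to solve the system \eqref{system} directly for the pair $(\Omega,\mathscr{A}_0)$ under the normalization $(H_\ell,A)$ of \eqref{ch3 7d case H and A formula 1}, exploiting that for $r=1$ the four relations collapse into explicit $2\times2$ matrix identities with scalar structure constants $\eta_\alpha,\mu\in\mathbb{C}$. First I would compute the algebra $\mathscr{A}$ of \eqref{intersection algebra} for this $(H_\ell,A)$; since here $AH_\ell^{-1}=H_\ell\overline{A}=A$, relations \eqref{firstalgebra} and \eqref{secondalgebra} force $\mathscr{A}$ to be the space of diagonal matrices. In particular the hypothesis $\Omega\notin\mathscr{A}$ means precisely that $\Omega$ has a nonzero off-diagonal entry, which is what makes the case nontrivial.

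Next I would impose relation (iii) of \eqref{system}. Writing $\Omega=\left(\begin{smallmatrix} p & q \\ s & t\end{smallmatrix}\right)$, a direct evaluation of $\Omega^TH_\ell\overline{A}+H_\ell\overline{A}\,\Omega$ shows it lies in $\mathrm{span}\{H_\ell\overline{A}\}=\mathrm{span}\{A\}$ only if $q=0$, and it then fixes $\mu=2p$. Thus $\Omega$ must be lower triangular, and the condition $\Omega\notin\mathscr{A}$ becomes $s\neq0$. Feeding this into relation (ii): relation (i) gives $\eta_\alpha=2a$ for $\alpha=\mathrm{diag}(a,d)\in\mathscr{A}_0$, and the lower-left entry of $[\alpha,\Omega]-\eta_\alpha\Omega$ equals $(d-3a)s$; demanding that this bracket lie in the diagonal space $\mathscr{A}_0$ forces $d=3a$ (using $s\neq0$), so $\mathscr{A}_0\subseteq\mathrm{span}\{\mathrm{diag}(1,3)\}$.

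The decisive step, and the one I expect to be the main obstacle, is relation (iv), since this is where $\epsilon$ enters and selects the admissible sign. I would compute $H_\ell^{-1}\Omega^TH_\ell=\left(\begin{smallmatrix} p & \epsilon s \\ 0 & t\end{smallmatrix}\right)$, form the bracket $[\overline{H_\ell^{-1}\Omega^TH_\ell},\Omega]$, and add $A\overline{A}=A$ together with the $\mu$-terms. Vanishing of the off-diagonal entries (needed so the expression lands in the diagonal $\mathscr{A}_0$) forces $t=3p$, and matching the two diagonal entries to a common multiple of $\mathrm{diag}(1,3)$ yields the single scalar constraint $4\epsilon|s|^2=-3$, i.e. $|s|^2=-3\epsilon/4$. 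Because $s\neq0$ requires $|s|^2>0$, this is solvable exactly when $\epsilon=-1$, giving $|s|^2=\tfrac34$, and is impossible for $\epsilon=1$ -- which is precisely the claimed dichotomy.

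Finally I would extract the equivalence class. Since the splitting \eqref{modified symbol decomposition a} is not canonical, $\Omega$ is determined only modulo $\mathscr{A}_0$, so the term $p\,\mathrm{diag}(1,3)$ in $\Omega=p\,\mathrm{diag}(1,3)+\left(\begin{smallmatrix} 0 & 0 \\ s & 0\end{smallmatrix}\right)$ (using $t=3p$) can be absorbed, normalizing $p=0$; the resulting diagonal part of (iv) is then $\tfrac14\,\mathrm{diag}(1,3)\neq0$, which rules out the degenerate choice $\mathscr{A}_0=\{0\}$ and pins down $\mathscr{A}_0=\mathrm{span}\{\mathrm{diag}(1,3)\}$. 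Writing $s=e^{i\theta}\sqrt{3/4}$ then produces exactly the representative \eqref{ch3 7d case submaximal solution}, and I would conclude by noting that the residual phase $\theta$ is absorbed by an admissible diagonal change of basis of $\mathfrak{g}_{-1}$ preserving $H_\ell$, $A$, the bi-grading, and commuting with $\iota$, so all solutions form a single class in the sense of Definition \ref{equivalence of ARMS}. The consistency of the structure constants $\eta_\alpha,\mu$ obtained along the way certifies, via Lemma \ref{system of conditions for homogeneity lemma}, that the resulting ARMS genuinely satisfies the subalgebra property.
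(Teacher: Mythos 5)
Your route is essentially the paper's: compute $\mathscr{A}$ (diagonal matrices), use item (iii) of \eqref{system} to force $\Omega_{1,2}=0$ and $\mu=2\Omega_{1,1}$, extract the dichotomy from the requirement that the item-(iv) matrix be diagonal and proportional to $\mathrm{diag}(1,3)$, and absorb the residual phase $\theta$ by the scaling subgroup \eqref{ch3 7d scaling group}. Two of your reorderings are actually improvements: you derive $\mathscr{A}_0\subseteq\mathrm{span}\{\mathrm{diag}(1,3)\}$ directly from items (i)--(ii) before touching (iv) (the paper gets $\Omega_{2,2}=3\Omega_{1,1}$ from (iv) first and then feeds the resulting $\alpha$ back into (i)--(ii)), and your constraint $4\epsilon|s|^2=-3$ comes from the ratio condition $(2,2)=3\cdot(1,1)$, which is valid whether $\mathscr{A}_0$ is zero or not; so your proof of nonexistence for $\epsilon=1$ is complete and clean.

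There is, however, a genuine gap in your final paragraph, and it is circular as written. You normalize $p=0$ by absorbing $p\,\mathrm{diag}(1,3)$ into $\mathscr{A}_0$, and then use the resulting nonzero value $\tfrac14\mathrm{diag}(1,3)$ of the item-(iv) matrix to ``rule out'' $\mathscr{A}_0=\{0\}$; but the absorption is available only once $\mathrm{diag}(1,3)\in\mathscr{A}_0$ is already known. Moreover, the degenerate branch is not empty: with $\epsilon=-1$ and $\mathscr{A}_0=0$, items (i)--(ii) are vacuous and item (iv) demands that the matrix vanish identically, which by your own formulas has the solutions $t=3p$, $|s|^2=\tfrac34$, $|p|^2=\tfrac1{16}$. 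These tuples solve \eqref{system} with $\Omega\notin\mathscr{A}$, so the uniqueness claim cannot be finished by dismissing $\mathscr{A}_0=\{0\}$. The lemma nevertheless survives because any such ARMS is a subalgebra of the one in \eqref{ch3 7d case submaximal solution}: writing $\Omega=p\,\mathrm{diag}(1,3)+\bigl(\begin{smallmatrix}0&0\\ s&0\end{smallmatrix}\bigr)$, its $\mathfrak{g}_{0,\pm}^{\mathrm{red}}$ generators are those of the larger ARMS shifted by elements of the larger $\mathfrak{g}_{0,0}^{\mathrm{red}}$, so the two have the same completion and are equivalent in the sense of Definitions \ref{ARMS completion} and \ref{equivalence of ARMS} --- but this step must be made explicitly. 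It is worth noting that the paper's own proof passes over the same branch: after imposing $\epsilon=-1$ and $|\Omega_{2,1}|^2=\tfrac34$ in \eqref{ch3 7d case regular alpha simplified}, the coefficient appearing in \eqref{ch3 7d case regular intersection} should be $\tfrac14-4|\Omega_{1,1}|^2$ rather than $-4|\Omega_{1,1}|^2-\tfrac14$, and it vanishes exactly on the solutions above, so the division performed there requires the same completion argument that your proof is missing.
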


\vspace{5pt}
\setlength\tabcolsep{2pt}

\noindent
\scalebox{.87}{
\begin{threeparttable}
\begin{tabular}{|c|c|c|c|c|c|c|}
\hline
label & $H_\ell$&$A$&$\Omega$&\parbox{2cm}{\centering matrices spanning $\mathscr{A}$} & \To{5}\Bo{4}\parbox{1.6cm}{\centering symmetry group dimension} & \To{5}\Bo{4}\parbox{3.4cm}{\centering coordinate descriptions} \\\hline\hline
\To{5}\Bo{3.6} Type \Rmnum{1} & 
\bgroup
\def\arraystretch{1}
$ \left(
\begin{array}{cc}
0&1\\
1&0
\end{array}
\right)$
\egroup
&
\bgroup
\def\arraystretch{1}
$ \left(
\begin{array}{cc}
0&i\\
1&0
\end{array}
\right)$
\egroup
&
\bgroup
\def\arraystretch{1}
$\left(
\begin{array}{cc}
0&\frac{i}{\sqrt{2}}\\
\frac{1}{\sqrt{2}}&0
\end{array}
\right)$
\egroup
&
$0$
&
$8$
&
\To{3}\Bo{3}\parbox{3.4cm}{\flushleft defining equation to appear in \cite{gregorovic2023defining}}
\\\hline
\To{5}\Bo{3.6} Type \Rmnum{2} &
\bgroup
\def\arraystretch{1}
$ \left(
\begin{array}{cc}
0&1\\
1&0
\end{array}
\right)$
\egroup
&
\bgroup
\def\arraystretch{1}
$  
\left(
\begin{array}{cc}
1&1\\
0&1
\end{array}
\right)
$
\egroup
&
\bgroup
\def\arraystretch{1}
$
\left(
\begin{array}{cc}
1&1/2\\
0&0
\end{array}
\right)
$
\egroup
&
$0$
&
$8$
&
\To{1}\Bo{1}\parbox{3.4cm}{\flushleft \cite[Theorem 2, eqn. 1]{mozey2000}}
\\\hline
\To{5}\Bo{3.6} Type \Rmnum{3} &
\bgroup
\def\arraystretch{1}
$ \left(
\begin{array}{cc}
1&0\\
0&-1
\end{array}
\right)$
\egroup
&
\bgroup
\def\arraystretch{1}
$ \left(
\begin{array}{cc}
1&0\\
0&0
\end{array}
\right)$
\egroup
&
\bgroup
\def\arraystretch{1}
$\left(
\begin{array}{cc}
0&0\\
\frac{\sqrt{3}}{2}&0
\end{array}
\right)$
\egroup
&
\bgroup
\def\arraystretch{1}
$
\left(
\begin{array}{cc}
a&0\\
0&3a
\end{array}
\right)
$
\egroup
&
$9$
&
\To{3}\Bo{3}\parbox{3.4cm}{\flushleft defining equation to appear in \cite{gregorovic2023defining}}
\\\hline
\To{3}Type \Rmnum{4}.A ($\epsilon=1$) &
\multirow{2}*{
\bgroup
\def\arraystretch{1}
$ \left(
\begin{array}{cc}
1&0\\
0&\epsilon
\end{array}
\right)$
\egroup
}
&
\multirow{2}*{
\bgroup
\def\arraystretch{1}
$  \left(
\begin{array}{cc}
1&0\\
0&0
\end{array}
\right)$
\egroup
}
&
\multirow{2}*{
$0$
}
&
\multirow{2}*{
\bgroup
\def\arraystretch{1}
$
\left(
\begin{array}{cc}
a&0\\
0&b
\end{array}
\right)
$
\egroup
}
& \multirow{2}*{
10
}
& \multirow{2}*{
\To{1}\Bo{1}\parbox{3cm}{\flushleft \cite[Theorem 2, eqn. 8]{mozey2000}}
}
\\\cdashline{1-1}
\Bo{2}Type \Rmnum{4}.B ($\epsilon=-1$) &&&&&&
\\\hline
\To{3}Type \Rmnum{5}.A ($\epsilon=1$) &
\multirow{2}*{
\bgroup
\def\arraystretch{1}
$ \left(
\begin{array}{cc}
1&0\\
0&\epsilon
\end{array}
\right)$
\egroup
}
&
\multirow{2}*{
\bgroup
\def\arraystretch{1}
$  \left(
\begin{array}{cc}
1&0\\
0&1
\end{array}
\right)$
\egroup
}
&
\multirow{2}*{
$0$
}
&
\multirow{2}*{
\bgroup
\def\arraystretch{1}
$
\left(
\begin{array}{cc}
a&b\\
-\epsilon b&a
\end{array}
\right)
$
\egroup
}
& \multirow{2}*{
15
}
&
 \multirow{2}*{
\parbox{3.2cm}{\flushleft tubes over null cones of symmetric forms in $\mathbb{R}^4$}
}
\\\cdashline{1-1}
\Bo{2.1}Type \Rmnum{5}.B ($\epsilon=-1$) &&&&&&
\\\hline
\To{5}\Bo{3.6} Type \Rmnum{6} &\bgroup
\def\arraystretch{1}
$ \left(
\begin{array}{cc}
0&1\\
1&0
\end{array}
\right)$
\egroup
&
\bgroup
\def\arraystretch{1}
$ \left(
\begin{array}{cc}
0&-1\\
1&0
\end{array}
\right)$
\egroup
&
$0$
&
$
\left(
\begin{array}{cc}
a&b\\
 b&a
\end{array}
\right)
$
&
$15$
&
\parbox{3.2cm}{\flushleft 
\cite[Section 5.4]{gregorovic2021equivalence}
}
\\\hline
\To{5}\Bo{3.6} Type \Rmnum{7} &\bgroup
\def\arraystretch{1}
$ \left(
\begin{array}{cc}
0&1\\
1&0
\end{array}
\right)$
\egroup
&
\bgroup
\def\arraystretch{1}
$ \left(
\begin{array}{cc}
0&1\\
0&0
\end{array}
\right)$
\egroup
&
$0$
&
$
\left(
\begin{array}{cc}
a&b\\
 0&c
\end{array}
\right)
$
&
$16$
&
\To{4.2}\Bo{3.5}\parbox{3.4cm}{\flushleft \cite[Theorem 2, eqn. 8]{mozey2000},  \cite[Section 5, e.g. 1 with $n=3$]{labovskii1997dimensions}}
\\\hline
\end{tabular}
\caption{Flat structures of Theorem \ref{ch3 main 7d theorem a} described in the notation of Lemma \ref{matrix representation lemma}. The letters $a$, $b$, and $c$ denote complex variables. The last column gives references to known coordinate descriptions of the respective flat structures. Some of these references describe real hypersurfaces in $\mathbb{R}^4$, and it is rather the tube over this real hypersurface that has the relevant CR structure.} \label{main theorem table}
\end{threeparttable}
}

\begin{proof}
Notice that $\mathscr{A}$ is the space of all $2\times 2$ diagonal matrices, and item (iii) in \eqref {system} implies $\Omega_{1,2}=0$. With $\Omega_{1,2}=0$, we get
\begin{align}\label{ch3 7d case regular Omega Gramian commutator}
\left[\overline{H_\ell^{-1}\Omega^T H_\ell},\Omega \right]=
\left(
\begin{array}{cc}
\epsilon|\Omega_{2,1}|^2& \epsilon\overline{\Omega_{2,1}}(\Omega_{2,2}-\Omega_{1,1})\\ 
 \Omega_{2,1}(\overline{\Omega_{2,2}-\Omega_{1,1}})&-\epsilon|\Omega_{2,1}|^2
\end{array}
\right).
\end{align}
The coefficient $\mu_{1,1}^1$ in item (iii) of \eqref {system} is equal to $2\Omega_{1,1}$, and hence, by \eqref{ch3 7d case regular Omega Gramian commutator}, labeling the matrix $\left[\overline{H_\ell^{-1}\Omega ^T H_\ell},\Omega  \right]+A\overline{A}-\left(\overline{\mu_{1,1}^1 }\Omega  +\mu_{1,1}^1  \overline{H_\ell^{-1}\Omega ^T H_\ell}\right) $ in item  (iv) of \eqref {system}  $\alpha$, we have 
\begin{align}\label{ch3 7d case regular item 4 matrix}
\alpha =
\left(
\begin{array}{cc}
\epsilon|\Omega_{2,1}|^2-4|\Omega_{1,1}|^2+1& \epsilon\overline{\Omega_{2,1}}(\Omega_{2,2}-3\Omega_{1,1})\\
 \Omega_{2,1}(\overline{\Omega_{2,2}-3\Omega_{1,1}})&a
\end{array}
\right),
\end{align}
where
\[
a=-\epsilon|\Omega_{2,1}|^2-2\left(\overline{\Omega_{1,1}}\Omega_{2,2}+\Omega_{1,1}\overline{\Omega_{2,2}}\right).
\]
By  item  (iv) of \eqref {system}, $\alpha$ belongs to  $\mathscr{A}$, and is therefore diagonal. Since we are searching for a solution with $\Omega $ not in $\mathscr{A}$, we can assume that $\Omega_{2,1}\neq 0$, and hence setting the off-diagonal entries in \eqref{ch3 7d case regular item 4 matrix} equal to zero yields
\begin{align}\label{ch3 7d case regular Omega simplification}
\Omega_{2,2}=3\Omega_{1,1}. 
\end{align}
Accordingly
\begin{align}\label{ch3 7d case regular alpha simplified}
\alpha
=
\left(
\begin{array}{cc}
\epsilon|\Omega_{2,1}|^2-4|\Omega_{1,1}|^2+1& 0\\
 0&-\epsilon|\Omega_{2,1}|^2-12|\Omega_{1,1}|^2
\end{array}
\right).
\end{align}
Evaluating item (i) in \eqref {system} with $\alpha$ given by \eqref{ch3 7d case regular alpha simplified}, we obtain $\eta_{\alpha,1}^1=2\epsilon|\Omega_{2,1}|^2-8|\Omega_{1,1}|^2+2$, and since, noting $\Omega_{1,2}=0$, 
\[
[\alpha,\Omega ]=
\left(
\begin{array}{cc}
0& 0\\
\left(-2\epsilon|\Omega_{2,1}|^2-8|\Omega_{1,1}|^2-1\right) \Omega_{2,1}&0
\end{array}
\right), 
\]
the $(2,1)$ entry of $[\alpha,\Omega ]-\eta_{\alpha,1}^1 \Omega $ is equal to
\begin{align}\label{ch3 7d case regular item 2}
\left([\alpha,\Omega ]-\eta_{\alpha,1}^1 \Omega \right)_{2,1}=-\left(4\epsilon|\Omega_{2,1}|^2+3\right) \Omega_{2,1}.
\end{align}
By item (ii) in \eqref {system}, $[\alpha,\Omega ]-\eta_{\alpha,1}^1 \Omega $ belongs to $\mathscr{A}$, and hence $\left([\alpha,\Omega ]-\eta_{\alpha,1}^1 \Omega \right)_{2,1}=0$. If $\epsilon=1$ then we have obtained a contradiction because then the value in \eqref{ch3 7d case regular item 2} is nonzero. Accordingly, if $\Omega \not\in\mathscr{A}$ then $\epsilon=-1$.  Setting \eqref{ch3 7d case regular item 2} equal to zero with $\epsilon=-1$, we get
\begin{align}\label{ch3 7d case regular simplified omega final}
|\Omega_{2,1}|^2=\frac{3}{4}.
\end{align}
By \eqref{ch3 7d case regular alpha simplified} and \eqref{ch3 7d case regular simplified omega final}
\[
\alpha
=
\left(
\begin{array}{cc}
-4|\Omega_{1,1}|^2-\tfrac{1}{4}& 0\\
 0&3\left(-4|\Omega_{1,1}|^2-\tfrac{1}{4}\right)
\end{array}
\right),
\]
and hence
\begin{align}\label{ch3 7d case regular intersection}
\mathrm{span}\left\{
\frac{\alpha}{-4|\Omega_{1,1}|^2-\tfrac{1}{4}}
\right\}
=
\mathrm{span}\left\{
\left(
\begin{array}{cc}
1& 0\\
 0&3
\end{array}
\right)
\right\}
\subset
\mathscr{A}_0.
\end{align}
Notice that $I$ is not in $\mathscr{A}_0$ because then items (i) and (ii) of \eqref {system} would imply that $\Omega $ is in $\mathscr{A}_0$, so equality actually holds in \eqref{ch3 7d case regular intersection}, which together with \eqref{ch3 7d case regular simplified omega final} implies that \eqref{ch3 7d case H and A formula 1} and \eqref{ch3 7d case submaximal solution} indeed give a solution to the system \eqref {system}.

Lastly, we need to show that changing the parameter $\theta$ in \eqref{ch3 7d case submaximal solution} does not change the equivalence class represented by the corresponding reduced modified CR symbol. To see this last observation, consider the $1$-parameter subgroup
\begin{align}\label{ch3 7d scaling group}
\left\{
\left.
\left(
\begin{array}{cc}
e^{it}I &0\\
0&e^{-it}I
\end{array}
\right)
\,\right|\,
t\in\mathbb{R}
\right\}
\end{align}
of $CSp(\mathfrak{g}_{-1})$. This subgroup belongs to the group $\Re G_{0,0}=G_{0,0}\cap \Re G$ (where $G$ and $G_{0,0}$ are as in Section \ref{preliminaries}) and it acts transitively (via the natural adjoint action) on the set of reduced modified symbols parameterized by $\theta$ described by \eqref{ch3 7d case H and A formula 1} and \eqref{ch3 7d case submaximal solution}, giving isomorphisms between these ARMS establishing their equivalence in the sense of Definition \ref{equivalence of ARMS}.
\end{proof}

\begin{lemma}
If $(H_\ell, A)$ is as in  \eqref{ch3 7d case H and A formula 2}, \eqref{ch3 7d case H and A formula 3}  or  \eqref{ch3 7d case H and A formula 4} then (up to the equivalence in Definition \ref{equivalence of ARMS}) the only corresponding ARMS satisfying the subalgebra property (Definition \ref{subalgebra property})  is the one described by taking $\mathscr{A}_0=\mathscr{A}$ and taking $\Omega $ to be any matrix in $\mathscr{A}$.
\end{lemma}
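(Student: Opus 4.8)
The plan is to reduce the entire statement to condition (iii) of the system \eqref{system}, in contrast with the proof of Lemma \ref{ch3 7d submaximal regular lemma}, where conditions (ii) and (iv) were decisive. As recorded at the beginning of Section \ref{main theorem proof subsection a}, each pair $(H_\ell,A)$ in \eqref{ch3 7d case H and A formula 2}, \eqref{ch3 7d case H and A formula 3}, and \eqref{ch3 7d case H and A formula 4} is regular, so the regular symbol $\mathfrak{g}^0$ obtained by taking $\mathscr{A}_0=\mathscr{A}$ and $\Omega\in\mathscr{A}$ is itself an ARMS satisfying the subalgebra property by Remark \ref{regular flat structures}. Any ARMS with $\Omega\in\mathscr{A}$ and (as always) $\mathscr{A}_0\subseteq\mathscr{A}$ is then a subalgebra of this $\mathfrak{g}^0$ with the same Levi kernel dimension; its completion is therefore $\mathfrak{g}^0$, and so it is equivalent to $\mathfrak{g}^0$ in the sense of Definition \ref{equivalence of ARMS}. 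Thus it suffices to prove that \emph{no} ARMS satisfying the subalgebra property has $\Omega\notin\mathscr{A}$ in these three cases.

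To do this I would set $B:=H_\ell\overline{A}$ and rewrite condition (iii) of \eqref{system} (with $r=1$ and $\mu=\mu_{1,1}^1$) as the single linear equation $\Omega^T B+B\Omega=\mu B$, then solve it entrywise for $\Omega=(\Omega_{i,j})$ in each normal form. One computes $B=\mathrm{diag}(1,\epsilon)$ for \eqref{ch3 7d case H and A formula 2}, $B=\mathrm{diag}(1,-1)$ for \eqref{ch3 7d case H and A formula 4}, and the rank-one matrix $B=\mathrm{diag}(0,1)$ for \eqref{ch3 7d case H and A formula 3}. In each case the equation $\Omega^T B+B\Omega=\mu B$ pins the entries of $\Omega$ to exactly the linear conditions defining the algebra $\mathscr{A}$ recorded in Table \ref{main theorem table} (of type \Rmnum{5} for \eqref{ch3 7d case H and A formula 2}, of type \Rmnum{6} for \eqref{ch3 7d case H and A formula 4}, and of type \Rmnum{7} for \eqref{ch3 7d case H and A formula 3}): one obtains $\Omega_{2,1}=-\epsilon\,\Omega_{1,2}$ and $\Omega_{2,2}=\Omega_{1,1}$ in the first case, $\Omega_{2,1}=\Omega_{1,2}$ and $\Omega_{2,2}=\Omega_{1,1}$ in the second, and $\Omega_{2,1}=0$ in the third. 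In other words condition (iii) alone already forces $\Omega\in\mathscr{A}$, which by the first paragraph finishes the argument.

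The reason these three cases are genuinely easier than \eqref{ch3 7d case H and A formula 1} — and the only point I expect to require care — is that here the solution set of the single linear condition (iii) coincides with $\mathscr{A}$, whereas for \eqref{ch3 7d case H and A formula 1} condition (iii) only yields $\Omega_{1,2}=0$ and leaves $\Omega_{2,1}$ free, so that the exceptional solution there had to be excluded or produced using the finer conditions (ii) and (iv). The main subtlety thus lies in case \eqref{ch3 7d case H and A formula 3}, where $B$ is rank-deficient: one must verify that $\Omega^T B+B\Omega=\mu B$ still forces the off-diagonal entry $\Omega_{2,1}$ to vanish rather than leaving it unconstrained. This holds because $\Omega_{2,1}$ occupies both off-diagonal slots of the symmetric matrix $\Omega^T B+B\Omega$ while the right-hand side $\mu B$ is diagonal, so both off-diagonal equations read $\Omega_{2,1}=0$. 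Once this is checked, the conclusion $\Omega\in\mathscr{A}$ holds in all three cases and the equivalence bookkeeping of the first paragraph completes the proof.
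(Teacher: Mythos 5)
Your proof is correct and takes essentially the same route as the paper: the paper likewise disposes of all three cases using item (iii) of \eqref{system} alone, verifying that it forces $\Omega$ to satisfy the defining inclusions \eqref{firstalgebra}--\eqref{secondalgebra} of $\mathscr{A}$ (which is what your entrywise solution of $\Omega^T H_\ell\overline{A}+H_\ell\overline{A}\,\Omega=\mu\, H_\ell\overline{A}$ amounts to), with the equivalence bookkeeping for $\Omega\in\mathscr{A}$ already settled in the preamble of Section \ref{main theorem proof subsection a}. Your careful handling of the rank-deficient case \eqref{ch3 7d case H and A formula 3} matches the paper's observation that (iii) yields $\Omega_{2,1}=0$ while $\mathscr{A}$ is the upper-triangular algebra.
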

\begin{proof}
If $(H_\ell, A)$ is as in  \eqref{ch3 7d case H and A formula 2}, it is easily checked that  item (iii) in \eqref {system} implies that both of the set inclusion conditions in \eqref{intersection algebra} are satisfied by setting $\alpha=\Omega $. In other words, if $(H_\ell, A)$ is as in  \eqref{ch3 7d case H and A formula 2} then $\Omega $ is in $\mathscr{A}$.

This also happens if $(H_\ell, A)$ is as in  \eqref{ch3 7d case H and A formula 4} instead by exactly the same calculation, which is clear because if $(H_\ell, A)$ is as in  \eqref{ch3 7d case H and A formula 4}  then $AH_\ell^{-1}$ and $H_\ell \overline{A}$ are the same in this case as they are in the case where \eqref{ch3 7d case H and A formula 2} holds.

Similarly, if $(H_\ell, A)$ is as in  \eqref{ch3 7d case H and A formula 3} then $\mathscr{A}$ is the space of $2\times 2$ upper-triangular matrices, and item (iii) in \eqref {system} implies $\Omega_{2,1}=0$. In other words, if $(H_\ell, A)$ is as in  \eqref{ch3 7d case H and A formula 3} then again we get that $\Omega $ is in $\mathscr{A}$.
\end{proof}

\subsection{Symbols corresponding to formulas (\ref{ch3 7d case H and A formula 5}) through (\ref{ch3 7d case H and A formula 8})}\label{main theorem proof subsection b}

In each of these cases (i.e., in \eqref{ch3 7d case H and A formula 5} through \eqref{ch3 7d case H and A formula 8}), $\mathscr{A}$ is spanned by the identity matrix $I$. If $I$ is in $\mathscr{A}_0$ then items (i) and (ii) of \eqref {system}  imply that $\Omega $ is in $\mathscr{A}_0$, which contradicts Lemma \ref{regular CR symbol}, so 
\begin{align}\label{ch3 7d case non-reggular intersection}
\mathscr{A}_0=0.
\end{align}

Proceeding, suppose first that $H_\ell$ and $A$ are as in \eqref{ch3 7d case H and A formula 5} and \eqref{ch3 7d case H and A formula 6}. To treat both cases with common formulas, for the case where $H_\ell$ and $A$ are as in \eqref{ch3 7d case H and A formula 5}, we set $\theta=\tfrac{\pi}{2}$ so that $A$ is described by the same formula as in \eqref{ch3 7d case H and A formula 6}. Item (iii) in \eqref {system} implies
\begin{align}\label{ch3 7d case non-reggular omega simplified a}
\Omega_{1,1}=\Omega_{2,2},
\quad 
\Omega_{1,2}=-e^{-i\theta}\Omega_{2,1},
\quad\mbox{ and }\quad
\mu_{1,1}^1 =2\Omega_{1,1}.
\end{align}

Labeling the matrix $\left[\overline{H_\ell^{-1}\Omega ^T H_\ell},\Omega  \right]+A\overline{A}-\left(\overline{\mu_{1,1}^1 }\Omega  +\mu_{1,1}^1  \overline{H_\ell^{-1}\Omega ^T H_\ell}\right) $ in item  (iv) of \eqref {system}  $\alpha$ and applying \eqref{ch3 7d case non-reggular omega simplified a} to simplify $\alpha$, we obtain
\begin{align}\label{ch3 7d case non-reggular item 4 aa}
\alpha_{1,2}=4\Re\left(e^{i\theta}\Omega_{1,1}\overline{\Omega_{2,1}}\right),
\quad\quad
\alpha_{2,1}=-4\Re\left(\Omega_{1,1}\overline{\Omega_{2,1}}\right),
\end{align}
and
\begin{align}\label{ch3 7d case non-reggular item 4 a}
\alpha_{1,1}=e^{i\theta}-4|\Omega_{1,1}|^2+(e^{-i\theta}-e^{i\theta})|\Omega_{2,1}|^2.
\end{align}

Since item (iv) of \eqref {system} gives that $\alpha$ belongs to $\mathscr{A}_0$, by \eqref{ch3 7d case non-reggular intersection}, the values in \eqref{ch3 7d case non-reggular item 4 aa} and \eqref{ch3 7d case non-reggular item 4 a} are  equal to zero. Since $0<\theta<\pi$, setting the values in \eqref{ch3 7d case non-reggular item 4 aa} equal to zero implies $\Omega_{1,1}\overline{\Omega_{2,1}}$, whereas setting   \eqref{ch3 7d case non-reggular item 4 a} equal to zero implies $\Omega_{2,1}\neq 0$. Therefore, $\Omega_{1,1}=0$, and, by \eqref{ch3 7d case non-reggular item 4 a}, the equation $\alpha_{1,1}=0$ simplifies to 
\begin{align}\label{ch3 7d case non-reggular omega simplified b}
\Omega_{1,1}=0
\quad\mbox{ and }\quad
|\Omega_{2,1}|^2=\frac{e^{i\theta}}{e^{i\theta}-e^{-i\theta}}.
\end{align}
Since $0<\theta<\pi$ and $0\leq |\Omega_{2,1}|$, \eqref{ch3 7d case non-reggular omega simplified b} implies that $\theta=\tfrac{\pi}{2}$, and hence the system \eqref {system} is inconsistent if $(H_\ell, A)$ is as in \eqref{ch3 7d case H and A formula 6}, which yields the following result.

\begin{lemma}\label{ch3 7d nonregular nondiagonal lemma}
There are no ARMS satisfying the subalgebra property (Definition \ref{subalgebra property})  corresponding to any of the cases in \eqref{ch3 7d case H and A formula 6}.
\end{lemma}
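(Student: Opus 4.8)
The plan is to note that essentially all of the computation needed for this lemma has already been assembled in the preceding discussion, so that only a short reality argument remains. For $(H_\ell, A)$ as in \eqref{ch3 7d case H and A formula 6}, the analysis culminating in \eqref{ch3 7d case non-reggular omega simplified b} shows that consistency of the system \eqref{system} forces $\Omega_{1,1}=0$ together with
\[
|\Omega_{2,1}|^2 = \frac{e^{i\theta}}{e^{i\theta}-e^{-i\theta}}.
\]
The first step is therefore simply to record that $|\Omega_{2,1}|^2$ is, by definition, a nonnegative real number, so this equation can hold only if its right-hand side is real.

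The second step is to evaluate that right-hand side explicitly. Using $e^{i\theta}-e^{-i\theta}=2i\sin\theta$ and $\sin\theta\neq 0$ (valid since $\theta\in(0,\pi)$), one rewrites
\[
\frac{e^{i\theta}}{e^{i\theta}-e^{-i\theta}} = \frac{e^{i\theta}}{2i\sin\theta} = \frac{1}{2}-\frac{i\cos\theta}{2\sin\theta}.
\]
Its imaginary part $-\tfrac{\cos\theta}{2\sin\theta}$ vanishes if and only if $\cos\theta=0$, i.e.\ if and only if $\theta=\tfrac{\pi}{2}$.

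The final step is to invoke the hypothesis: case \eqref{ch3 7d case H and A formula 6} restricts $\theta$ to $(0,\tfrac{\pi}{2})\cup(\tfrac{\pi}{2},\pi)$, which excludes $\theta=\tfrac{\pi}{2}$. Hence for every admissible $\theta$ the right-hand side above is non-real, the constraint on $|\Omega_{2,1}|^2$ is unsatisfiable, and the system \eqref{system} is inconsistent; thus no ARMS satisfying the subalgebra property corresponds to \eqref{ch3 7d case H and A formula 6}. I do not anticipate any genuine obstacle: the substantive work---solving item (iii) of \eqref{system} for the entries of $\Omega$ and computing the entries of the matrix $\alpha$ in item (iv)---was already carried out in the unified treatment of \eqref{ch3 7d case H and A formula 5} and \eqref{ch3 7d case H and A formula 6}. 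The only point deserving care is the elementary observation that a squared modulus must be real, which pins the unique admissible angle to $\theta=\tfrac{\pi}{2}$, precisely the value that instead yields Type \Rmnum{1}.
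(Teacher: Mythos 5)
Your proof is correct and is essentially the paper's own argument: the paper likewise treats \eqref{ch3 7d case H and A formula 5} and \eqref{ch3 7d case H and A formula 6} in a unified way, derives $\Omega_{1,1}=0$ and $|\Omega_{2,1}|^2=e^{i\theta}/(e^{i\theta}-e^{-i\theta})$ from items (iii) and (iv) of \eqref{system} together with $\mathscr{A}_0=0$, and then concludes from $0<\theta<\pi$ and $|\Omega_{2,1}|\geq 0$ that $\theta=\tfrac{\pi}{2}$, which the parameter range in \eqref{ch3 7d case H and A formula 6} excludes. Your only addition is to spell out the elementary reality computation, $\Im\bigl(e^{i\theta}/(2i\sin\theta)\bigr)=-\cos\theta/(2\sin\theta)$, that the paper leaves implicit in its final sentence.
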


\begin{lemma}\label{ch3 7d maximal non-reggular lemma}
There exists exactly one equivalence class of ARMS $\mathfrak{g}^{0,\mathrm{red}}$ satisfying the subalgebra property  (in the sense of Definition \ref{equivalence of ARMS}) corresponding to the case where $(H_\ell, A)$ is as in \eqref{ch3 7d case H and A formula 5}. This equivalence class of ARMS is represented by any one of the symbols described by \eqref{ch3 7d case H and A formula 5} and 
\begin{align}\label{ch3 7d case maximal non-reggular solution}
\Omega =
e^{i\theta}\left(
\begin{array}{cc}
0 & i\sqrt{\tfrac{1}{2}}\\
\sqrt{\tfrac{1}{2}} &0
\end{array}
\right)
\quad\mbox{ and }\quad
\mathscr{A}_0=
0
\quad\mbox{for some }\theta\in\mathbb{R}.
\end{align}
\end{lemma}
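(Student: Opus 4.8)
The plan is to obtain this lemma as the positive counterpart of Lemma \ref{ch3 7d nonregular nondiagonal lemma}, reusing the computation that immediately precedes it. Since \eqref{ch3 7d case H and A formula 5} is exactly the member $\theta=\tfrac{\pi}{2}$ of the family \eqref{ch3 7d case H and A formula 6}, the relations \eqref{ch3 7d case non-reggular omega simplified a} and \eqref{ch3 7d case non-reggular omega simplified b} already derived there apply verbatim. First I would substitute $\theta=\tfrac{\pi}{2}$ into \eqref{ch3 7d case non-reggular omega simplified b}, observing that the right-hand side $\tfrac{e^{i\theta}}{e^{i\theta}-e^{-i\theta}}$ now evaluates to $\tfrac{i}{2i}=\tfrac12$, a nonnegative real number. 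This is precisely the feature that failed for every other $\theta$ in \eqref{ch3 7d case H and A formula 6} (where it forced the contradiction $\theta=\tfrac{\pi}{2}$) and that makes the present case consistent. Combined with $\Omega_{1,1}=\Omega_{2,2}=0$ and $\Omega_{1,2}=-e^{-i\pi/2}\Omega_{2,1}=i\Omega_{2,1}$ from \eqref{ch3 7d case non-reggular omega simplified a}, this pins $\Omega$ down to $\Omega_{2,1}=\tfrac{1}{\sqrt2}e^{i\theta}$ for a free phase $\theta$, which is exactly the form recorded in \eqref{ch3 7d case maximal non-reggular solution}.

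Next I would confirm that every such tuple genuinely satisfies the whole system \eqref{system}, so that the subalgebra property holds by Lemma \ref{system of conditions for homogeneity lemma}. Because $\mathscr{A}_0=0$ by \eqref{ch3 7d case non-reggular intersection}, items (i) and (ii) need only be checked for $\alpha=0$, where they hold trivially with all coefficients $\eta_{\alpha,1}^{s}=0$; items (iii) and (iv) are exactly the relations solved in the derivation (with $\mu_{1,1}^1=2\Omega_{1,1}=0$ and the item-(iv) matrix equal to $0\in\mathscr{A}_0$). This establishes existence of at least one ARMS with the subalgebra property in the case \eqref{ch3 7d case H and A formula 5}, described by \eqref{ch3 7d case maximal non-reggular solution}.

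It remains to prove that the phase $\theta$ is the only residual freedom and that varying it does not change the equivalence class, which I expect to be the main point. The derivation shows that $(\Omega,\mathscr{A}_0)$ is determined up to the single parameter $\theta=\arg\Omega_{2,1}$, so I would show that all resulting ARMS are equivalent in the sense of Definition \ref{equivalence of ARMS}. As in the closing argument of Lemma \ref{ch3 7d submaximal regular lemma}, the tool is the $1$-parameter subgroup \eqref{ch3 7d scaling group}: its element at parameter $t$ fixes, under the adjoint action, the two diagonal blocks $\Omega$ and $-H_\ell^{-1}\Omega^{T}H_\ell$ of the generator in \eqref{g0plus} and rescales the off-diagonal block $A$ by a unit-modulus factor $e^{2it}$. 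Writing $M_\theta$ for the spanning generator of $\mathfrak{g}_{0,+}^{\mathrm{red}}$, this action therefore sends $M_\theta$ to $e^{-2it}M_{\theta+2t}$.

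The subtle point I would be careful about is that an ARMS isomorphism is only required to carry the one-dimensional space $\mathfrak{g}_{0,+}^{\mathrm{red}}$ onto the target's, hence only up to a nonzero complex scalar; the element \eqref{ch3 7d scaling group} does not fix the canonical matrix $A$, but reading the identity $M_\theta\mapsto e^{-2it}M_{\theta+2t}$ modulo this scalar shows that it maps the symbol with phase $\theta$ onto the span generated by the symbol with phase $\theta+2t$, while preserving $\mathfrak{g}_-$ together with its bi-grading and commuting with $\iota$. As $t$ ranges over $\mathbb{R}$ this acts transitively on the whole $\theta$-family, so there is exactly one equivalence class, completing the proof.
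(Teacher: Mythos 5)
Your proposal is correct and follows essentially the same route as the paper's proof: necessity via the shared computation for cases \eqref{ch3 7d case H and A formula 5}--\eqref{ch3 7d case H and A formula 6} specialized at $\theta=\tfrac{\pi}{2}$ (i.e., \eqref{ch3 7d case non-reggular intersection}, \eqref{ch3 7d case non-reggular omega simplified a}, \eqref{ch3 7d case non-reggular omega simplified b}), a direct check that \eqref{ch3 7d case maximal non-reggular solution} solves \eqref{system}, and transitivity of the scaling subgroup \eqref{ch3 7d scaling group} on the $\theta$-family exactly as in the closing of Lemma \ref{ch3 7d submaximal regular lemma}. Your explicit computation of the adjoint action, $M_\theta\mapsto e^{-2it}M_{\theta+2t}$, and the remark that the ARMS isomorphism need only preserve $\mathfrak{g}_{0,+}^{\mathrm{red}}$ as a subspace, merely spell out details the paper leaves implicit.
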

\begin{proof}

By \eqref{ch3 7d case non-reggular intersection}, \eqref{ch3 7d case non-reggular omega simplified a}, and \eqref{ch3 7d case non-reggular omega simplified b}, if $(H_\ell, A, \Omega , \mathscr{A}_0)$ satisfies \eqref {system} with $(H_\ell, A)$ as in \eqref{ch3 7d case H and A formula 5} then, indeed \eqref{ch3 7d case maximal non-reggular solution} holds. Conversely, if $(H_\ell, A, \Omega , \mathscr{A}_0)$ is as in \eqref{ch3 7d case H and A formula 5} and \eqref{ch3 7d case maximal non-reggular solution} then it is straightforward to check that the system \eqref {system} is consistent.

We finish this proof using the same conclusion as in the proof of Lemma \ref{ch3 7d submaximal regular lemma}. That is,  the $1$-parameter subgroup of $CSp(\mathfrak{g}_{-1})$ given in \eqref{ch3 7d scaling group} acts transitively on the set of reduced modified symbols parameterized by $\theta$ described by \eqref{ch3 7d case H and A formula 5} and \eqref{ch3 7d case maximal non-reggular solution}, providing the Lie algebra isomorphisms that show as $\theta$ varies in \eqref{ch3 7d case maximal non-reggular solution} the corresponding reduced modified CR symbols belong to the same equivalence class (in the sense of Definition \ref{equivalence of ARMS}).
\end{proof}

The following lemmas address the cases in \eqref{ch3 7d case H and A formula 7} and \eqref{ch3 7d case H and A formula 8}.

\begin{lemma}\label{ch3 7d nonregular diagonal lemma}
There are no ARMS satisfying the subalgebra property (Definition \ref{subalgebra property}) corresponding to either of the cases (i.e., $\epsilon=1$ and $\epsilon=-1$) in  \eqref{ch3 7d case H and A formula 7}.
\end{lemma}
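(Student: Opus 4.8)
The plan is to use the fact, recorded at the start of this subsection, that in case \eqref{ch3 7d case H and A formula 7} the algebra $\mathscr{A}$ is spanned by $I$, so that $\mathscr{A}_0=0$. (One checks this directly: with $AH_\ell^{-1}=\mathrm{diag}(1,\epsilon\lambda)$ and $H_\ell\overline{A}=\mathrm{diag}(1,\epsilon\lambda)$, conditions \eqref{firstalgebra} and \eqref{secondalgebra} force the off-diagonal entries of $\alpha$ to vanish and its diagonal entries to agree, using $\lambda\neq1$.) Because $\mathscr{A}_0=0$, items (i) and (ii) of \eqref{system} are vacuous, and items (iii) and (iv) must hold with the memberships in $\mathscr{A}_0$ replaced by the requirement that the relevant matrices vanish identically. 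The whole problem thus reduces to showing that no single matrix $\Omega$ can satisfy both (iii) (for some $\mu_{1,1}^1$) and (iv) at once.

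First I would read off the constraints from item (iii). Since $H_\ell\overline{A}=\mathrm{diag}(1,\epsilon\lambda)$, comparing entries in $\Omega^T H_\ell\overline{A}+H_\ell\overline{A}\,\Omega=\mu_{1,1}^1 H_\ell\overline{A}$ yields $\Omega_{2,2}=\Omega_{1,1}$, $\Omega_{1,2}=-\epsilon\lambda\,\Omega_{2,1}$, and $\mu_{1,1}^1=2\Omega_{1,1}$. This leaves only the two entries $\Omega_{1,1}$ and $\Omega_{2,1}$ free. I would then substitute these relations into the matrix $\beta:=\big[\overline{H_\ell^{-1}\Omega^T H_\ell},\Omega\big]+A\overline{A}-\big(\overline{\mu_{1,1}^1}\,\Omega+\mu_{1,1}^1\,\overline{H_\ell^{-1}\Omega^T H_\ell}\big)$ appearing in item (iv), and impose $\beta=0$.

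The diagonal entries of $\beta$ give two real equations. Subtracting them yields $(1-\lambda^2)\big(2\epsilon|\Omega_{2,1}|^2+1\big)=0$, and since $\lambda>1$ this forces $|\Omega_{2,1}|^2=-\epsilon/2$. For $\epsilon=1$ this is already impossible, which disposes of that case outright. For $\epsilon=-1$ it gives $|\Omega_{2,1}|^2=\tfrac12$, while summing the two diagonal equations gives $|\Omega_{1,1}|^2=(1+\lambda^2)/8$; in particular both $\Omega_{1,1}$ and $\Omega_{2,1}$ are forced to be nonzero.

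The crux, and the only genuinely nontrivial step, is therefore killing the surviving case $\epsilon=-1$, which I would do with the off-diagonal entries of $\beta$. After the substitutions from (iii) the commutator $\big[\overline{H_\ell^{-1}\Omega^T H_\ell},\Omega\big]$ contributes nothing off-diagonal (its off-diagonal entries cancel by scalar commutativity once $\Omega_{2,2}=\Omega_{1,1}$), and the equations $\beta_{1,2}=\beta_{2,1}=0$ simplify to $\lambda\,\overline{\Omega_{1,1}}\,\Omega_{2,1}=\Omega_{1,1}\overline{\Omega_{2,1}}$ and $\lambda\,\Omega_{1,1}\overline{\Omega_{2,1}}=\overline{\Omega_{1,1}}\,\Omega_{2,1}$. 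Writing $z:=\overline{\Omega_{1,1}}\,\Omega_{2,1}$, these read $\lambda z=\overline{z}$ and $\lambda\overline{z}=z$, which combine to $(\lambda^2-1)z=0$, hence $z=0$ since $\lambda>1$. This contradicts $\Omega_{1,1}\neq0$ and $\Omega_{2,1}\neq0$, so \eqref{system} is inconsistent for $\epsilon=-1$ as well. I expect this off-diagonal argument to be the main obstacle, since for $\epsilon=-1$ the diagonal part of (iv) alone admits solutions, and only the Hermitian symmetry linking items (iii) and (iv), together with $\lambda>1$, obstructs a simultaneous solution.
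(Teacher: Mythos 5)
Your proof is correct, and it rests on the same computational skeleton as the paper's: reduce via $\mathscr{A}_0=0$ (the paper's \eqref{ch3 7d case non-reggular intersection}), use item (iii) of \eqref{system} to get $\Omega_{2,2}=\Omega_{1,1}$, $\Omega_{1,2}=-\epsilon\lambda\Omega_{2,1}$, $\mu_{1,1}^1=2\Omega_{1,1}$ (the paper's \eqref{ch3 7d case non-reggular 2 omega simplified a}), and then set the item (iv) matrix to zero; your entries agree with the paper's \eqref{ch3 7d case non-reggular 2 item 4 a} and \eqref{ch3 7d case non-reggular 2 item 4 ab}, and your observation that the commutator $\bigl[\overline{H_\ell^{-1}\Omega^T H_\ell},\Omega\bigr]$ is diagonal after the substitutions checks out. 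Where you genuinely diverge is the endgame. The paper processes the off-diagonal equations first, forming $\tfrac{1}{2}(\alpha_{1,2}+\epsilon\lambda\alpha_{2,1})=\epsilon(\lambda^2-1)\Omega_{1,1}\overline{\Omega_{2,1}}=0$, which forces a case split; to exclude $\Omega_{2,1}=0$ it must invoke Lemma \ref{regular CR symbol} (if $\Omega_{2,1}=0$ then $\Omega$ is a multiple of $I$, hence in $\mathscr{A}$, contradicting non-regularity), and only then do the diagonal equations give $\lambda^2=-1$, killing both signs of $\epsilon$ at once. You instead mine the diagonal first: the difference gives $|\Omega_{2,1}|^2=-\epsilon/2$, disposing of $\epsilon=1$ immediately, and the sum gives $|\Omega_{1,1}|^2=(1+\lambda^2)/8$, so in the surviving case $\epsilon=-1$ both entries are nonzero and the off-diagonal system $\lambda z=\overline{z}$, $\lambda\overline{z}=z$ with $z=\overline{\Omega_{1,1}}\Omega_{2,1}$ forces $z=0$, a contradiction with no case split and no second appeal to the regularity lemma. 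One small caveat: your parenthetical only verifies $\mathscr{A}=\mathrm{span}\{I\}$, and the further step $\mathscr{A}_0=0$ is not automatic --- it is exactly the paper's preliminary argument that $I\in\mathscr{A}_0$ together with items (i) and (ii) of \eqref{system} would force $\Omega\in\mathscr{A}_0\subset\mathscr{A}$, contradicting Lemma \ref{regular CR symbol}; since you cite the fact recorded at the start of the subsection this is legitimate, but it means your route is not entirely free of that lemma, only of its second use.
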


\begin{proof}
Item (iii) in \eqref {system} implies
\begin{align}\label{ch3 7d case non-reggular 2 omega simplified a}
\Omega_{1,1}=\Omega_{2,2},
\quad 
\Omega_{1,2}=-\epsilon\lambda \Omega_{2,1},
\quad\mbox{ and }\quad
\mu_{1,1}^1 =2\Omega_{1,1}.
\end{align}
Labeling the matrix $\left[\overline{H_\ell^{-1}\Omega ^T H_\ell},\Omega  \right]+A\overline{A}-\left(\overline{\mu_{1,1}^1 }\Omega  +\mu_{1,1}^1  \overline{H_\ell^{-1}\Omega ^T H_\ell}\right) $ in item  (iv) of \eqref {system}  $\alpha$ and applying \eqref{ch3 7d case non-reggular 2 omega simplified a} to simplify $\alpha$, we obtain
\begin{align}\label{ch3 7d case non-reggular 2 item 4 a}
\alpha_{1,2}=2\epsilon(\lambda\overline{\Omega_{1,1}}\Omega_{2,1}-\Omega_{1,1}\overline{\Omega_{2,1}})
\quad\mbox{ and }\quad
\alpha_{2,1}=-2(\overline{\Omega_{1,1}}\Omega_{2,1}-\lambda\Omega_{1,1}\overline{\Omega_{2,1}})
\end{align}
and
\begin{align}\label{ch3 7d case non-reggular 2 item 4 ab}
\alpha_{1,1}=1-4|\Omega_{1,1}|^2+\epsilon(1-\lambda^2)|\Omega_{2,1}|^2
\,\mbox{ and }\,  
\alpha_{2,2}=\lambda^2-4|\Omega_{1,1}|^2-\epsilon(1-\lambda^2)|\Omega_{2,1}|^2.
\end{align}
Since item (iv) of \eqref {system} gives that $\alpha$ belongs to $\mathscr{A}_0$, by \eqref{ch3 7d case non-reggular intersection}, the values in \eqref{ch3 7d case non-reggular 2 item 4 a} and \eqref{ch3 7d case non-reggular 2 item 4 ab} are equal to zero. Accordingly,
\[
\epsilon(\lambda^2-1)\Omega_{1,1}\overline{\Omega_{2,1}}=\frac{\alpha_{1,2}+\epsilon\lambda \alpha_{2,1}}{2}=0,
\]
which implies that either $\Omega_{1,1}=0$ or $\Omega_{2,1}=0$ because $\lambda^2\neq 1$. If $\Omega_{2,1}=0$ then $\Omega$ is a multiple of the identity, which implies that $\Omega\in\mathscr{A}$, contradicting Lemma \ref{regular CR symbol}. Therefore, $\Omega_{1,1}=0$. Yet if $\Omega_{1,1}=0$, since $\alpha_{1,1}=\alpha_{2,2}=0$, the two equations in  \eqref{ch3 7d case non-reggular 2 item 4 ab}  respectively imply
\[
1=-\epsilon(1-\lambda^2)|\Omega_{2,1}|^2
\quad\mbox{ and }\quad
\lambda^2=\epsilon(1-\lambda^2)|\Omega_{2,1}|^2,
\]
implying $\lambda^2=-1$, contradicting the assumption in \eqref{ch3 7d case H and A formula 7} that $\lambda>1$.
\end{proof}

\begin{lemma}
There exists exactly one equivalence class of ARMS  $\mathfrak{g}^{0,\mathrm{red}}$ satisfying the subalgebra property  (in the sense of Definition \ref{equivalence of ARMS}) corresponding to the case where $(H_\ell, A)$ is as in \eqref{ch3 7d case H and A formula 8}. This equivalence class of reduced modified symbols is represented by any one of the symbols described by \eqref{ch3 7d case H and A formula 8} and 
\begin{align}\label{ch3 7d case maximal non-reggular 3 solution}
\Omega =
e^{i\theta}\left(
\begin{array}{cc}
1 &\tfrac{1}{2}\\
0&0
\end{array}
\right)
\quad\mbox{ and }\quad
\mathscr{A}_0=
0
\quad\mbox{for some }\theta\in\mathbb{R}.
\end{align}

\end{lemma}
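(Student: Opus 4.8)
The plan is to reuse the two-stage strategy of the preceding non-regular cases (Lemmas \ref{ch3 7d maximal non-reggular lemma} and \ref{ch3 7d nonregular diagonal lemma}): solve the system \eqref{system} for the matrix $\Omega$, and then eliminate the remaining free parameter by a residual symmetry argument. Because $(H_\ell,A)$ is as in \eqref{ch3 7d case H and A formula 8}, the algebra $\mathscr{A}$ is spanned by $I$, and the non-regularity guaranteed by Lemma \ref{regular CR symbol} forces $\mathscr{A}_0=0$, as already recorded in \eqref{ch3 7d case non-reggular intersection}. Thus the task reduces to finding all $\Omega$ for which items (iii) and (iv) of \eqref{system} hold with $\mathscr{A}_0=0$, and then proving that the resulting matrices $\Omega$ form a single class in the sense of Definition \ref{equivalence of ARMS}.

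First I would expand item (iii). Writing $H_\ell\overline{A}=\left(\begin{smallmatrix}0&1\\1&1\end{smallmatrix}\right)$ and comparing entries in $\Omega^TH_\ell\overline{A}+H_\ell\overline{A}\,\Omega=\mu_{1,1}^1H_\ell\overline{A}$ gives $\Omega_{2,1}=0$, the relation $2\Omega_{1,2}=\Omega_{1,1}-\Omega_{2,2}$, and $\mu_{1,1}^1=\Omega_{1,1}+\Omega_{2,2}$; in particular $\Omega$ is upper triangular. Next I would assemble the matrix $\alpha=[\overline{H_\ell^{-1}\Omega^TH_\ell},\Omega]+A\overline{A}-(\overline{\mu_{1,1}^1}\,\Omega+\mu_{1,1}^1\overline{H_\ell^{-1}\Omega^TH_\ell})$ of item (iv), substitute the three relations just obtained, and impose $\alpha=0$ (using $\mathscr{A}_0=0$). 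A direct calculation should collapse the two diagonal entries to the single condition $|\Omega_{1,1}+\Omega_{2,2}|^2=1$ and the $(1,2)$ entry to a second real relation among $|\Omega_{1,1}|^2$, $|\Omega_{2,2}|^2$ and $\Re(\Omega_{1,1}\overline{\Omega_{2,2}})$, the $(2,1)$ entry vanishing automatically. Verifying that \eqref{ch3 7d case maximal non-reggular 3 solution} satisfies both relations then establishes existence of the claimed ARMS.

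For uniqueness up to equivalence I would argue, exactly as in the proofs of Lemmas \ref{ch3 7d submaximal regular lemma} and \ref{ch3 7d maximal non-reggular lemma}, that the residual freedom is contained in $\Re G_{0,0}=G_{0,0}\cap\Re G$, and in particular that the scaling subgroup \eqref{ch3 7d scaling group} realizes the phase $\theta$ as an orbit parameter, so that all $\Omega$ of the form \eqref{ch3 7d case maximal non-reggular 3 solution} represent one equivalence class. I expect the main obstacle to lie precisely in this last step. In the \eqref{ch3 7d case H and A formula 5} case the two entrywise conditions already force $\Omega$ into a single phase orbit, whereas here the analogous conditions appear to cut out a strictly larger locus of admissible $\Omega$; since a change of basis acts on $\Omega$ by conjugation and therefore preserves its eigenvalues, conjugation alone will not suffice, and one must combine the stabilizer of the normalized pair $(H_\ell,A)$ inside the conformal symplectic group with the rescaling of the one-dimensional space $\mathfrak{g}_{0,+}^{\mathrm{red}}$ (equivalently, the phase rescaling of $A$ that must be reabsorbed to restore the canonical form). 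Carefully tracking this combined action on the solution locus, and checking that it identifies every admissible $\Omega$ with \eqref{ch3 7d case maximal non-reggular 3 solution}, is the delicate heart of the argument.
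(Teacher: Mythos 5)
Your skeleton is the same as the paper's: $\mathscr{A}_0=0$ comes from \eqref{ch3 7d case non-reggular intersection}, your expansion of item (iii) is exactly the paper's \eqref{ch3 7d case non-reggular 3 omega simplified a} in the equivalent form $\Omega_{2,1}=0$, $\Omega_{1,1}=2\Omega_{1,2}+\Omega_{2,2}$, $\mu_{1,1}^1=\Omega_{1,1}+\Omega_{2,2}$, and the final phase identification via the subgroup \eqref{ch3 7d scaling group} is precisely how the paper concludes, as in Lemmas \ref{ch3 7d submaximal regular lemma} and \ref{ch3 7d maximal non-reggular lemma}. But two things go wrong. A local slip first: the diagonal of $\alpha$ does not collapse to the single condition $|\Omega_{1,1}+\Omega_{2,2}|^2=1$. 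One finds $\alpha_{2,2}=\overline{\alpha_{1,1}}$, so the diagonal is the full complex equation $\alpha_{1,1}=0$: its real part gives $|\mu_{1,1}^1|^2=1$, but its imaginary part gives the extra constraint that $\overline{\Omega_{2,2}}\Omega_{1,2}$ is real --- exactly the fact the paper extracts after \eqref{ch3 7d case non-reggular 3 item 4 d} and then uses to solve $\Omega_{2,2}=\frac{1-4|\Omega_{1,2}|^2}{2\overline{\Omega_{1,2}}}$ in \eqref{ch3 7d case non-reggular 3 item 4 e}. Second, and decisive: you never prove uniqueness. The paper does not pass from a large solution locus to one class by a residual-symmetry argument; it pins the locus down to a \emph{single} phase circle by explicit elimination --- \eqref{ch3 7d case non-reggular 3 item 4 c}, then \eqref{ch3 7d case non-reggular 3 item 4 e}, then the factorized real equation \eqref{ch3 7d case non-reggular 3 item 4 f} forcing $|\Omega_{1,2}|=\tfrac12$ and $\Omega_{2,2}=0$ --- and only then invokes \eqref{ch3 7d scaling group}. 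Your proposed substitute (stabilizer of $(H_\ell,A)$ combined with rescaling of $\mathfrak{g}_{0,+}^{\mathrm{red}}$) cannot close this gap, for the reason you yourself state: since $\mathscr{A}_0=0$ kills all shifts, that action is $\Omega\mapsto\lambda T^{-1}\Omega T$, so the unordered ratio of the eigenvalues of $\Omega$ is an invariant and distinct ratios can never be identified. The whole burden of the lemma is the elimination computation, which your proposal defers.

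Your suspicion that the locus is strictly larger is, however, worth taking seriously rather than waving at. If one redoes the elimination starting from the paper's own expression \eqref{ch3 7d case non-reggular 3 item 4 b}, the coefficient of $|\Omega_{2,2}|^2$ there is $-4$, whereas $-2$ appears in \eqref{ch3 7d case non-reggular 3 item 4 d}; carrying the $-4$ through, the factorization becomes $\left(1-4|\Omega_{1,2}|^2\right)\left(1-|\Omega_{1,2}|^2\right)=0$, and the branch $|\Omega_{1,2}|=1$ yields $\Omega=e^{i\theta}\left(\begin{smallmatrix}1/2&1\\0&-3/2\end{smallmatrix}\right)$, which one can check satisfies items (iii) and (iv) of \eqref{system} exactly (with $\mu_{1,1}^1=-e^{i\theta}$, $\alpha=0$, and with the bracket of the corresponding $\mathfrak{g}_{0,\pm}^{\mathrm{red}}$ generators closing in their span). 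Since its eigenvalue ratio ($-1/3$) differs from that of \eqref{ch3 7d case maximal non-reggular 3 solution} (where one eigenvalue vanishes), no transformation of the kind available here identifies the two. So either this branch must be excluded by an argument that appears in neither your proposal nor the paper's displayed chain, or the count in the lemma needs revisiting. Either way, your plan as written does not prove the statement: it correctly predicts the obstruction but supplies no mechanism to resolve it, and the mechanism it gestures at is provably insufficient.
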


\begin{proof}
Item (iii) in \eqref {system} implies
\begin{align}\label{ch3 7d case non-reggular 3 omega simplified a}
\Omega_{1,1}=2\Omega_{1,2}+\Omega_{2,2},
\quad 
\Omega_{2,1}=0,
\quad\mbox{ and }\quad
\mu_{1,1}^1 =2\left(\Omega_{1,2}+\Omega_{2,2}\right).
\end{align}
Labeling the matrix $\left[\overline{H_\ell^{-1}\Omega ^T H_\ell},\Omega  \right]+A\overline{A}-\left(\overline{\mu_{1,1}^1 }\Omega  +\mu_{1,1}^1  \overline{H_\ell^{-1}\Omega ^T H_\ell}\right) $ in item  (iv) of \eqref {system}  $\alpha$ and applying \eqref{ch3 7d case non-reggular 3 omega simplified a} to simplify $\alpha$, we obtain
\begin{align}\label{ch3 7d case non-reggular 3 item 4 a}
\alpha_{1,2}=-2\left(\Omega_{1,2}\left(\overline{\Omega_{1,2}+\Omega_{2,2}}\right)+\overline{\Omega_{1,2}}\left(3\Omega_{1,2}+\Omega_{2,2}\right)-1\right)
\end{align}
and 
\begin{align}\label{ch3 7d case non-reggular 3 item 4 b}
\alpha_{1,1}&=1-2\overline{\Omega_{2,2}}\left(\Omega_{1,2}+\Omega_{2,2}\right)-2\left( \overline{\Omega_{1,2}+\Omega_{2,2}}\right)\left(2\Omega_{1,2}+\Omega_{2,2}\right)\\
&=\left(1-4\left|\Omega_{1,2}\right|^2\right)-2\left(\overline{\Omega_{2,2}}\Omega_{1,2}+\Omega_{2,2}\overline{\Omega_{1,2}}\right)-4\left|\Omega_{2,2}\right|^2-4\overline{\Omega_{2,2}}\Omega_{1,2}.
\end{align}
Since item (iv) of \eqref{system} gives that $\alpha$ belongs to $\mathscr{A}_0$, by \eqref{ch3 7d case non-reggular intersection}, $\alpha=0$. Setting the value in \eqref{ch3 7d case non-reggular 3 item 4 a} equal to zero is equivalent to
\begin{align}\label{ch3 7d case non-reggular 3 item 4 c}
\Omega_{2,2}\overline{\Omega_{1,2}}+\overline{\Omega_{2,2}}\Omega_{1,2}=1-4\left|\Omega_{1,2}\right|^2.
\end{align}
Setting $\alpha_{1,1}=0$ and applying \eqref{ch3 7d case non-reggular 3 item 4 c} to simplify \eqref{ch3 7d case non-reggular 3 item 4 b}, we obtain
\begin{align}\label{ch3 7d case non-reggular 3 item 4 d}
0&=\left(1-4\left|\Omega_{1,2}\right|^2\right)-2\left(\overline{\Omega_{2,2}}\Omega_{1,2}+\Omega_{2,2}\overline{\Omega_{1,2}}\right)-2\left|\Omega_{2,2}\right|^2-4\overline{\Omega_{2,2}}\Omega_{1,2}\\
&=-\left(1-4\left|\Omega_{1,2}\right|^2\right)-2\left|\Omega_{2,2}\right|^2-4\overline{\Omega_{2,2}}\Omega_{1,2}.
\end{align}
Therefore, $\overline{\Omega_{2,2}}\Omega_{1,2}$ is a real number and \eqref{ch3 7d case non-reggular 3 item 4 c} implies
\begin{align}\label{ch3 7d case non-reggular 3 item 4 e}
\Omega_{2,2}=\frac{1-4\left|\Omega_{1,2}\right|^2}{2\overline{\Omega_{1,2}}}.
\end{align}
Together \eqref{ch3 7d case non-reggular 3 item 4 d} and  \eqref{ch3 7d case non-reggular 3 item 4 e} imply
\[
\frac{\left(1-4\left|\Omega_{1,2}\right|^2\right)^2}{2\left|\Omega_{1,2}\right|^2}=2\left|\Omega_{2,2}\right|^2=-\left(1-4\left|\Omega_{1,2}\right|^2\right)-4\overline{\Omega_{2,2}}\Omega_{1,2}=-3\left(1-4\left|\Omega_{1,2}\right|^2\right),
\]
which is equivalent to 
\begin{align}\label{ch3 7d case non-reggular 3 item 4 f}
0=\left(1-4\left|\Omega_{1,2}\right|^2\right)^2+6\left(1-4\left|\Omega_{1,2}\right|^2\right)\left|\Omega_{1,2}\right|^2=(1-4\left|\Omega_{1,2}\right|^2)(2\left|\Omega_{1,2}\right|^2+1).
\end{align}
By  \eqref{ch3 7d case non-reggular 3 item 4 e} and \eqref{ch3 7d case non-reggular 3 item 4 f},
\begin{align}\label{ch3 7d case non-reggular 3 item 4 g}
\left|\Omega_{1,2}\right|^2=\frac{1}{4}
\quad\mbox{ and }\quad
\Omega_{2,2}=0.
\end{align}

Therefore, noting \eqref{ch3 7d case non-reggular 3 item 4 a} and \eqref{ch3 7d case non-reggular 3 item 4 g}, if $(H_\ell, A, \Omega , \mathscr{A}_0)$ satisfies \eqref {system} with $(H_\ell, A)$ as in \eqref{ch3 7d case H and A formula 8} then \eqref{ch3 7d case maximal non-reggular 3 solution} holds. Conversely, if $(H_\ell, A, \Omega , \mathscr{A}_0)$ is as in \eqref{ch3 7d case H and A formula 8} and \eqref{ch3 7d case maximal non-reggular 3 solution} then it is straightforward to check that the system \eqref {system} is consistent.

We finish this proof using the same conclusion as in the proof of Lemma \ref{ch3 7d submaximal regular lemma}. That is,  the $1$-parameter subgroup of $CSp(\mathfrak{g}_{-1})$ given in \eqref{ch3 7d scaling group} acts transitively on the set of reduced modified symbols parameterized by $\theta$ described by \eqref{ch3 7d case H and A formula 8} and \eqref{ch3 7d case maximal non-reggular solution}, providing the Lie algebra isomorphisms that show as $\theta$ varies in  \eqref{ch3 7d case maximal non-reggular 3 solution} the corresponding reduced modified CR symbols belong to the same equivalence class (in the sense of Definition \ref{equivalence of ARMS}).
\end{proof}

\subsection{Local uniqueness and symmetry group dimensions}\label{main theorem proof subsection c}
For structures not of type \Rmnum{3}, we can establish the symmetry bounds and local uniqueness statements in Theorems \ref{ch3 main 7d theorem a} and \ref{ch3 main 7d theorem b} by applying Theorem \ref{maximally symmetric regular models} (a corollary of \cite[Theorem 3.1]{porter2021absolute}) and Theorem \ref{maximally symmetric recoverable models} (a corollary of \cite[Theorem 6.2]{SYKES2023108850}). For structures of type \Rmnum{3}, however, neither of these previous theorems apply. Indeed Theorem \ref{maximally symmetric regular models} does not apply because the local uniqueness results of \cite{porter2021absolute} apply only to structures that are maximally symmetric relative to their CR symbol, which type \Rmnum{3} structures are not. And Theorem \ref{maximally symmetric recoverable models} applies only to recoverable structures, which again type \Rmnum{3} structures are not.

In more detail, for an ARMS $\mathfrak{g}^{0,\mathrm{red}}$ of type  \Rmnum{1}, \Rmnum{2}, \Rmnum{5}, or \Rmnum{6}, its universal Tanaka prolongation has dimension equal to the symmetry group bound indicated in Table \ref{main theorem table}, a fact that is easily checked by direct calculation, as these prolongations can, for example, even be calculated using a computer algebra system such as Maple. By Lemma \ref{recoverability}, structures of each of these types are recoverable, and hence Theorem \ref{maximally symmetric recoverable models} (a corollary of \cite[Theorem 6.2]{SYKES2023108850}) indeed establishes the symmetry bounds and local uniqueness statements in Theorems \ref{ch3 main 7d theorem a} and \ref{ch3 main 7d theorem b} for structures of types  \Rmnum{1}, \Rmnum{2}, \Rmnum{5}, and \Rmnum{6}.

For types \Rmnum{4}, \Rmnum{5}, \Rmnum{6}, and \Rmnum{7} the symmetry bounds of  Theorem \ref{ch3 main 7d theorem a} where calculated in \cite{porter2021absolute}, using the bi-graded Tanaka prolongation method introduced therein. The local uniqueness statements in Theorem \ref{ch3 main 7d theorem b} for structures of types  \Rmnum{1}, \Rmnum{2}, \Rmnum{5}, and \Rmnum{6} follow from the main result in \cite{porter2021absolute} and a weaker version of this main result sufficient for our present application is stated above in Theorem \ref{maximally symmetric regular models}.

Symmetry bounds and local uniqueness of maximally symmetric type \Rmnum{3} structures are addressed by Lemma \ref{g2 structure theorem}, proven in the next section.

\subsection{Proof of Lemma \ref{g2 structure theorem}}\label{main theorem proof subsection d}
Throughout section \ref{main theorem proof subsection d}, let $(M,H)$ be as in Lemma \ref{g2 structure theorem}, so, in particular, $(M,H)$ is a homogeneous structure of type \Rmnum{3} (as described in Corollary \ref{ch3 main 7d corollary}).

Notice that  type \Rmnum{3} and type \Rmnum{4}.B structures have the same CR symbols. They are, however, distinguished by having non-equivalent modified CR symbols, and are therefore not locally equivalent as CR structures. It is shown in \cite{porter2021absolute} that the flat structure of type \Rmnum{4}.B is the unique structure with its CR symbol whose symmetry group is at least $10$-dimensional, and hence the symmetry group of a type \Rmnum{3} structure is at most $9$-dimensional. On the other hand the flat structure of type \Rmnum{3} is generated by a $9$-dimensional ARMS, so its symmetry group is at least $9$-dimensional. Therefore $9$ is the sharp upper bound for the symmetry group dimension of a type \Rmnum{3} structure.

Let $\mathfrak{g}^{0,\mathrm{red}}$ be the maximal ARMS of type \Rmnum{3} given in Table \ref{main theorem table}. We can assign it a basis $(e_0,\ldots,e_8)$ to this  $9$-dimensional Lie algebra with respect to which it has the  matrix representation $\rho:\mathfrak{g}^{0,\mathrm{red}}\to\mathfrak{gl}_6(\mathbb{C})$ given by
\begin{align}\label{7 dimensional regular submaximal example table}
\rho\left(\sum_{j=0}^8t_je_j\right)=
\left(\begin{array}{cccccc}
2t_7 & -t_3 & t_4 & t_1 & -t_2 & -2t_0 \\
0 & t_7+t_8 & \frac{\sqrt{3}}{2}t_6 & t_5 & 0 & -t_1 \\
0 &  \frac{\sqrt{3}}{2}t_5 & t_7+3t_8 & 0 & 0 & -t_2 \\
0 & t_6 & 0 & t_7-t_8 & \frac{\sqrt{3}}{2}t_5 & -t_3 \\
0 & 0 & 0 & \frac{\sqrt{3}}{2}t_6 & t_7-3t_8 & -t_4 \\
0 & 0 & 0 & 0 & 0 & 0 
\end{array}
\right).
\end{align}
To describe a decomposition of this ARMS as in \eqref{modified symbol decomposition} and \eqref{modified symbol decomposition a}, note that $(e_0, \ldots, e_5)$ forms a basis of the ARMS Heisenberg component as in \eqref{Heisenberg basis}, and  one can take $\mathfrak{g}_{0,+}^{\mathrm{red}}$ and $\mathfrak{g}_{0,-}^{\mathrm{red}}$ to be the subspaces spanned by $e_5$ and $e_6$ respectively. Consequently, $\mathfrak{g}_{0,0}^{\mathrm{red}}$ is spanned by $e_7$ and $e_8$.
Notice that $(2e_5,2e_6,e_8)$ is a standard $\mathfrak{sl}_2$ triple, and hence $\mathfrak{g}^{0,\mathrm{red}}$ is not solvable.

Let $\mathfrak{u}$ denote the standard universal Tanaka prolongation of $\mathfrak{g}^{0,\mathrm{red}}$, calculated with respect to the graded decomposition $\mathfrak{g}^{0,\mathrm{red}}=\mathfrak{g}_{-2}\oplus \mathfrak{g}_{-1}\oplus \mathfrak{g}_0^{\mathrm{red}}$ with $\mathfrak{g}_0^{\mathrm{red}}$ regarded as the degree zero component. Calculating $\mathfrak{u}$ explicitly, one finds from its Killing form and Cartan's criterion that $\mathfrak{u}$ is a semisimple, $14$-dimensional, rank $2$, complex Lie algebra, which from the well known classification of semisimple Lie algebras implies that $\mathfrak{u}$ is isomorphic to the Lie algebra of the exceptional complex Lie group $G_2$.

To relate the well know structure of $\mathrm{Lie}(G_2)$ to our considered subalgebra $\mathfrak{g}^{0,\mathrm{red}}$, consider the $\mathbb{Z}$-graded decomposition
\begin{align}\label{prol alg grading}
\mathfrak{u}=\mathfrak{u}_{-2}\oplus\mathfrak{u}_{-1}\oplus\mathfrak{u}_{0}\oplus\mathfrak{u}_{1}\oplus\mathfrak{u}_{2}
\end{align}
with $\mathfrak{u}_{-2}=\mathfrak{g}_{-2}$, $\mathfrak{u}_{-1}=\mathfrak{g}_{-1}$, and $\mathfrak{u}_{0}=\mathfrak{g}_0^{\mathrm{red}}$. In our chosen basis of $\mathfrak{g}_0^{\mathrm{red}}$, the Cartan subalgebra of $\mathfrak{u}$ is $\mathfrak{u}_{0,0}:=\langle e_7,e_8\rangle$. We describe the root space decomposition of $\mathfrak{u}$ by letting $\mathfrak{u}_{j,k}$ denote the intersection of the eigenspace of $\mathrm{ad}( -e_7)$ with eigenvalue $j$ and the eigenspace of $\mathrm{ad}( e_8)$ with eigenvalue $k$, and have the following root space decomposition diagram of $\mathfrak{u}$ labeled with these components. 
\begin{center}
\hspace{2.2cm}
\begin{tikzpicture}
\node[text width=3cm] at ([shift={(.4,0)}] -2,0) 
    {$\mathfrak{u}_{-2,0}$};
\node[text width=3cm] at ([shift={(.65,.2)}]-1,.9) 
    {$\mathfrak{u}_{-1,3}$};
\node[text width=3cm] at ([shift={(.6,-.2)}]-1,-.9) 
    {$\mathfrak{u}_{-1,-3}$};
\node[text width=3cm] at ([shift={(.6,.15)}]-1,.3) 
    {$\mathfrak{u}_{-1,1}$};
\node[text width=3cm] at ([shift={(.45,-.15)}]-1,-.3) 
    {$\mathfrak{u}_{-1,-1}$};
\node[text width=3cm] at ([shift={(1.1,.25)}] 0,.6) 
    {$\mathfrak{u}_{0,2}$};
\node[text width=3cm] at ([shift={(1.1,-.25)}]0,-.6) 
    {$\mathfrak{u}_{0,-2}$};
\node[text width=3cm] at ([shift={(1.6,.2)}]1,.9) 
    {$\mathfrak{u}_{1,3}$};
\node[text width=3cm] at ([shift={(1.6,-.2)}]1,-.9) 
    {$\mathfrak{u}_{1,-3}$};
\node[text width=3cm] at ([shift={(1.65,.15)}]1,.3) 
    {$\mathfrak{u}_{1,1}$};
\node[text width=3cm] at ([shift={(1.65,-.15)}]1,-.3) 
    {$\mathfrak{u}_{1,-1}$};
\node[text width=3cm] at ([shift={(1.7,0)}] 2,0) 
    {$\mathfrak{u}_{2,0}$};
\draw[dotted] (-3,0) -- (3,0);
\draw[dotted] (-3,.6) -- (3,.6);
\draw[dotted] (-3,-.6) -- (3,-.6);
\draw[dotted] (-1.5,1.2) -- (-1.5,-1.2);
\draw[dotted] (0,1.2) -- (0,-1.2);
\draw[dotted] (1.5,1.2) -- (1.5,-1.2);
\fill[black] (-2,0) circle (0.05 cm);
\fill[black] (-1,.9)  circle (0.05 cm);
\fill[black] (-1,.3) circle (0.05 cm);
\fill[black] (-1,-.3) circle (0.05 cm);
\fill[black] (-1,-.9) circle (0.05 cm);
\fill[black] (0,.6) circle (0.05 cm);
\fill[black] (0,0) circle (0.05 cm);
\fill[black] (0,-.6) circle (0.05 cm);
\fill[black] (1,.9) circle (0.05 cm);
\fill[black] (1,.3) circle (0.05 cm);
\fill[black] (1,-.3) circle (0.05 cm);
\fill[black] (1,-.9) circle (0.05 cm);
\fill[black] (2,0) circle (0.05 cm);
\draw[->,shorten >=2pt, shorten <=2pt](-0,0) -- (1,.9);
\draw[->,shorten >=2pt, shorten <=2pt](-0,0) -- (1,.3);
\draw[->,shorten >=2pt, shorten <=2pt](-0,0) -- (2,0);
\draw[->,shorten >=2pt, shorten <=2pt](-0,0) -- (1,-.3);
\draw[->,shorten >=2pt, shorten <=2pt](-0,0) -- (1,-.9);
\draw[->,shorten >=2pt, shorten <=2pt](-0,0) -- (0,.6);
\draw[->,shorten >=2pt, shorten <=2pt](-0,0) -- (0,-.6);
\draw[->,shorten >=2pt, shorten <=2pt](-0,0) -- (-1,.9);
\draw[->,shorten >=2pt, shorten <=2pt](-0,0) -- (-1,.3);
\draw[->,shorten >=2pt, shorten <=2pt](-0,0) -- (-2,0);
\draw[->,shorten >=2pt, shorten <=2pt](-0,0) -- (-1,-.3);
\draw[->,shorten >=2pt, shorten <=2pt](-0,0) -- (-1,-.9);
\end{tikzpicture}
\end{center}
Here $\mathfrak{u}_{-2,0}=\langle e_0\rangle$, $\mathfrak{u}_{-1,-3}=\langle e_4\rangle$, $\mathfrak{u}_{-1,-1}=\langle e_3\rangle$, $\mathfrak{u}_{-1,1}=\langle e_1\rangle$, $\mathfrak{u}_{-1,3}=\langle e_2\rangle$, $\mathfrak{u}_{0,-2}=\langle e_6\rangle$, and $\mathfrak{u}_{0,2}=\langle e_5\rangle$.

We now need several basic facts about the Tanaka-theoretic prolongation constructions in \cite{SYKES2023108850}. We recall these facts here, and refer the reader to \cite{SYKES2023108850} for a full description of the Tanaka theory.  Geometric Tanaka prolongations $P^1$ and $P^2$ of $P^{0,\mathrm{red}}$ constructed in \cite[Section 9]{SYKES2023108850} are special fiber bundles  fitting into a sequence of fiber bundles
\[
 \mathbb{C} \mathcal{N} \xleftarrow{\pi} \mathbb{C} M \xleftarrow{\mathrm{pr}_0=\mathrm{pr}} P^{0,\mathrm{red}}\xleftarrow{\mathrm{pr}_1} P^1  \xleftarrow{\mathrm{pr}_2} P^2,
\]
where $\mathbb{C} M$ and $\mathbb{C} \mathcal{N}$ denote the complexified CR manifold and Levi leaf space, defined locally in \cite{SYKES2023108850} essentially by just replacing real local coordinates with complex ones. The complex structure on $\mathbb{C} M$ induces an antilinear involution on $\mathfrak{u}$ that extends the involution already defined on $\mathfrak{g}^{0,\mathrm{red}}$. A structure on the Levi leaf space $\mathbb{C} \mathcal{N}$ called a \emph{dynamical Legendrian contact structure (DLC structure)} is introduced in \cite{SYKES2023108850}, and this structure is induced by the CR structure on $\mathbb{C} M$. Symmetries of the DLC structure on $\mathbb{C} \mathcal{N}$ have a naturally induced action on each bundle $P^{0,\mathrm{red}}$, $P^1$, and $P^2$. The prolongation procedure also yields an absolute parallelism on $P^2$ identifying each tangent space in $P^2$ with $\mathfrak{u}$, and the symmetry group of $(\mathbb{C} M,H)$ is embedded in the symmetry group of the DLC structure on $\mathbb{C} \mathcal{N}$ which in turn is embedded in the symmetries of this parallelism via the aforementioned natural action on $P^2$.

Let us fix a set of points $q\in \mathcal{N}$, $\psi_0\in P^{0,\mathrm{red}}$,  and $\psi_1\in P^1$ satisfying
\[
\mathrm{pr}_1(\psi_1)=\psi_0
\quad\mbox{ and }\quad
\mathrm{pr}_1(\psi_0)=q,
\] and let $G$ denote the symmetry group of the complexified CR manifold $(\mathbb{C} M,H)$ with Lie algebra $\mathfrak{g}$. Let $G_q$ be the subgroup of $G$ whose induced action on $\mathbb{C} \mathcal{N}$ fixes the point $q$; let $G_0$ be the subgroup of $G_q$ whose induced action on $P^{0,\mathrm{red}}$ fixes the point $\psi_0$; and let $G_1$ be the subgroup of $G_0$ whose induced action on $P^1$ fixes the point $\psi_1$.  The Tanaka prolongation procedure naturally induces injective linear maps
\[
\mathrm{gr}(\mathfrak{g})_{-}:=\mathfrak{g}/\mathrm{Lie}(G_q)\hookrightarrow\mathfrak{u}_{-2}\oplus \mathfrak{u}_{-1},
\quad
\mathrm{gr}(\mathfrak{g})_{0}:=\mathrm{Lie}(G_q)/\mathrm{Lie}(G_0)\hookrightarrow\mathfrak{u}_{0},
\]
\[
\mathrm{gr}(\mathfrak{g})_{1}:=\mathrm{Lie}(G_0)/\mathrm{Lie}(G_1)\hookrightarrow\mathfrak{u}_{1},
\quad\mbox{ and }\quad
\mathrm{gr}(\mathfrak{g})_{2}:=\mathrm{Lie}(G_1)\hookrightarrow\mathfrak{u}_{2}
\]
such that the graded Lie algebra
\[
\mathrm{gr}(\mathfrak{g}):=\mathrm{gr}(\mathfrak{g})_{-}\oplus \mathrm{gr}(\mathfrak{g})_{0}\oplus \mathrm{gr}(\mathfrak{g})_{1}\oplus \mathrm{gr}(\mathfrak{g})_{2}
\]
obtained from the filtration $\mathfrak{g}\supset \mathrm{Lie}(G_q)\supset \mathrm{Lie}(G_0)\supset \mathrm{Lie}(G_1)$ is mapped homomorphically onto a subalgebra of $\mathfrak{u}$. For notational convenience, let us simply identify $\mathrm{gr}(\mathfrak{g})$ with that subalgebra. The last fact that we will take for granted here, which follows from the construction of the parallelism on $P^2$, is that infinitesimal symmetries on $(\mathbb{C} M, H)$ at a point $o\in \mathbb{C} M$ whose values at $o$ are nonzero are identified (modulo symmetries in $\mathrm{Lie}(G_o)$) in one-to-one correspondence with elements in $\mathfrak{u}_{-2}\oplus \mathfrak{u}_{-1}\oplus \mathfrak{u}_{0,-2}\oplus \mathfrak{u}_{0,2}$, where informally this identification is given by lifting the infinitesimal symmetry to $P^0$ and taking its value at a point in the fiber above $o$.

This last point implies $\mathfrak{u}_{-2}\oplus \mathfrak{u}_{-1}\oplus \mathfrak{u}_{0,-2}\oplus \mathfrak{u}_{0,2}\subset \mathrm{gr}(\mathfrak{g})$. Yet the only $9$-dimensional (complex) subalgebra of $\mathfrak{u}$ having a grading compatible with \eqref{prol alg grading} containing $\mathfrak{u}_{-2}\oplus \mathfrak{u}_{-1}\oplus \mathfrak{u}_{0,-2}\oplus \mathfrak{u}_{0,2}$ is $\mathfrak{u}_{-2}\oplus \mathfrak{u}_{-1}\oplus \mathfrak{u}_{0}$, and hence
\[
\mathrm{gr}(\mathfrak{g})= \mathfrak{u}_{-2}\oplus \mathfrak{u}_{-1}\oplus \mathfrak{u}_{0}= \mathfrak{g}^{0,\mathrm{red}}.
\]
By \cite[Lemma 3]{doubrov1999contact}, $\mathfrak{g}\cong\mathrm{gr}(\mathfrak{g})$, and hence $\mathfrak{g}\cong \mathfrak{g}^{0,\mathrm{red}}$. For any point $o\in P^{0,\mathrm{red}}$, the isotropy subgroup $G_o$ of $G$ fixing $o$ acts freely and transitively on the fiber $P^{0,\mathrm{red}}_o$, and hence the Lie algebra of $G_o$ is exactly the Lie algebra $\mathfrak{g}_{0,0}^{\mathrm{red}}$ of the $P^{0,\mathrm{red}}$ bundle's structure group. The symmetry group $\Re G$ and isotropy subgroup $\Re G_o$  of $(M,H)$ are generated by the real forms $\Re \mathfrak{g}^{0,\mathrm{red}}$ and $\Re\mathfrak{g}_{0,0}^{\mathrm{red}}$  defined as fixed points in $\mathfrak{g}^{0,\mathrm{red}}$ and $\mathfrak{g}_{0,0}^{\mathrm{red}}$ of the involution on $\mathfrak{g}^{0,\mathrm{red}}$ induced by the complex structure on $\mathbb{C} M$.  Since $M= \Re G/ \Re G_0$ and its CR structure descends from the left invariant distribution $\mathfrak{g}_{-1,1}\oplus\mathfrak{u}_{0,0}\oplus \mathfrak{u}_{0,2}$, this shows that $(M,H)$ is locally equivalent to the flat structure generated by $\mathfrak{g}^{0,\mathrm{red}}$.

\section{Extending and linking  ARMS}\label{Extending and linking  ARMS}

In this section we describe two processes, extending ARMS and linking ARMS, by which we can construct new ARMS satisfying the subalgebra property (Definition \ref{subalgebra property}) from others.  We describe all flat structures of dimension at most $11$ that are generated by combinations of ARMS extensions and ARMS links built from the classification in Section \ref{The classification of flat structures in C4 generated by ARMS} of ARMS that generate $7$-dimensional flat structures and the $5$-dimensional flat structure generated by the ARMS encoded by $(H_\ell,A,\Omega,\mathscr{A}_0)$ with $1\times1$ matrices $H_{\ell}=A=1$, $\Omega=0$, and $\mathscr{A}_{0}=\mathbb{C}$ --  which happens to be the only ARMS that generates a model of dimension less than $7$. While this does not account for all ARMS that generate flat structures of dimension at most $11$, it does give a large set of new flat structure examples resulting almost immediately from the ARMS of Theorem \ref{ch3 main 7d theorem a}.

For an ARMS $\left\{H_\ell, A_1,\ldots,A_r, \Omega_1,\ldots,\Omega_r, \mathscr{A}_0 \right\}$ satisfying the subalgebra property (Definition \ref{subalgebra property}), there we two new ARMS called the \emph{positive $2$-dimensional extension} and the \emph{negative $2$-dimensional extension}; these two new ARMS are of the form $\left\{\widetilde{H_\ell}, \widetilde{A_1},\ldots, \widetilde{A_r}, \widetilde{\Omega_1},\ldots, \widetilde{\Omega_r}, \widetilde{\mathscr{A}_0} \right\}$ with
\begin{align}\label{2 dimensional ARMS extension}
\widetilde{H_\ell}=
\left(
\begin{array}{c;{2pt/2pt}c}
H_\ell & 0 \\\hdashline[2pt/2pt]
0& \epsilon
\end{array}
\right),
\,
\widetilde{A_j}=
\left(
\begin{array}{c;{2pt/2pt}c}
A_j & 0 \\\hdashline[2pt/2pt]
0& 0
\end{array}
\right),
\end{align}
\begin{align}\label{2 dimensional ARMS extension a}
\widetilde{\Omega_j}=
\left(
\begin{array}{c;{2pt/2pt}c}
\Omega_j & 0 \\\hdashline[2pt/2pt]
0& 0
\end{array}
\right),\,
\widetilde{\mathscr{A}_0}=
\left\{\left.
\left(
\begin{array}{c;{2pt/2pt}c}
\alpha& 0 \\\hdashline[2pt/2pt]
0& 0
\end{array}
\right)
\right|
\alpha\in\mathscr{A}_0
\right\}
\end{align}
for all $j\in\{1,\ldots, r\}$, where $\epsilon=\pm1$. If $\epsilon=1$ then \eqref{2 dimensional ARMS extension} and \eqref{2 dimensional ARMS extension a}  gives the positive $2$-dimensional extension, whereas $\epsilon=-1$ gives the negative $2$-dimensional extension. It is easily checked that these extensions also satisfy the subalgebra property (Definition \ref{subalgebra property}), thereby generating flat structures themselves, and that, up to a change of sign (positive or negative), these extensions do not depend on the specific matrix representation of the ARMS represented by $\left\{H_\ell, A_1,\ldots,A_r, \Omega_1,\ldots,\Omega_r, \mathscr{A}_0 \right\}$. We call them $2$-dimensional extensions because they generate flat structures whose dimension is two greater than that generated by $\left\{H_\ell, A_1,\ldots,A_r, \Omega_1,\ldots,\Omega_r, \mathscr{A}_0 \right\}$. Repeatedly constructing these $2$-dimensional extensions yields higher dimensional extensions.

\begin{definition}\label{ARMS extension}
For two non-negative integers $p,q\in \mathbb{Z}$ and an ARMS $\mathfrak{g}^{0,\mathrm{red}}$ satisfying the subalgebra property (Definition \ref{subalgebra property}) represented by $\left\{H_\ell, A_1,\right.$ $\left.\ldots,A_r, \Omega_1,\ldots,\Omega_r, \mathscr{A}_0 \right\}$, the \emph{$2(p+q)$-dimensional signature $(p,q)$ extensions of $\mathfrak{g}^{0,\mathrm{red}}$} are the ARMS obtained by applying the positive $2$-dimensional extension to  $\mathfrak{g}^{0,\mathrm{red}}$ in sequence $m$ times for $m\in \{p,q\}$ followed by applying the negative $2$-dimensional extension to  $\mathfrak{g}^{0,\mathrm{red}}$ in sequence $p+q-m$ times.
\end{definition}
The order that the $2$-dimensional extensions are applied in Definition \ref{ARMS extension} does not actually matter, because changing this order yields an equivalent ARMS in the sense of Definition \ref{equivalence of ARMS}. There can be two non-equivalent $2(p+q)$-dimensional signature $(p,q)$ extensions, which correspond to taking $m=p$ or $m=q$ in Definition \ref{ARMS extension}, but it can also happen that either choice $m=p$ or $m=q$ yields the same ARMS.

By applying $2$-dimensional and $4$ dimensional extensions to the ARMS of Theorem \ref{ch3 main 7d theorem a}, we obtain eleven $9$-dimensional flat structures, represented by vertices in the middle ring of the graph in Figure \ref{extending ARMS graph}, and  $19$ $11$-dimensional models, represented vertices in the outer ring of the graph in Figure \ref{extending ARMS graph}. Note also that the two structures of types IV.A and IV.B are the $2$-dimensional ARMS extensions of the unique ARMS that generates a $5$ dimensional flat $2$-nondegenerate structure.

Let us now define \emph{linking of ARMS}  with Levi-kernel dimension $1$. To prepare this we need to introduce compatibility criteria under which two ARMS can be \emph{linked}.

Consider the sesquilinear maps of the form
\[
\Psi\circ \Phi:\mathbb{C}^2\to \mathbb{C}
\]
for which $A$ is the matrix from a tuple
\begin{align}\label{tuple representing ARMS}
\left\{H_\ell, A, \Omega, \mathscr{A}_0 \right\}
\quad\mbox{ (with $A=A_1$ and $\Omega=\Omega_1$)}
\end{align}
representing an ARMS $\mathfrak{g}^{0,\mathrm{red}}$ satisfying the subalgebra property (Definition \ref{subalgebra property}), and with
$
\Psi:  \mathscr{A}_0 \to \mathbb{C}
$
and 
$
\Phi:\mathbb{C}^2\to \mathscr{A}_0 
$
defined as follows. For a  representation as in \eqref{tuple representing ARMS} of an ARMS satisfying the subalgebra property, define 
\begin{align}\label{psi map}
\Psi(\alpha):= \eta_{\alpha,1}^1 
\quad\quad\forall\,\alpha\in\mathscr{A}_0,
\end{align}
where  $\eta_{\alpha,1}^1$ is defined by item $(i)$ in \eqref{system},
and let $\Phi$ be the sesquilinear map given by
\begin{align}\label{phi map}
\Phi(a,b):=\overline{a}b\left(\left[\overline{H_\ell^{-1}\Omega^T H_\ell},\Omega \right]+A\overline{A}-\left(\overline{\mu_{1,1}^1}\Omega +\mu_{1,1}^1 \overline{H_\ell^{-1}\Omega^T H_\ell}\right) \right)
\end{align}
where the $\mu_{1,1}^1$ is the  coefficient defined by item (iii) in \eqref{system}. Notice that $\Phi(1,1)$ is exactly the matrix of item (iv) in \eqref{system}. To see that $\Phi(a,b)$ also belongs to $\mathscr{A}_0$ in general, consider the index $1$ matrix in \eqref{g0minus} rescaled by $\overline{a}$ and the index $1$ matrix in \eqref{g0plus} rescaled by $b$ and compute the Lie bracket of these two matrices. By the subalgebra property (Definition \ref{subalgebra property}), this Lie bracket belongs to $\mathfrak{g}^{\mathrm{red}}_0$, and taking its $\mathfrak{g}_{0,0}^{\mathrm{red}}$ part with respect to \eqref{modified symbol decomposition a}, the upper  left $(n-1)\times (n-1)$ block of this matrix is $\Phi(a,b)$, implying by \eqref{g00} that $\Phi(a,b)\in \mathscr{A}_0$.

The system \eqref{system} can be simplified significantly in our present setting because $r=1$.
\begin{lemma}\label{ARMS normalization}
Let $\mathfrak{g}^{0,\mathrm{red}}$ be an ARMS of Levi kernel dimension $1$ with a representation $\{H_\ell, A,\Omega,\mathscr{A}_0\}$ as in Lemma \ref{matrix representation lemma}. The matrices $A$ and $\Omega$ can be rescaled such that the coefficient $\mu_{1,1}^1$ in \eqref{system} is either $1$ or $0$. Furthermore, if there exists $\alpha\in\mathscr{A}_0 $ for which the $\eta_{\alpha,1}^1$ coefficient in \eqref{system} is nonzero then $\Omega$ can be chosen so that $\mu_{1,1}^1=0$.

Lastly, if $\{H_\ell, A,\Omega,\mathscr{A}_0\}$ can be chosen such that $\mu_{1,1}^1=0$ then $\{H_\ell, A,\Omega,\mathscr{A}_0\}$ can be chosen so that both $\mu_{1,1}^1=0$ and $\Psi\circ\Phi(1,1)\in\{-1,0,1\}$, where, furthermore, this normalized value of $\Psi\circ\Phi(1,1)$ is uniquely determined by $\mathfrak{g}^{0,\mathrm{red}}$.
\end{lemma}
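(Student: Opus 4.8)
The plan is to split the assertion into an existence part (that the magnitude of $\Psi\circ\Phi(1,1)$ can be normalized while keeping $\mu_{1,1}^1=0$) and a well-definedness part (that the resulting value in $\{-1,0,1\}$ is a representation-independent invariant of $\mathfrak{g}^{0,\mathrm{red}}$). Both rest on two facts: that $\Psi\circ\Phi(1,1)$ is always real, and that the freedom remaining after imposing $\mu_{1,1}^1=0$ rescales it by a positive real and otherwise fixes it. For the scaling, recall from the earlier parts of the lemma that rescaling the generator of $\mathfrak{g}_{0,+}^{\mathrm{red}}$ by $c\in\mathbb{C}^{*}$ sends $(A,\Omega)\mapsto(cA,c\Omega)$ and $\mu_{1,1}^1\mapsto c\,\mu_{1,1}^1$; thus, whereas the normalization $\mu_{1,1}^1=1$ uses up this scaling, the hypothesis $\mu_{1,1}^1=0$ leaves all of $\mathbb{C}^{*}$ available. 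With $\mu_{1,1}^1=0$ every term of the matrix \eqref{phi map} (taken at $a=b=1$) scales by $|c|^{2}$, so $\Phi(1,1)\mapsto|c|^{2}\Phi(1,1)$; since $\eta_{\alpha,1}^1$ is unaffected by rescaling $A$ and is linear in $\alpha$, this gives $\Psi\circ\Phi(1,1)\mapsto|c|^{2}\,\Psi\circ\Phi(1,1)$.

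The heart of the argument is the reality of $\Psi\circ\Phi(1,1)$. Writing $X_{+}$ for the generator in \eqref{g0plus} and $X_{-}=\iota(X_{+})$ for that in \eqref{g0minus}, the matrix $\Phi(1,1)$ is, by construction, the upper-left block of the $\mathfrak{g}_{0,0}^{\mathrm{red}}$-component of $Z:=[X_{-},X_{+}]$. Since $\iota$ is an antilinear involution with $\iota([u,v])=[\iota(u),\iota(v)]$ and $\iota(X_{+})=X_{-}$, we get $\iota(Z)=[X_{+},X_{-}]=-Z$, and because $\iota$ preserves $\mathfrak{g}_{0,0}^{\mathrm{red}}$ this yields $\iota(\Phi(1,1))=-\Phi(1,1)$. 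To exploit this I would record the companion functional $\Psi^{-}$ defined like $\Psi$ in \eqref{psi map} but through the action of $\mathscr{A}_0$ on $X_{-}$; because an element of $\mathscr{A}_0$ sits in $\mathfrak{g}_{0,0}^{\mathrm{red}}$ as a block matrix whose adjoint action on the $A$-block and on the $\overline{A}$-block differ by a sign, one has $\Psi^{-}=-\Psi$. Applying $\iota$ to the defining relations (i)--(ii) of \eqref{system} shows $\Psi^{-}(\iota(\alpha))=\overline{\Psi(\alpha)}$, hence $\Psi(\iota(\alpha))=-\overline{\Psi(\alpha)}$ for all $\alpha\in\mathscr{A}_0$. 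Taking $\alpha=\Phi(1,1)$ and using $\iota(\Phi(1,1))=-\Phi(1,1)$ gives $\Psi\circ\Phi(1,1)=\overline{\Psi\circ\Phi(1,1)}$, so $\Psi\circ\Phi(1,1)\in\mathbb{R}$.

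Reality and the scaling behavior immediately give existence: if $\Psi\circ\Phi(1,1)=0$ it already lies in $\{-1,0,1\}$, and otherwise $|c|^{2}=1/|\Psi\circ\Phi(1,1)|$ normalizes it to $\pm1$. A transparent invariant lurking behind this is $\mathrm{tr}\,\Phi(1,1)$: at $\mu_{1,1}^1=0$ the commutator term in \eqref{phi map} is traceless, so $\mathrm{tr}\,\Phi(1,1)=\mathrm{tr}(A\overline{A})$, a quantity that is real by cyclicity, invariant under $\iota$-compatible changes of basis (under which $A\overline{A}$ is conjugated), and multiplied by $|c|^{2}$ under rescaling. When $A$ is nondegenerate, right-multiplying relation (i) of \eqref{system} by $(AH_\ell^{-1})^{-1}$ and tracing gives $\Psi\circ\Phi(1,1)=\tfrac{2}{n-1}\mathrm{tr}(A\overline{A})$, so the normalized value is exactly $\mathrm{sign}\,\mathrm{tr}(A\overline{A})$.

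For uniqueness it remains to check invariance of the normalized value under the residual freedom, which consists of $\iota$-compatible base changes (Definition \ref{equivalence of ARMS}) together with shifts $\Omega\mapsto\Omega+\alpha'$, $\alpha'\in\mathscr{A}_0$, that reselect the non-canonical splitting \eqref{modified symbol decomposition a}; the shifts preserving $\mu_{1,1}^1=0$ are precisely those with $\alpha'\in\ker\Psi$. Base changes preserve $\Psi\circ\Phi(1,1)$ because it is assembled from the Lie bracket and the scale-free functional $\eta_{\cdot,1}^1$ alone. The main obstacle is the shift-invariance: one must show that $\Psi$ annihilates the $\mathfrak{g}_{0,0}^{\mathrm{red}}$-valued correction produced in $[X_{-},X_{+}]$ by such a shift, which reduces to the trace-type behavior of $\Psi$ on $\mathscr{A}_0$ and, for nondegenerate $A$, is immediate from the identity of the previous paragraph together with $\mathrm{tr}[\cdot,\cdot]=0$; the degenerate case is handled by applying the same reality computation directly to the shifted representation. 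Granting these verifications, $\Psi\circ\Phi(1,1)$ is determined up to a positive real factor, so its sign in $\{-1,0,1\}$ depends only on $\mathfrak{g}^{0,\mathrm{red}}$, completing the proof.
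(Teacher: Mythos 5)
Your scaling analysis is correct and coincides with the paper's: the paper proves the last claim by parametrizing \emph{all} representations with $\mu_{1,1}^1=0$ as in \eqref{new ARMS 2} and computing that the residual freedom multiplies $\Psi\circ\Phi(1,1)$ by $|b|^2$, exactly your $|c|^2$-computation. But your reality argument, which is the heart of your proposal, contains a genuine gap: the claim $\Psi^{-}=-\Psi$ is false in general. Applying $\iota$ to the defining relations gives only $\Psi^{-}(\alpha)=\overline{\Psi(\iota(\alpha))}$, where $\iota$ acts on upper-left blocks by $\alpha\mapsto-\overline{H_\ell}^{-1}\alpha^{*}\overline{H_\ell}$; the two adjoint actions are related by this \emph{antilinear conjugation}, not by a sign. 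A counterexample sits inside the paper's own classification: for the type \Rmnum{7} data $H_\ell=\left(\begin{smallmatrix}0&1\\1&0\end{smallmatrix}\right)$, $A=\left(\begin{smallmatrix}0&1\\0&0\end{smallmatrix}\right)$, take $\alpha=\left(\begin{smallmatrix}1&0\\0&0\end{smallmatrix}\right)\in\mathscr{A}_0$; then $\alpha(AH_\ell^{-1})+(AH_\ell^{-1})\alpha^{T}=2\,AH_\ell^{-1}$ gives $\Psi(\alpha)=2$, while $\alpha^{T}(H_\ell\overline{A})+(H_\ell\overline{A})\alpha=0$ gives $\Psi^{-}(\alpha)=0\neq-2$. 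Moreover, combined with your (correct) relation $\Psi^{-}(\iota(\alpha))=\overline{\Psi(\alpha)}$, the assertion $\Psi^{-}=-\Psi$ is \emph{equivalent} to the identity $\Psi(\iota(\alpha))=-\overline{\Psi(\alpha)}$ you are trying to derive, so the argument is circular, with the gap hidden in the unproved ``differ by a sign'' remark. What you actually need is this identity at the single element $\alpha=\Phi(1,1)$, and nothing in the proposal supplies it precisely where it matters: when $A$ is degenerate, your trace shortcut is unavailable and indeed fails quantitatively --- for type \Rmnum{3} one has $A=\left(\begin{smallmatrix}1&0\\0&0\end{smallmatrix}\right)$, $\Phi(1,1)=\tfrac{1}{4}\left(\begin{smallmatrix}1&0\\0&3\end{smallmatrix}\right)$ and $\Psi\circ\Phi(1,1)=\tfrac{1}{2}$, whereas $\tfrac{2}{n-1}\mathrm{tr}(A\overline{A})=1$. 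This is the only $7$-dimensional case with $\mathscr{A}_0\neq0$ and nonzero normalization data, so the degenerate case cannot be waved off.

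Two further problems compound this. First, your characterization of the admissible re-splittings is wrong for the same reason: the shift $\Omega\mapsto\Omega+\alpha'$ changes $\mu_{1,1}^1$ by the coefficient $\nu'$ of \eqref{secondalgebra}, and $\nu'(\alpha')=-\overline{\Psi(\iota(\alpha'))}$, so the shifts preserving $\mu_{1,1}^1=0$ are those with $\alpha'\in\iota(\ker\Psi)$, not $\alpha'\in\ker\Psi$. Second, your fallback for shift-invariance in the degenerate case --- ``applying the same reality computation directly to the shifted representation'' --- at best shows the shifted value is real (and even that rests on the flawed step); it does not show it equals the unshifted value, which is the point of well-definedness. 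Finally, you never prove the lemma's first two claims: the paper does so by replacing $(A,\Omega)$ with $(\mu_{1,1}^{-1}A,\mu_{1,1}^{-1}\Omega)$ to get $\mu_{1,1}^1=1$, and, when some $\eta_{\alpha,1}^1\neq0$, by the explicit shift $\Omega\mapsto\Omega-\mu_{1,1}^1\bigl(\overline{\eta_{\alpha,1}^1}\bigr)^{-1}\overline{H_\ell}\alpha^{*}\overline{H_\ell}^{-1}$ that reselects the non-canonical splitting \eqref{modified symbol decomposition a}. To repair your proof you would need either an honest derivation of reality at $\alpha=\Phi(1,1)$ valid for degenerate $A$, or to follow the paper in treating the whole family \eqref{new ARMS 2} at once.
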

\begin{proof}
Suppose $\mu_{1,1}^1\neq0$. By replacing $\{H_\ell, A,\Omega,\mathscr{A}_0\}$ with $\left\{H_\ell, \tfrac{1}{\mu_{1,1}^1}A,\tfrac{1}{\mu_{1,1}^1}\Omega,\mathscr{A}_0\right\}$, item (iii) in \eqref{system} implies that the new $\mu_{1,1}^1$ coefficient for this new matrix representation of $\mathfrak{g}^{0,\mathrm{red}}$ is equal to $1$.

Suppose instead that  there exists $\alpha\in\mathscr{A}_0 $ for which the $\eta_{\alpha,1}^1$ coefficient in \eqref{system} is nonzero.  Recall that the decomposition in \eqref{modified symbol decomposition a} is not unique. Rather, choosing $\Omega$ and $A$ determines this splitting via \eqref{g0plus}. The different splittings are in $1$-to-$1$ correspondence representations of $\mathfrak{g}^{0,\mathrm{red}}$ of the form $\{H_\ell, A,\Omega+\alpha,\mathscr{A}_0\}$  for $\alpha\in \mathscr{A}_0$. In particular, if $\eta_{\alpha,1}^1\neq0$ then 
\begin{align}\label{new ARMS}
\left\{H_\ell, A,\Omega-\mu_{1,1}^1\left(\overline{\eta_{\alpha,1}^1}\right)^{-1}\overline{H_\ell}\alpha^*\overline{H_\ell}^{-1},\mathscr{A}_0\right\}
\end{align}
represents the same ARMS as $\{H_\ell, A,\Omega,\mathscr{A}_0\}$ because $\overline{H_\ell}\alpha^*\overline{H_\ell}^{-1}$ also belongs $\mathscr{A}_0$. After replacing $\{H_\ell, A,\Omega,\mathscr{A}_0\}$ by \eqref{new ARMS}, the new $\mu_{1,1}^1$ coefficient appearing in item (iii) of \eqref{system} is zero.

Lastly, suppose now that $\{H_\ell, A,\Omega,\mathscr{A}_0\}$ is chosen such that $\mu_{1,1}^1=0$. The map $\Psi\circ\Phi$ depends only on the structure coefficients of the  Lie algebra $\mathfrak{g}^{0,\mathrm{red}}$ and the splitting in \eqref{modified symbol decomposition a}, but not the basis of $\mathfrak{g}_{-}$ with respect to which we obtain its matrix representations. So we need to consider all possible representations $\{\widetilde{H_\ell}, \widetilde{A},\widetilde{\Omega},\widetilde{\mathscr{A}_0}\}$ of $\mathfrak{g}^{0,\mathrm{red}}$ as in Lemma \ref{matrix representation lemma} for which $\mu_{1,1}^1=0$ given with respect to any fixed basis of the form in \eqref{Heisenberg basis}. In general, these representations have the form
\begin{align}\label{new ARMS 2}
\widetilde{H_\ell}=a H_\ell,
\quad
\widetilde{A}=b A,
\quad
\widetilde{\Omega}=b \Omega+\alpha,
\quad
\widetilde{\mathscr{A}_0}=\mathscr{A}_0,
\end{align}
where $a\in \mathbb{R}$, $b\in \mathbb{C}$, and $\alpha\in {\mathscr{A}_0}$ has the coefficient $\eta_{\alpha,1}^1=0$ in the system \eqref{system} calculated with respect to $\{H_\ell, A,\Omega,\mathscr{A}_0\}$; indeed $\alpha$ must satisfy $\eta_{\alpha,1}^1=0$ in \eqref{new ARMS 2} because otherwise the coefficient $\mu_{1,1}^1$ in the system \eqref{system} calculated with respect to $\{\widetilde{H_\ell}, \widetilde{A},\widetilde{\Omega},\widetilde{\mathscr{A}_0}\}$ would be nonzero. 

Letting $\widetilde{\Psi}\circ\widetilde{\Phi}$ be the maps defined by \eqref{psi map} and \eqref{phi map} with respect to $\{\widetilde{H_\ell}, \widetilde{A},\widetilde{\Omega},\widetilde{\mathscr{A}_0}\}$ as in \eqref{new ARMS 2}, we have $\widetilde{\Psi}\circ\widetilde{\Phi}(1,1)=\Psi\circ\Phi(b,b)=|b|^2\Psi\circ\Phi(1,1)$. Therefore, by changing the matrix representation of $\mathfrak{g}^{0,\mathrm{red}}$ while keeping $\mu_{1,1}^1=0$ we can only change $\Psi\circ\Phi(1,1)$ by a positive coefficient, but any positive coefficient can be achieved. Since $\Psi\circ\Phi$ is sesquilinear,  $\Psi\circ\Phi(1,1)\in \mathbb{R}$, and hence the possible transformations of the matrix representations in \eqref{new ARMS 2} allow us to normalize $ \Psi\circ\Phi(1,1)$ to equal one of the values in $\{-1,0, 1\}$ uniquely determined by the original sign of $\Psi\circ\Phi(1,1)$. 
\end{proof}

Noting Lemma \ref{ARMS normalization}, we adopt the following normalization convention.
\begin{definition}[normalization condition]
A tuple $\{H_\ell, A_1,\Omega_1,\mathscr{A}_0\}$ representing an ARMS $\mathfrak{g}^{0,\mathrm{red}}$ satisfying the subalgebra property (Definition \ref{subalgebra property})  with Levi kernel dimension $1$ is \emph{normalized} if either 
\begin{enumerate}[label=(case \arabic*),leftmargin=2cm]
\item $\quad\quad\mu_{1,1}^1=0$ and  $\Psi\circ\Phi(1,1)\in\{-1,0,1\}$, or
\item  $\quad\quad\mu_{1,1}^1=1$,
\end{enumerate}
where in both cases $\mu_{1,1}^1$ is the coefficient appearing in \eqref{system}.
\end{definition} 
By Lemma \ref{ARMS normalization}, the value of $\mu_{1,1}^1$ in this normalization is uniquely determined by $\mathfrak{g}^{0,\mathrm{red}}$, and every ARMS $\mathfrak{g}^{0,\mathrm{red}}$ indeed has a normalized representation $\{H_\ell, A_1,\Omega_1,\mathscr{A}_0\}$. Using the normalization convention, we characterize compatibility of ARMS. 
\begin{definition}[compatibility criteria]\label{compatibility criteria definition}
Two ARMS satisfying the subalgebra property (Definition \ref{subalgebra property}) with Levi kernel dimension $1$ are \emph{compatible} if they have  normalized representations such that 
\begin{itemize}
\item the normalized representations have the same respective values of $\mu_{1,1}^1$ and  $\Psi\circ\Phi(1,1)$, and
\item the image of the linear map $\Psi:\mathscr{A}_0\to \mathbb{C}$ defined in \eqref{psi map} with respect to each of the normalized representations is the same.
\end{itemize}
\end{definition}
We can now define the linking of compatible ARMS with Levi kernel dimension $1$. 

\begin{definition}\label{linking of ARMS}
Let $\mathfrak{g}^{0,\mathrm{red}}$ and $\hat{\mathfrak{g}}^{0,\mathrm{red}}$ be compatible ARMS satisfying the subalgebra property (Definition \ref{subalgebra property}) with Levi kernel dimension $1$ and with respective normalized representations $\{ {H_\ell},  {A}, {\Omega}, {\mathscr{A}_0}\}$ and $\{\hat{H_\ell}, \hat{A},\hat{\Omega},\hat{\mathscr{A}_0}\}$. A \emph{link of $\mathfrak{g}^{0,\mathrm{red}}$ and $\hat{\mathfrak{g}}^{0,\mathrm{red}}$} is an ARMS $\widetilde{\mathfrak{g}}^{0,\mathrm{red}}$ of the form $\{\widetilde{H_\ell}, \widetilde{A},\widetilde{\Omega},\widetilde{\mathscr{A}_0}\}$ with
\begin{align}\label{ARMS links}
\widetilde{H_\ell}=
\left(
\begin{array}{c;{2pt/2pt}c}
H_\ell & 0 \\\hdashline[2pt/2pt]
0& \epsilon \hat{H_\ell}
\end{array}
\right),
\quad
\widetilde{A}=
\left(
\begin{array}{c;{2pt/2pt}c}
A & 0 \\\hdashline[2pt/2pt]
0& \hat{A}
\end{array}
\right),
\quad
\widetilde{\Omega}=
\left(
\begin{array}{c;{2pt/2pt}c}
\Omega & 0 \\\hdashline[2pt/2pt]
0& \hat{\Omega}
\end{array}
\right)
\end{align}
and
\[
\mathscr{A}_0:=
\left\{\left.
\left(
\begin{array}{c;{2pt/2pt}c}
\alpha& 0 \\\hdashline[2pt/2pt]
0& \hat\alpha 
\end{array}
\right)
\right|
\alpha\in\mathscr{A}_0,\,\hat{\alpha}\in\hat{\mathscr{A}_0 },\mbox{ and }\eta_{\alpha,1}^1=\eta_{\hat{\alpha},1}^1
\right\},
\]
where $\epsilon=\pm1$, and $\eta_{\alpha,1}^1$ and $\eta_{\hat{\alpha},1}^1$ are the coefficients of item (i) in \eqref{system} calculated with respect to $\{ {H_\ell},  {A}, {\Omega}, {\mathscr{A}_0}\}$ and $\{\hat{H_\ell}, \hat{A},\hat{\Omega},\hat{\mathscr{A}_0}\}$ respectively.
\end{definition}

By construction, the linked ARMS of Definition \eqref{linking of ARMS} satisfy the subalgebra property, and therefore generate homogeneous $2$-nondegenerate hypersurface-type CR manifolds. As with the extension construction of Definition \eqref{ARMS extension}, linking a pair of compatible ARMS can yield two  non-equivalent links, which correspond to taking $\epsilon=1$ or $\epsilon=-1$ in Definition \ref{linking of ARMS}, but it can also happen that either choice of $\epsilon$ yields the same link.

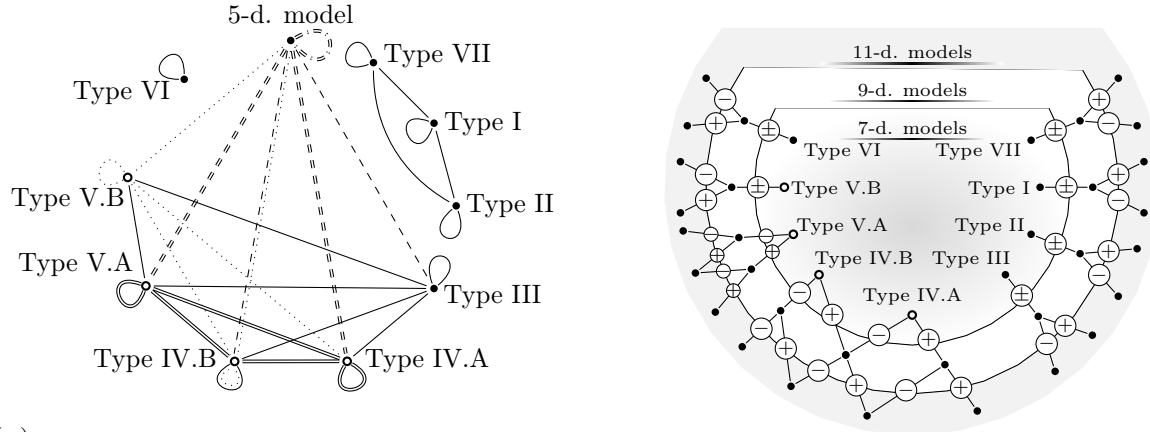
\begin{figure}
\centering
\hfill

\begin{subfigure}[b]{0.46\textwidth}

\scalebox{1}{
\begin{tikzpicture}[
	]
\coordinate (N7) at (11*180/6:2.2 cm);

\coordinate (N8) at (0*180/6:2.2 cm);

\coordinate (N9) at (1*180/6:2.2 cm); 

\coordinate (N1) at (2*180/6:2.2 cm);

\coordinate(N0) at (3*180/6:2.2 cm);

\coordinate (N2) at (4.335*180/6:2.2 cm);

\coordinate (N3) at (5.67*180/6:2.2 cm);

\coordinate (N4) at (6.965*180/6:2.2 cm);

\coordinate (N5) at (8.34*180/6:2.2 cm);

\coordinate (N6) at (9.665*180/6:2.2 cm);

\draw[dash dot,double, shorten >=2pt, shorten <=2pt] (N0) to [out=30,in=-50,looseness=8,min distance=9mm] (N0);

\draw[shorten >=2pt, shorten <=2pt] (N1) to [out=200,in=110,looseness=8,min distance=7mm] (N1);

\draw[shorten >=2pt, shorten <=2pt] (N9) to [out=250,in=160,looseness=8,min distance=7mm] (N9);

\draw[shorten >=2pt, shorten <=2pt] (N8) to [out=220,in=290,looseness=8,min distance=7mm] (N8);

\draw[shorten >=2pt, shorten <=2pt] (N7) to [out=40,in=110,looseness=8,min distance=7mm] (N7);

\draw[double, shorten >=2pt, shorten <=2pt] (N6) to [out=240,in=330,looseness=8,min distance=7mm] (N6);

\draw[shorten >=2pt, shorten <=2pt] (N5) to [out=-60,in=-150,looseness=8,min distance=7mm] (N5);
\draw[dotted, shorten >=2pt, shorten <=2pt] ([shift={(-.015,-.07)}]N5) to [out=-60,in=-150,looseness=8,min distance=4.5mm] ([shift={(-.015,-.07)}]N5);

\draw[double, shorten >=2pt, shorten <=2pt] (N4) to [out=250,in=160,looseness=8,min distance=7mm] (N4);

\draw[dotted, shorten >=2pt, shorten <=2pt] (N3) to [out=210,in=120,looseness=8,min distance=7mm] (N3);

\draw[shorten >=2pt, shorten <=2pt] (N2) to [out=180,in=90,looseness=8,min distance=7mm] (N2);

\draw[thin, double, shorten >=3pt, shorten <=3pt] (N4) -- (N6);


\draw[thin, dotted, shorten >=3pt, shorten <=4pt] (N0) -- (N3);

\draw[thin, double, dashed, shorten >=3pt, shorten <=4pt] (N0) -- (N4);

\draw[thin, dashed, shorten >=3pt, shorten <=4pt] ([shift={(-.02,0)}] N0) -- ([shift={(-.025,0)}]N5);
\draw[thin, dotted, shorten >=3pt, shorten <=4pt] ([shift={(.02,0)}]N0) -- ([shift={(.025,0)}]N5);

\draw[thin, double, dashed, shorten >=3pt, shorten <=4pt] (N0) -- (N6);

\draw[thin, dashed, shorten >=3pt, shorten <=4pt] (N0) -- (N7);


\draw[thin, shorten >=3pt, shorten <=3pt] (N3) -- (N7);

\draw[thin, shorten >=3pt, shorten <=3pt] (N4) -- (N7);

\draw[thin, shorten >=3pt, shorten <=3pt] (N5) -- (N7);

\draw[thin, shorten >=3pt, shorten <=3pt] (N6) -- (N7);



\draw[thin, shorten >=3pt, shorten <=3pt] (N3) -- (N4);

\draw[thin, double, shorten >=3pt, shorten <=3pt] (N4) -- (N5);

\draw[thin, double, shorten >=3pt, shorten <=3pt] (N5) -- (N6);


\draw[thin, dotted, shorten >=3pt, shorten <=3pt] (N3) -- (N5);


\draw[thin, dotted, shorten >=3pt, shorten <=3pt] (N3) -- (N6);

\draw[thin, shorten >=3pt, shorten <=3pt] (N8) -- (N9);

\draw[thin, shorten >=3pt, shorten <=3pt] (N1) -- (N9);

\draw[thin, shorten >=3pt, shorten <=3pt] (N1) to [out=270,in=150] (N8);

\fill[black] (N0) circle (0.05 cm);
	\node[anchor=south] at ([shift={(0,.1)}] N0)  (5d) {\small $5$-d. model};

\fill[black] (N1) circle (0.05 cm);
	\node[anchor=west] at ([shift={(0,.1)}] N1)  (3.7) {\small Type \Rmnum{7}};

\fill[black] (N2) circle (0.05 cm);
	\node[anchor=east] at ([shift={(-.05,-.18)}] N2)  (3.6) {\small Type \Rmnum{6}};
	
\draw[black,thick] (N3) circle (0.05 cm);
	\node[anchor=east] at ([shift={(0.1,-.3)}] N3)  (3.5.b) {\small Type \Rmnum{5}.B};

\draw[black,thick] (N4) circle (0.05 cm);
	\node[anchor=east] at ([shift={(-0.03,.28)}] N4)  (3.5.a) {\small Type \Rmnum{5}.A};

\draw[black,thick] (N5) circle (0.05 cm);
	\node[anchor=north east] at ([shift={(-0.1,.3)}] N5)  (3.4.b) {\small Type \Rmnum{4}.B};

\draw[black,thick] (N6) circle (0.05 cm);
	\node[anchor=north west] at ([shift={(0.1,.3)}] N6)  (3.4.a) {\small Type \Rmnum{4}.A};

\fill[black] (N7) circle (0.05 cm);
	\node[anchor=west] at ([shift={(-0.01,-.1)}] N7)  (3.3) {\small Type \Rmnum{3}};

\fill[black] (N8) circle (0.05 cm) node[anchor=west]{\small Type \Rmnum{2}};

\fill[black] (N9) circle (0.05 cm) node[anchor=west]{\small Type \Rmnum{1}};

\end{tikzpicture}
}

\caption{\scriptsize Vertices in this graph correspond to ARMS generating the flat structures enumerated in Table \ref{main theorem table} as well as the unique ARMS that generates a 5-d. $2$-nondegenerate structure. Vertices are connected by edges if their ARMS are compatible. Each edge corresponds to a linking of ARMS. Vertex pairs linked by two edges represent ARMS with two non-equivalent links. Solid edges denote links that generate $11$-d. flat structures. The two dashed-and-dotted edges linking the 5d. model to itself correspond to the  $7$-d. flat structures of types \Rmnum{5}.A and \Rmnum{5}.B. Purely dashed edges  represent $9$-d. structures. Purely dotted edges represent flat structures that are equivalent to structures obtained from other links in the diagram.}
\label{linking ARMS graph}

\end{subfigure}
\hspace{.5cm}
\begin{subfigure}[b]{0.46\textwidth}

\hspace{.3cm}\scalebox{1}{
\begin{tikzpicture}[
	]

\coordinate (M7) at (8.8*360/10:1.7 cm); 

\coordinate (M8) at (9.4*360/10:1.7 cm); 

\coordinate (M9) at (0*360/10:1.7 cm); 

\coordinate (M1) at (0.6*360/10:1.7 cm); 

\coordinate (M2) at (4.4*360/10:1.7 cm); 

\coordinate (M3) at (5*360/10:1.7 cm); 

\coordinate (M4) at (5.6*360/10:1.7 cm); 

\coordinate (M5) at (6.2*360/10:1.7 cm); 

\coordinate (M6) at (7.5*360/10:1.7 cm); 

%
%
\fill [gray!10,domain=140:400] plot ({8+3.3*cos(\x)}, {0.4+3.3*sin(\x)});
\filldraw [black, inner color=white,domain=144.5:394.5] plot ({8+2.75*cos(\x)}, {0.4+2.75*sin(\x)});
\filldraw [black, inner color=gray!30,domain=150:390] plot ({8+2.1*cos(\x)}, {0.4+2.1*sin(\x)});


\fill[black] ([shift={(8,0.4)}] M1) circle (0.05 cm);
	\node[anchor=east] at ([shift={(8,0.25)}] M1)  (3.7) {\tiny Type \Rmnum{7}};

\fill[black] ([shift={(8,0.4)}] M2) circle (0.05 cm);
	\node[anchor=west] at ([shift={(8,0.25)}] M2)  (3.6) {\tiny Type \Rmnum{6}};

\draw[black,thick] ([shift={(8,0.4)}] M3) circle (0.05 cm);
	\node[shift={(-0.05,0.05)},,anchor=west] at ([shift={(8,0.32)}] M3)  (3.5b) {\tiny Type \Rmnum{5}.B};

\draw[black,thick] ([shift={(8,0.4)}] M4) circle (0.05 cm) node[shift={(-.1,0.1)},anchor=west]{\tiny Type \Rmnum{5}.A};

\draw[black,thick] ([shift={(8,0.4)}] M5) circle (0.05 cm) node[shift={(-.2,0.2)},anchor=west]{\tiny Type \Rmnum{4}.B};

\draw[black,thick] ([shift={(8,0.4)}] M6) circle (0.05 cm) node[anchor=south]{\tiny  Type \Rmnum{4}.A};

\fill[black] ([shift={(8,0.4)}] M7) circle (0.05 cm) node[shift={(.2,0.2)},anchor=east]{\tiny Type \Rmnum{3}};

\fill[black] ([shift={(8,0.4)}] M8) circle (0.05 cm) node[shift={(.05,0.1)},anchor=east]{\tiny Type \Rmnum{2}};

\fill[black] ([shift={(8,0.4)}] M9) circle (0.05 cm);
\node[shift={(0,0.05)},anchor=east] at ([shift={(8,0.32)}] M9)  (3.1) {\tiny Type \Rmnum{1}};

\node[inner sep=2pt] at (8,1.2)  (7d) {\tiny $7$-d. models};
	\path[left color=black,right color=white](7d.south)+(0,0pt) rectangle +(1cm,.5pt); 
	\path[left color=white,right color=black]([shift={(-1,0)}] 7d.south)+(0,0pt) rectangle +(1cm,.5pt); 
\node[inner sep=2pt] at (8,1.7)  (9d) {\tiny $\,9$-d. models$\,$};
	\path[left color=black,right color=white](9d.south)+(0,0pt) rectangle +(1.1cm,.5pt); 
	\path[left color=white,right color=black]([shift={(-1.1,0)}] 9d.south)+(0,0pt) rectangle +(1.1cm,.5pt); 
\node[inner sep=2pt] at (8,2.2)  (11d) {\tiny $\,11$-d. models$\,$};
	\path[left color=black,right color=white](11d.south)+(0,0pt) rectangle +(1.2cm,.5pt); 
	\path[left color=white,right color=black]([shift={(-1.2,0)}] 11d.south)+(0,0pt) rectangle +(1.2cm,.5pt); 

\coordinate (MP1) at (0.6*360/10: 2.4 cm); 
\fill[black] ([shift={(8,0.4)}] MP1) circle (0.05 cm) node[anchor=east]{};
\draw[thin, shorten >=2pt, shorten <=2pt] ([shift={(8,0.4)}] M1) -- ([shift={(8,0.4)}] MP1) node [circle,draw,inner sep=0pt,midway,fill=white] {\tiny $\pm$};
\coordinate (MPP1) at (0.775*360/10: 3.1 cm); 
\fill[black] ([shift={(8,0.4)}] MPP1) circle (0.05 cm) node[anchor=east]{};
\draw[thin, shorten >=2pt, shorten <=2pt] ([shift={(8,0.4)}] MP1) -- ([shift={(8,0.4)}] MPP1) node [circle,draw,inner sep=0pt,midway,fill=white] {\tiny $+$};
\coordinate (MPM1) at (0.425*360/10: 3.1 cm); 
\fill[black] ([shift={(8,0.4)}] MPM1) circle (0.05 cm) node[anchor=east]{};
\draw[thin, shorten >=2pt, shorten <=2pt] ([shift={(8,0.4)}] MP1) -- ([shift={(8,0.4)}] MPM1) node [circle,draw,inner sep=0pt,midway,fill=white] {\tiny $-$};

\coordinate (MP7) at (8.73*360/10: 2.4 cm); 
\fill[black] ([shift={(8,0.4)}] MP7) circle (0.05 cm) node[anchor=east]{};
\draw[thin, shorten >=2pt, shorten <=2pt] ([shift={(8,0.4)}] M7) -- ([shift={(8,0.4)}] MP7) node [circle,draw,inner sep=0pt,midway,fill=white] {\tiny $\pm$};
\coordinate (MPP7) at (8.905*360/10: 3.1 cm); 
\fill[black] ([shift={(8,0.4)}] MPP7) circle (0.05 cm) node[anchor=east]{};
\draw[thin, shorten >=2pt, shorten <=2pt] ([shift={(8,0.4)}] MP7) -- ([shift={(8,0.4)}] MPP7) node [circle,draw,inner sep=0pt,midway,fill=white] {\tiny $+$};
\coordinate (MPM7) at (8.55*360/10: 3.1 cm); 
\fill[black] ([shift={(8,0.4)}] MPM7) circle (0.05 cm) node[anchor=east]{};
\draw[thin, shorten >=2pt, shorten <=2pt] ([shift={(8,0.4)}] MP7) -- ([shift={(8,0.4)}] MPM7) node [circle,draw,inner sep=0pt,midway,fill=white] {\tiny $-$};

\coordinate (MP8) at (9.4*360/10:2.4 cm); 
\fill[black] ([shift={(8,0.4)}] MP8) circle (0.05 cm) node[anchor=east]{};
\draw[thin, shorten >=2pt, shorten <=2pt] ([shift={(8,0.4)}] M8) -- ([shift={(8,0.4)}] MP8) node [circle,draw,inner sep=0pt,midway,fill=white] {\tiny $\pm$};
\coordinate (MPP8) at (9.575*360/10: 3.1 cm); 
\fill[black] ([shift={(8,0.4)}] MPP8) circle (0.05 cm) node[anchor=east]{};
\draw[thin, shorten >=2pt, shorten <=2pt] ([shift={(8,0.4)}] MP8) -- ([shift={(8,0.4)}] MPP8) node [circle,draw,inner sep=0pt,midway,fill=white] {\tiny $+$};
\coordinate (MPM8) at (9.225*360/10: 3.1 cm); 
\fill[black] ([shift={(8,0.4)}] MPM8) circle (0.05 cm) node[anchor=east]{};
\draw[thin, shorten >=2pt, shorten <=2pt] ([shift={(8,0.4)}] MP8) -- ([shift={(8,0.4)}] MPM8) node [circle,draw,inner sep=0pt,midway,fill=white] {\tiny $-$};

\coordinate (MP9) at (0*360/10:2.4 cm); 
\fill[black] ([shift={(8,0.4)}] MP9) circle (0.05 cm) node[anchor=east]{};
\draw[thin, shorten >=2pt, shorten <=2pt] ([shift={(8,0.4)}] M9) -- ([shift={(8,0.4)}] MP9) node [circle,draw,inner sep=0pt,midway,fill=white] {\tiny $\pm$};
\coordinate (MPP9) at (0.175*360/10: 3.1 cm); 
\fill[black] ([shift={(8,0.4)}] MPP9) circle (0.05 cm) node[anchor=east]{};
\draw[thin, shorten >=2pt, shorten <=2pt] ([shift={(8,0.4)}] MP9) -- ([shift={(8,0.4)}] MPP9) node [circle,draw,inner sep=0pt,midway,fill=white] {\tiny $+$};
\coordinate (MPM9) at (-0.175*360/10: 3.1 cm); 
\fill[black] ([shift={(8,0.4)}] MPM9) circle (0.05 cm) node[anchor=east]{};
\draw[thin, shorten >=2pt, shorten <=2pt] ([shift={(8,0.4)}] MP9) -- ([shift={(8,0.4)}] MPM9) node [circle,draw,inner sep=0pt,midway,fill=white] {\tiny $-$};

\coordinate (MP2) at (4.4*360/10:2.4 cm); 
\fill[black] ([shift={(8,0.4)}] MP2) circle (0.05 cm) node[anchor=east]{};
\draw[thin, shorten >=2pt, shorten <=2pt] ([shift={(8,0.4)}] M2) -- ([shift={(8,0.4)}] MP2) node [circle,draw,inner sep=0pt,midway,fill=white] {\tiny $\pm$};
\coordinate (MPM2) at (4.225*360/10: 3.1 cm); 
\fill[black] ([shift={(8,0.4)}] MPM2) circle (0.05 cm) node[anchor=east]{};
\draw[thin, shorten >=2pt, shorten <=2pt] ([shift={(8,0.4)}] MP2) -- ([shift={(8,0.4)}] MPM2) node [circle,draw,inner sep=0pt,midway,fill=white] {\tiny $-$};
\coordinate (MPP2) at (4.575*360/10: 3.1 cm); 
\fill[black] ([shift={(8,0.4)}] MPP2) circle (0.05 cm) node[anchor=east]{};
\draw[thin, shorten >=2pt, shorten <=2pt] ([shift={(8,0.4)}] MP2) -- ([shift={(8,0.4)}] MPP2) node [circle,draw,inner sep=0pt,midway,fill=white] {\tiny $+$};

\coordinate (MP3) at (5*360/10:2.4 cm); 
\fill[black] ([shift={(8,0.4)}] MP3) circle (0.05 cm) node[anchor=east]{};
\draw[thin, shorten >=2pt, shorten <=2pt] ([shift={(8,0.4)}] M3) -- ([shift={(8,0.4)}] MP3) node [circle,draw,inner sep=0pt,midway,fill=white] {\tiny $\pm$};
\coordinate (MPM3) at (4.825*360/10: 3.1 cm); 
\fill[black] ([shift={(8,0.4)}] MPM3) circle (0.05 cm) node[anchor=east]{};
\draw[thin, shorten >=2pt, shorten <=2pt] ([shift={(8,0.4)}] MP3) -- ([shift={(8,0.4)}] MPM3) node [circle,draw,inner sep=0pt,midway,fill=white] {\tiny $-$};
\coordinate (MPP3) at (5.175*360/10: 3.1 cm); 
\fill[black] ([shift={(8,0.4)}] MPP3) circle (0.05 cm) node[anchor=east]{};
\draw[thin, shorten >=2pt, shorten <=2pt] ([shift={(8,0.4)}] MP3) -- ([shift={(8,0.4)}] MPP3) node [circle,draw,inner sep=0pt,midway,fill=white] {\tiny $+$};

\coordinate (MM4) at (5.45*360/10:2.4 cm); 
\fill[black] ([shift={(8,0.4)}] MM4) circle (0.05 cm) node[anchor=east]{};
\draw[thin, shorten >=2pt, shorten <=2pt] ([shift={(8,0.4)}] M4) -- ([shift={(8,0.4)}] MM4) node [circle,draw,inner sep=-1pt,midway,fill=white] {\tiny $-$};
\coordinate (MP4) at (5.75*360/10: 2.4 cm); 
\fill[black] ([shift={(8,0.4)}] MP4) circle (0.05 cm) node[anchor=east]{};
\draw[thin, shorten >=2pt, shorten <=2pt] ([shift={(8,0.4)}] M4) -- ([shift={(8,0.4)}] MP4) node [circle,draw,inner sep=-1pt,midway,fill=white] {\tiny $+$};
\coordinate (MMM4) at (5.3*360/10: 3.1 cm); 
\fill[black] ([shift={(8,0.4)}] MMM4) circle (0.05 cm) node[anchor=east]{};
\draw[thin, shorten >=2pt, shorten <=2pt] ([shift={(8,0.4)}] MM4) -- ([shift={(8,0.4)}] MMM4) node [circle,draw,inner sep=-1pt,midway,fill=white] {\tiny $-$};
\coordinate (MPP4) at (5.9*360/10: 3.1 cm); 
\fill[black] ([shift={(8,0.4)}] MPP4) circle (0.05 cm) node[anchor=east]{};
\draw[thin, shorten >=2pt, shorten <=2pt] ([shift={(8,0.4)}] MP4) -- ([shift={(8,0.4)}] MPP4) node [circle,draw,inner sep=-1pt,midway,fill=white] {\tiny $+$};
\coordinate (MMP4) at (5.6*360/10: 3.1 cm); 
\fill[black] ([shift={(8,0.4)}] MMP4) circle (0.05 cm) node[anchor=east]{};
\draw[thin, shorten >=2pt, shorten <=2pt] ([shift={(8,0.4)}] MM4) -- ([shift={(8,0.4)}] MMP4) node [circle,draw,inner sep=-1pt,midway,fill=white] {\tiny $+$};
\draw[thin, shorten >=2pt, shorten <=2pt] ([shift={(8,0.4)}] MP4) -- ([shift={(8,0.4)}] MMP4) node [circle,draw,inner sep=-1pt,midway,fill=white] {\tiny $-$};

\coordinate (MM5) at (6.2*360/10:2.4 cm); 
\fill[black] ([shift={(8,0.4)}] MM5) circle (0.05 cm) node[anchor=east]{};
\draw[thin, shorten >=2pt, shorten <=2pt] ([shift={(8,0.4)}] M5) -- ([shift={(8,0.4)}] MM5) node [circle,draw,inner sep=0pt,midway,fill=white] {\tiny $-$};
\coordinate (MP5) at (6.9*360/10:2.4 cm); 
\fill[black] ([shift={(8,0.4)}] MP5) circle (0.05 cm) node[anchor=east]{};
\draw[thin, shorten >=2pt, shorten <=2pt] ([shift={(8,0.4)}] M5) -- ([shift={(8,0.4)}] MP5) node [circle,draw,inner sep=0pt,midway,fill=white] {\tiny $+$};
\coordinate (MMM5) at (6.2*360/10:3.1 cm); 
\fill[black] ([shift={(8,0.4)}] MMM5) circle (0.05 cm) node[anchor=east]{};
\draw[thin, shorten >=2pt, shorten <=2pt] ([shift={(8,0.4)}] MM5) -- ([shift={(8,0.4)}] MMM5) node [circle,draw,inner sep=0pt,midway,fill=white] {\tiny $-$};
\coordinate (MPM5) at (6.63*360/10:3.1 cm); 
\fill[black] ([shift={(8,0.4)}] MPM5) circle (0.05 cm) node[anchor=east]{};
\draw[thin, shorten >=2pt, shorten <=2pt] ([shift={(8,0.4)}] MP5) -- ([shift={(8,0.4)}] MPM5) node [circle,draw,inner sep=0pt,midway,fill=white] {\tiny $-$};
\draw[thin, shorten >=2pt, shorten <=2pt] ([shift={(8,0.4)}] MM5) -- ([shift={(8,0.4)}] MPM5) node [circle,draw,inner sep=0pt,midway,fill=white] {\tiny $+$};

\coordinate (MP6) at (7.79*360/10:2.4 cm); 
\fill[black] ([shift={(8,0.4)}] MP6) circle (0.05 cm) node[anchor=east]{};
\draw[thin, shorten >=2pt, shorten <=2pt] ([shift={(8,0.4)}] M6) -- ([shift={(8,0.4)}] MP6) node [circle,draw,inner sep=0pt,midway,fill=white] {\tiny $+$};
\coordinate (MM6) at (6.9*360/10:2.4 cm); 
\fill[black] ([shift={(8,0.4)}] MM6) circle (0.05 cm) node[anchor=east]{};
\draw[thin, shorten >=2pt, shorten <=2pt] ([shift={(8,0.4)}] M6) -- ([shift={(8,0.4)}] MM6) node [circle,draw,inner sep=0pt,midway,fill=white] {\tiny $-$};
\coordinate (MPP6) at (7.95*360/10:3.1 cm); 
\fill[black] ([shift={(8,0.4)}] MPP6) circle (0.05 cm) node[anchor=east]{};
\draw[thin, shorten >=2pt, shorten <=2pt] ([shift={(8,0.4)}] MP6) -- ([shift={(8,0.4)}] MPP6) node [circle,draw,inner sep=0pt,midway,fill=white] {\tiny $+$};
\coordinate (MMP6) at (7.19*360/10:3.1 cm); 
\fill[black] ([shift={(8,0.4)}] MMP6) circle (0.05 cm) node[anchor=east]{};
\draw[thin, shorten >=2pt, shorten <=2pt] ([shift={(8,0.4)}] MM6) -- ([shift={(8,0.4)}] MMP6) node [circle,draw,inner sep=0pt,midway,fill=white] {\tiny $+$};
\draw[thin, shorten >=2pt, shorten <=2pt] ([shift={(8,0.4)}] MP6) -- ([shift={(8,0.4)}] MMP6) node [circle,draw,inner sep=0pt,midway,fill=white] {\tiny $-$};

\end{tikzpicture}
}

\caption{\scriptsize Vertices in this graph correspond to  flat structures generated by ARMS and are arranged in 3 concentric rings. The inner ring has the nine $7$-d. structures enumerated in Table \ref{main theorem table}. Edges connect vertices representing structures generated by ARMS related by a positive or negative $2$-dimensional extension. Vertices in the middle and outer rings correspond to $9$-d. and $11$-d. flat structures respectively. Edges labeled by $+$ (respectively $-$) correspond to structures obtained by applying a positive (resp. negative) extension with respect to the matrix representations of the nine $7$-d. flat structures' ARMS given in Table \ref{main theorem table}. The  $\pm$ edges stem from ARMS having equivalent positive and negative extensions.}
\label{extending ARMS graph}

\end{subfigure}
\hfill
\caption{\small These graphs show extensions and links of ARMS  generating  $9$ and $11$-dimensional flat structures. The $\TikCircle$ vertex label marks the ARMS of $7$-d. structures obtained as links or extensions of the ARMS that generates the unique flat $5$-d. structure.}
\label{linking and extending ARMS graphs}
\end{figure}
By linking pairs of ARMS from among the ARMS of Theorem \ref{ch3 main 7d theorem a} as well as the unique ARMS that generates a $5$-dimensional homogeneous hypersurface, we obtain six $9$-dimensional models, represented by the dashed edges in Figure \ref{linking ARMS graph}, and  $24$ $11$-dimensional models, represented by the solid edges in Figure \ref{linking ARMS graph}.

In total, using the classification of ARMS in Theorem \eqref{ch3 main 7d theorem a} and simple constructions of linking and extending ARMS, we obtain $14$ homogeneous locally  non-equivalent structures in $\mathbb{C}^5$  and $38$ homogeneous locally  non-equivalent structures in $\mathbb{C}^6$. Notice that $9$ and $11$-dimensional structures obtained by linking pairs of ARMS among those of types \Rmnum{4}.A and \Rmnum{4}.B and the $5$-dimensional flat structure's ARMS are equivalent to those obtained from $2$ and $4$-dimensional extensions of the ARMS of types \Rmnum{5}.A and \Rmnum{5}.B, which accounts for three $9$-dimensional and five $11$-dimensional structures appearing in both graphs of Figure \ref{linking and extending ARMS graphs}. All other $9$ and $11$-dimensional structures described by  Figure \ref{linking ARMS graph} do not also appear in the graph of Figure \ref{extending ARMS graph}.

These totals are obtained purely by linking pairs of ARMS and applying extensions without combining these two operations in sequence.  It is natural therefore to check if other structures arise from linking more than two ARMS or by combining the extending and linking constructions, but, in fact, no additional structures arise from such combinations. This last observation readily follows from noticing that all constructions of extending and linking ARMS commute, and the structures of types \Rmnum{4} and \Rmnum{5} are themselves obtained as extensions and linkings of the ARMS generating the $5$-dimensional structure. Specifically, all of these combinations of linking and extending the ARMS of $5$ and $7$-dimensional structures can be reduced to combinations of linking and extending the ARMS in Figure \ref{linking ARMS graph} labeled by $\TikCircleFill$ (i.e., without using the ARMS  labeled by $\TikCircle$).  All possible combinations written in this reduced form are indeed accounted for in the graphs of Figure \ref{linking and extending ARMS graphs}.

\section{Examples in $\mathbb{C}^5$}\label{9-dimensional examples with rank-$1$ Levi kernel}

Having introduced links and extensions of ARMS, the following definition is very natural.
\begin{definition}\label{indecomposable ARMS}
An ARMS is \emph{indecomposable} if it is not equivalent to a combination of links and extensions of other ARMS, and it is  \emph{decomposable} otherwise.
\end{definition}
In addition to the $14$ ARMS found in Section \ref{Extending and linking  ARMS} that generate flat structures in $\mathbb{C}^5$, six other such non-equivalent ARMS with Levi-kernel dimension $1$ are known to the author and they are listed here, given by their matrix representations $\{H_{\ell},A,\Omega,\mathscr{A}_0\}$. All six are indecomposable, as they do not appear in Section \ref{Extending and linking  ARMS}. 

\subsection{Example \ref{9 dimensional non-regular strongly non-nilpotent example}} \label{9 dimensional non-regular strongly non-nilpotent example}
Let  
\begin{align}\label{9 dimensional non-regular strongly non-nilpotent example formula a}
\bgroup
\renewcommand\arraystretch{1}  
A=
 \left(
\begin{array}{ccc}
1 & 1 & 0 \\
0 & 1 & 0 \\
0 & 0 & 1 
\end{array}
\right),
\quad
H_\ell=
\left(
\begin{array}{ccc}
0 & 1 & 0 \\
1 & 0 & 0 \\
0 & 0 & 1
\end{array}
\right),
\egroup
\end{align}
\begin{align}\label{9 dimensional non-regular strongly non-nilpotent example formula b}
\Omega=
\left(
\begin{array}{ccc}
 \frac{1}{2} & 1 & 0 \\
 0 & -\frac{3}{2} & 0 \\
 0 & 0 & -\frac{1}{2}
\end{array}
\right)
\quad\mbox{ and }\quad
\bgroup
\renewcommand\arraystretch{1}  
\mathscr{A}_0=
\left\{
\left.\left(
\begin{array}{ccc}
 0 & 0 & a \\
 0 & 0 & 0 \\
 0 & - a & 0
\end{array}
\right)
\,\right|\,
{a\in\mathbb{C}}
\right\}.
\egroup
\end{align}
By calculating the complete ARMS represented by $(H_\ell,A,\Omega, \mathscr{A}_0)$ and applying \cite[Theorem 6.2]{SYKES2023108850} and \cite[Theorem 3.8]{sykes2021maximal}, we find that this structure's symmetry algebra is $11$-dimensional.

\subsection{Example \ref{9 dimensional other non-regular strongly non-nilpotent example}} \label{9 dimensional other non-regular strongly non-nilpotent example}
Let  
\begin{align}\label{9 dimensional other non-regular strongly non-nilpotent example formula a}
A=
 \left(
\begin{array}{ccc}
0 & -1 & 0 \\
1 & 0 & 0 \\
0 & 0 & \sqrt{3} 
\end{array}
\right),
\quad
\bgroup
\renewcommand\arraystretch{1}  
H_\ell=
\left(
\begin{array}{ccc}
0 & 1 & 0 \\
1 & 0 & 0 \\
0 & 0 & 1
\end{array}
\right)
\egroup
,
\end{align}
\begin{align}\label{9 dimensional other non-regular strongly non-nilpotent example formula b}
\Omega=
\frac{1}{\sqrt{3}}\left(
\begin{array}{ccc}
 \frac{1}{2} & -1 & -\sqrt{3}  \\
 -1 & \frac{1}{2} & \sqrt{3}   \\
1   & 1 & \frac{1}{2}
\end{array}
\right)
\quad\mbox{ and }\quad
\mathscr{A}_0=0.
\end{align}
By calculating the complete ARMS represented by $(H_\ell,A,\Omega, \mathscr{A}_0)$ and applying \cite[Theorem 6.2]{SYKES2023108850} and \cite[Theorem 3.8]{sykes2021maximal}, we find that this structure's symmetry algebra is $10$-dimensional.

\subsection{Example \ref{9 dimensional 3rd non-regular strongly non-nilpotent example}} \label{9 dimensional 3rd non-regular strongly non-nilpotent example}
Let  
\begin{align}\label{9 dimensional 3rd non-regular strongly non-nilpotent example formula a}
\bgroup
\renewcommand\arraystretch{1}  
A=
 \left(
\begin{array}{ccc}
1 & 1 & 0 \\
0 & 1 & 1 \\
0 & 0 & 1 
\end{array}
\right),
\quad
H_\ell=
\left(
\begin{array}{ccc}
0 & 0 & 1 \\
0 & 1 & 0 \\
1 & 0 & 0
\end{array}
\right)
\egroup
,
\quad
\Omega=
\left(
\begin{array}{ccc}
- \frac{3}{2} & -1 & 0 \\
 0 & -\frac{1}{2} & 0 \\
 0 & 0 & \frac{1}{2}
\end{array}
\right)
\end{align}
and 
$
\mathscr{A}_0=0.
$
By calculating the complete ARMS represented by $(H_\ell,A,\Omega, \mathscr{A}_0)$ and applying \cite[Theorem 6.2]{SYKES2023108850} and \cite[Theorem 3.8]{sykes2021maximal}, we find that this structure's symmetry algebra is $10$-dimensional.

\subsection{Example \ref{9 dimensional weakly non-nilpotent example 1}} \label{9 dimensional weakly non-nilpotent example 1}
Let  
\begin{align}\label{9 dimensional 4th non-regular strongly non-nilpotent example formula a}
A=
\bgroup
\renewcommand\arraystretch{1}  
 \left(
\begin{array}{ccc}
0 & 1 & 0 \\
0 & 0 & 0 \\
0 & 0 & 1 
\end{array}
\right),
\quad
H_\ell=
\left(
\begin{array}{ccc}
0 & 1 & 0 \\
1 & 0 & 0 \\
0 & 0 & 1
\end{array}
\right),
\egroup
\end{align}
\begin{align}\label{9 dimensional 4th non-regular strongly non-nilpotent example formula b}
\Omega=
\left(
\begin{array}{ccc}
- \frac{3}{2} & -1 & 0 \\
 0 & -\frac{1}{2} & 0 \\
 0 & 0 & \frac{1}{2}
\end{array}
\right)
\quad\mbox{ and }\quad
\bgroup
\renewcommand\arraystretch{1}  
\mathscr{A}_0=
\left\{
\left.\left(
\begin{array}{ccc}
 0 & a & 0 \\
 0 & 0 & 0 \\
 0 & 0 & 0
\end{array}
\right)
\,\right|\,
a\in\mathbb{C}
\right\}.
\egroup
\end{align}
By calculating the complete ARMS represented by $(H_\ell,A,\Omega, \mathscr{A}_0)$ and applying \cite[Theorem 6.2]{SYKES2023108850} and \cite[Theorem 3.8]{sykes2021maximal}, we find that this structure's symmetry algebra is $11$-dimensional.

\subsection{Example \ref{9 dimensional submaximal nilpotent example 2}.} \label{9 dimensional submaximal nilpotent example 2}
Let  
\[
\bgroup
\renewcommand\arraystretch{1}  
A=
 \left(
\begin{array}{ccc}
0 & 1 & 0 \\
0 & 0 & 1 \\
0 & 0 & 0 
\end{array}
\right)
\quad\mbox{ and }\quad
H_\ell=
\left(
\begin{array}{ccc}
0 & 0 & 1 \\
0 & 1 & 0 \\
1 & 0 & 0
\end{array}
\right).
\egroup
\]
The ARMS represented by this pair $(H_\ell, A)$  and
\begin{align}\label{nilpotent 9d symbol}
\bgroup
\renewcommand\arraystretch{1}  
\Omega=
\left(
\begin{array}{ccc}
 0 & 0 & 0 \\
 0 & 0 & 0 \\
 0 & 0 & 0
\end{array}
\right)
\quad\mbox{ and }\quad
\mathscr{A}_0=
\left\{
\left.\left(
\begin{array}{ccc}
 0 & 0 & a \\
 0 & 0 & 0 \\
 0 & 0 & 0
\end{array}
\right)
\,\right|\,
a\in\mathbb{C}
\right\},
\egroup
\end{align}
generates a structure that was studied in \cite{porter2021absolute}, whereas the ARMS given by  $(H_\ell, A)$  and
\begin{align}\label{nilpotent 9d symbol}
\bgroup
\renewcommand\arraystretch{1}  
\Omega=
\left(
\begin{array}{ccc}
 0 & 1 & 0 \\
 0 & 0 & 0 \\
 0 & 0 & 0
\end{array}
\right)
\quad\mbox{ and }\quad
\mathscr{A}_0=
\left\{
\left.\left(
\begin{array}{ccc}
 a & 0 & b \\
 0 & 0 & 0 \\
 0 & 0 & c
\end{array}
\right)
\,\right|\,
a,b,c\in\mathbb{C}
\right\}.
\egroup
\end{align}
generates a structure that was studied in \cite{SYKES2023108850}. These two structures are compared in \cite[Examples 8.2 and 8.3]{SYKES2023108850}. Their respective symmetry algebras have dimension  $16$ and $14$.

\section{Sequences of flat structures in higher dimensions}\label{Sequences of flat structures in higher dimensions}

In \cite{porter2021absolute}, for every integer $n\geq 3$, several $(2n+1)$-dimensional homogenous  $2$-nondegenerate  hypersurf\-ace\--type CR manifolds are explicitly described as flat structures generated by regular CR symbols. In terms of extensions and linkings of ARMS, for the structures with $n>4$ and Levi kernel rank $1$, all examples in \cite{porter2021absolute} are flat structures generated by combinations of linking and extending the ARMS equal to regular CR symbols whose flat structures are hypersurfaces in $\mathbb{C}^3$, $\mathbb{C}^4$, or $\mathbb{C}^5$. In this section, we introduce two sequences of examples of indecomposable ARMS, showing empirically that indecomposable ARMS generating hypersurfaces in $\mathbb{C}^{n+1}$ exist for all $n\geq 2$. 

Lemmas \ref{nilpotent model sequence}  and  \ref{nilpotent symbols that do not have nilpotent ARMS lemma} each give a sequence of indecomposable ARMS whose CR symbols are the same. The ARMS of \ref{nilpotent model sequence} have nilpotent bases, whereas the ARMS of \ref{nilpotent symbols that do not have nilpotent ARMS lemma} do not have completions with a nilpotent base, which shows that Lemmas \ref{nilpotent model sequence}  and  \ref{nilpotent symbols that do not have nilpotent ARMS lemma} indeed describe non-equivalent structures.

In what follows we let $T_k$ denote the $k\times k$ nilpotent matrix in Jordan normal form whose eigenspace is $1$-dimensional, and let $\mathrm{Diag}(a_1,\ldots, a_k)$ denote the diagonal matrix whose $(j,j)$ entry is $a_k$, i.e.,
\[
\bgroup
\renewcommand\arraystretch{1}  
T_k=
\left(
\begin{array}{ccccc}
0 & 1 & 0 & \cdots & 0 \\
\vdots & \ddots & \ddots &\ddots & \vdots\\
\vdots&&\ddots&\ddots& 0\\
\vdots&&&\ddots&1\\
0&0&\cdots&0&0
\end{array}
\right)
\quad\mbox{ and }\quad
\mathrm{Diag}(a_1,\ldots, a_k)=
\left(
\begin{array}{cccc}
a_1 & 0 & \cdots & 0\\
0 & a_2 & \ddots&\vdots \\
\vdots & \ddots& \ddots &0 \\
0 & \cdots &0 & a_k
\end{array}
\right).
\egroup
\]

The following two lemmas give non-equivalent examples of $(2n+1)$-dimensional flat structures generated by ARMS of the form $(H_\ell, A, \Omega,\mathscr{A}_0)$ with
\begin{align}\label{nilpotent symbol}
(H_\ell)_{i,j}=\delta_{i+j,n}
\quad\mbox{ and }\quad
A=T_{n-1}.
\end{align}

\begin{lemma}\label{nilpotent model sequence}
For each $n\geq 3$, there exists a sequence $(\omega_1,\ldots,\omega_{n-1})$ such that the ARMS encoded by \eqref{nilpotent symbol} and
\begin{align}\label{nilpotent general d symbol}
\Omega=\mathrm{Diag}(\omega_1,\ldots, \omega_{n-1})T_{n-1}
\quad\mbox{ and }\quad
\mathscr{A}_0=0
\end{align}
is a nilpotent subalgebra of $\mathfrak{g}_{-}\rtimes\mathfrak{csp}(\mathfrak{g}_{-1})$, and hence the flat structure generated by this ARMS has the CR symbol encoded by  \eqref{nilpotent symbol}. A defining formula for $(\omega_1,\ldots,\omega_{n-1})$ is given in \eqref{nilpotent examples Sol}.
\end{lemma}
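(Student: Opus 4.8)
The plan is to verify directly that, for a suitable explicit sequence $(\omega_1,\ldots,\omega_{n-1})$, the tuple $(H_\ell,A,\Omega,\mathscr{A}_0)$ of \eqref{nilpotent symbol}--\eqref{nilpotent general d symbol} satisfies the system \eqref{system} of Lemma \ref{system of conditions for homogeneity lemma}, and then to read off nilpotency from the triangular shape of the matrices involved. Since $r=1$ and $\mathscr{A}_0=0$, items (i) and (ii) of \eqref{system} are vacuous (there is no nonzero $\alpha$), so the subalgebra property collapses to the two matrix identities (iii) and (iv). First I would record the auxiliary matrices: writing $J$ for the reversal matrix $J_{i,j}=\delta_{i+j,n}$, one has $H_\ell=J$, $A=T_{n-1}$, $H_\ell\overline{A}=JT_{n-1}$ (symmetric, supported on a single antidiagonal), and $B:=H_\ell^{-1}\Omega^T H_\ell=T_{n-1}\,\mathrm{Diag}(\omega_{n-1},\ldots,\omega_1)$; in particular $A$, $\Omega$, $B$ are all strictly upper triangular with entries only on the first superdiagonal.

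Next I would extract the scalar equations. Because $\Omega$ and $B$ have entries only on the first superdiagonal, item (iii), namely $\Omega^T H_\ell\overline{A}+H_\ell\overline{A}\,\Omega=\mu_{1,1}^1\,H_\ell\overline{A}$, compares a matrix supported on one antidiagonal with $\mu_{1,1}^1 H_\ell\overline{A}$ supported on the neighbouring antidiagonal; matching entries forces $\mu_{1,1}^1=0$ together with the reflection relations $\omega_m+\omega_{n-m}=0$. With $\mu_{1,1}^1=0$ and $\mathscr{A}_0=0$, item (iv) reduces to $[\overline{B},\Omega]=-A\overline{A}=-T_{n-1}^2$, and since $[\overline{B},\Omega]$ and $T_{n-1}^2$ live on the second superdiagonal this becomes the three-term recurrence $|\omega_{i+1}|^2-\omega_i\overline{\omega_{i+2}}=1$ (after substituting the reflection relations). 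I would then introduce the explicit sequence \eqref{nilpotent examples Sol} and check that it solves this recurrence together with the reflection relations for every $n\geq 3$; this verification against the antisymmetric nonlinear recursion is the main obstacle, being an honest (if elementary) computation, and it is exactly what establishes the subalgebra property.

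For nilpotency I would exploit that, for the verified solution, $A$, $\Omega$, $B$ and their conjugates are simultaneously strictly upper triangular. Writing $Z_+$ for the generator of $\mathfrak{g}_{0,+}^{\mathrm{red}}$ in \eqref{g0plus} and $Z_-=\iota(Z_+)$ for that of $\mathfrak{g}_{0,-}^{\mathrm{red}}$ in \eqref{g0minus}, a direct check of the four blocks of $[Z_+,Z_-]$ shows it vanishes: the $(1,1)$-block is $A\overline{A}+[\overline{B},\Omega]=0$ by item (iv), while the off-diagonal blocks vanish entrywise because $\omega_m+\omega_{n-m}=0$ (and the $(2,2)$-block vanishes by applying $\iota$). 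Hence $\mathfrak{g}_0^{\mathrm{red}}=\langle Z_+,Z_-\rangle$ is abelian, both generators act on $\mathfrak{g}_{-1}$ by the strictly triangular (hence nilpotent) matrices of \eqref{g0plus}--\eqref{g0minus}, and they annihilate $\mathfrak{g}_{-2}$ since their conformal weight equals $\mathrm{tr}\,\Omega-\mathrm{tr}\,B=0$. This lets me define a $\mathbb{Z}$-grading of the ARMS in which $e_0\in\mathfrak{g}_{-2}$ has degree $n$, the Jordan-level basis vectors of $\mathfrak{g}_{-1,\pm1}$ have degrees $1,\ldots,n-1$, and $Z_\pm$ have degree $-1$; all structure constants are then homogeneous and the zero-degree component is trivial. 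The associated grading element is therefore a nonsingular derivation, so by Jacobson's theorem the ARMS is nilpotent. (This is precisely where these examples differ from, say, type \Rmnum{3}, whose analogous bracket $[Z_+,Z_-]$ produces an $\mathfrak{sl}_2$-triple.) Finally, the ``hence'' clause is immediate from Remark \ref{symbol_as_tuple}: the CR symbol of the generated flat structure is determined by the pair $(H_\ell,A)$, i.e.\ by \eqref{nilpotent symbol}.
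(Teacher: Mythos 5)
Your reduction of the subalgebra property is correct and is essentially the computation underlying the paper's own proof: with $\mathscr{A}_0=0$ and $r=1$, items (i)--(ii) of \eqref{system} are vacuous; item (iii) forces $\mu_{1,1}^1=0$ together with the reflection relations $\omega_m+\omega_{n-m}=0$ (with the small caveat that these are only forced for $m=2,\ldots,n-2$, since the entry that would impose the case $m=1$ sits at position $(2,n)$, outside the $(n-1)\times(n-1)$ matrix, and $\omega_{n-1}$ never enters $\Omega$ at all); and item (iv) becomes $[\overline{B},\Omega]=-T_{n-1}^2$ for your $B=H_\ell^{-1}\Omega^TH_\ell$, i.e.\ your three-term recurrence $|\omega_{k+1}|^2-\omega_k\overline{\omega_{k+2}}=1$ at interior indices (at $k=n-3$ one must keep the unsubstituted form). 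Your blockwise verification that $[Z_+,Z_-]=0$ is exactly the paper's identity \eqref{compatibility criteria}, and the paper likewise deduces nilpotency from $[\mathfrak{g}_0^{\mathrm{red}},\mathfrak{g}_0^{\mathrm{red}}]=0$; where the paper implicitly uses that everything acts by strictly triangular matrices, your grading-plus-Jacobson argument is valid but heavier than necessary (and your aside that the conformal weight is a difference of traces is not quite the right reason $Z_\pm$ annihilate $\mathfrak{g}_{-2}$ --- that vanishing is built into the block shape \eqref{g0plus}--\eqref{g0minus}).

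The genuine gap is that you never exhibit the sequence. The lemma is an existence statement, and its entire content is the construction of $(\omega_1,\ldots,\omega_{n-1})$ solving your antisymmetric nonlinear recurrence for every $n\geq 3$; you flag this verification as ``the main obstacle'' and then defer it, appealing to \eqref{nilpotent examples Sol} --- a formula defined only inside the paper's proof, which you neither state nor derive, so the citation is circular for a blind attempt. In the paper this step is carried out explicitly: for $n$ even one takes the arithmetic progression $\omega_m=\tfrac{n}{2}-m$, which solves your recurrence identically because $(c-m-1)^2-(c-m)(c-m-2)=1$ for any $c$ while antisymmetry pins $c=\tfrac{n}{2}$; for $n$ odd one takes a suitable window of the $6$-antiperiodic sequence $\kappa_{i+3}=-\kappa_i$ with $(\kappa_1,\kappa_2,\kappa_3)=\tfrac{1}{\sqrt{3}}(1,2,1)$, whose defining property is the Hadamard-product identity \eqref{commutator property for nilpotent examples} --- precisely the statement that consecutive windows of $\kappa$ solve your recurrence --- and the bracket computation \eqref{commutator property Sol for nilpotent examples odd case} is organized around the reversal operation $R$ realizing your reflection relations. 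Given your reduction, supplying and checking such a solution is short; but without it your argument establishes only that the lemma reduces to that recurrence, not that the recurrence has a solution, which is the lemma.
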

\begin{proof}
For $n=3$, the lemma is satisfied by the type \Rmnum{7} structure of Table \ref{main theorem table}. For $n=4$, the lemma is satisfied by Example \ref{9 dimensional submaximal nilpotent example 2}. For $n>4$, we  define such sequences $(\omega_1,\ldots,\omega_{n-1})$ as follows. 

Let $(\kappa_i)$ be the sequence indexed by $\mathbb{Z}$ given by 
\[
\kappa_1=\frac{1}{\sqrt{3}},
\quad
\kappa_2=\frac{2}{\sqrt{3}},
\quad
\kappa_3=\frac{1}{\sqrt{3}},
\quad
\quad\mbox{ and }\quad
\kappa_i=-\kappa_{i+3}
\quad\quad\forall \,i\in\mathbb{Z}.
\]
Consider now the six vectors $v_i^r$ of some length $n-1$ given by
\begin{align}
v_i^r=(\kappa_{r+i},\ldots, \kappa_{r+i+n-1}) \quad\quad\forall\, i\in \{0,\ldots, 5\}
\end{align}
for some integer $r$, and notice that
\begin{align}\label{commutator property for nilpotent examples}
v_1^r\odot v_3^r-v_2^r\odot v_2^r=-(1,\ldots, 1),
\end{align}
independently of $r$, where $v_i\odot v_j$ denotes the Hadamard product. Define 
\begin{align}\label{nilpotent examples Sol}
(\omega_1,\ldots, \omega_{n-1}):=
\begin{cases}
v_{\left[\tfrac{7-n}{2}\right]_6}^0 & \mbox{ if $n$ is odd} \\
\left(\tfrac{n}{2}-1, \tfrac{n}{2}-2,\ldots, 1-\tfrac{n}{2}\right) &\mbox{ if $n$ is even}
\end{cases}
\end{align}
where $\left[\tfrac{7-n}{2}\right]_6$ denotes the equivalence class of $\tfrac{7-n}{2}$ reduced modulo $6$.
It is straightforward now to check that the ARMS given by  \eqref{nilpotent general d symbol} and \eqref{nilpotent examples Sol} indeed defines a subalgebra of  $\mathfrak{g}_{-}\rtimes\mathfrak{csp}(\mathfrak{g}_{-1})$.

We will show now that indeed this choice of $\omega_i$ works. Letting $k$ denote the length of the vectors $v_i^r$, define the permutation $R :\{v_0^r,\ldots, v_5^r\}\to \{v_0^r,\ldots, v_5^r\}$ by 
\[
R (\kappa_{r},\ldots, \kappa_{r+n-1}):=(\kappa_{r+n-2},\kappa_{r+n-3},\ldots,\kappa_{r-1} ).
\]
In other words,
\[
R (v^r_i)=v^r_{[5-n-i-2r]_6}\quad\quad\forall\, k\in\mathbb{N}, \,r\in\mathbb{Z},\,i\in \{0,\ldots, 5\}
\]
where the bracket notation denotes the integer $3-k-i-2r$ reduced modulo $6$.

Applying this last formula, we get
\begin{align}
&-\left[
\mathrm{Diag}(v_i^r) T_{n-1},\mathrm{Diag}\big(R(v_{i}^{r})\big) T_{n-1}
\right]=\\
&\quad\quad\quad=
\mathrm{Diag}\left(v_i^r\odot v_{[5-n-2r-i]_6}^{r+4}-v_{[5-n-2r-i]_6}^{r+3}\odot v_{i}^{r+1}\right)T_{n-1}^2 \\
&\quad\quad\quad=\mathrm{Diag}\left(v_i^r\odot v_{[9-n-2r-i]_6}^{r}-v_{[8-n-2r-i]_6}^{r}\odot v_{i+1}^{r}\right)T_{n-1}^2 ,
\end{align}
and hence, by \eqref{commutator property for nilpotent examples}, if $n$ is odd then 
\begin{align}\label{commutator property Sol for nilpotent examples odd case}
-\left[
\mathrm{Diag}\left(v_{\left[\tfrac{7-n}{2}\right]_6}^0\right) T_{n-1},\mathrm{Diag}\left(R\left(v_{\left[\tfrac{7-n}{2}\right]_6}^0\right)\right) T_{n-1}
\right]=-T_{n-1}^2.
\end{align}
Similarly, if $n$ is even then it is straightforward to check that
\begin{align}\label{commutator property Sol for nilpotent examples even case}
\left[
\mathrm{Diag}\left(\tfrac{n}{2}-1, \tfrac{n}{2}-2,\ldots, 1-\tfrac{n}{2}\right)  T_{n-1},\mathrm{Diag}\left(\tfrac{n}{2}-2, \tfrac{n}{2}-3,\ldots, -\tfrac{n}{2}\right)  T_{n-1}
\right]=-T_{n-1}^2.
\end{align}
With $\Omega$, $H_\ell$, and $A$ as in \eqref{nilpotent symbol}, \eqref{nilpotent general d symbol}, and \eqref{nilpotent examples Sol} we get that $[\Omega,-\overline{H_\ell}^{-1}\Omega^*\overline{H_\ell}]$ equals the left side of \eqref{commutator property Sol for nilpotent examples odd case} or \eqref{commutator property Sol for nilpotent examples even case} depending on the parity of $n$. It then follows readily from \eqref{commutator property Sol for nilpotent examples odd case} or \eqref{commutator property Sol for nilpotent examples even case} that
\begin{align}\label{compatibility criteria}
\left[
\left(\begin{array}{cc}
\Omega & A  \\
0 & -{H_\ell}^{-1}\Omega^{T} H_\ell
\end{array}\right)
,
\left(\begin{array}{cc}
 -\overline{H_\ell}^{-1}\Omega^{*} \overline{H_\ell}& 0 \\
\overline{A } & \overline{\Omega } 
\end{array}\right)
\right]=0.
\end{align}
Noting the matrix representation of this ARMS given in Section \ref{Matrix representations of ARMS}, it follows that $[\mathfrak{g}_{0}^{\mathrm{red}},\mathfrak{g}_{0}^{\mathrm{red}}]=0$, so $\mathfrak{g}_{0}^{\mathrm{red}}$ is indeed a subalgebra of $\mathfrak{csp}(\mathfrak{g}_{-1})$. Lastly, nilpotency of $\mathfrak{g}^{0,\mathrm{red}}$ follows from $[\mathfrak{g}_{0}^{\mathrm{red}},\mathfrak{g}_{0}^{\mathrm{red}}]=0$.
\end{proof}

\begin{lemma}\label{nilpotent symbols that do not have nilpotent ARMS lemma}
There exists a unique sequence $(\omega_1,\omega_2,\ldots)$ such that for any odd integer $n>3$ the ARMS encoded by the $(n-1)\times(n-1)$ matrices \eqref{nilpotent symbol},
\begin{align}\label{nilpotent symbol with non-nilpotent ARMS a}
\Omega&=\mathrm{Diag}(n-2,n-4,\ldots,2-n) +\\
& \indent+\sum_{k=1}^{(n-1)/2} \mathrm{Diag}\left(\tfrac{n-2k-1}{2}\omega_k,\left(\tfrac{n-2k-1}{2}-1\right)\omega_k,\ldots, \left(\tfrac{n-2k-1}{2}-n-1\right)\omega_k\right)T_{n-1}^{2k},
\end{align}
and
\begin{align}\label{nilpotent symbol with non-nilpotent ARMS b}
\mathscr{A}_0= \mathrm{span}\left\{\left. \mathrm{Diag}(a,-a,a,-a,\ldots, (-1)^{n}a) T^{2k+1}_{n-1}\,\right|\,a\in\mathbb{C}\,,k\in\mathbb{N}\right\}
\end{align}
is a subalgebra of $\mathfrak{g}_{-}\rtimes\mathfrak{csp}(\mathfrak{g}_{-1})$, and hence the flat structure generated by this ARMS has the CR symbol encoded by  \eqref{nilpotent symbol}. The sequence $\{\omega_s\}$ begins
\[
\omega_1=\frac{1}{2},
\quad
\omega_2=-\frac{1}{12},
\quad
\omega_3=\frac{1}{40},
\quad
\omega_4=-\frac{23}{2520},
\quad
\omega_5=\frac{67}{18144},
\quad\ldots
\] 
and is, in general, defined by the recursive formula
\begin{align}\label{nilpotent-nonnilpotent example defining formula}
\omega_s:=\frac{s }{2-4s} \sum_{k=1}^{s-1} \omega_{s-k}\omega_{k} 
\quad\quad\forall \,s>1.
\end{align}
\end{lemma}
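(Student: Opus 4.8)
The plan is to verify the subalgebra property directly through Lemma \ref{system of conditions for homogeneity lemma}, reducing the system \eqref{system} (with $r=1$) to the scalar recursion \eqref{nilpotent-nonnilpotent example defining formula}. Throughout I abbreviate $T:=T_{n-1}$, write $J:=H_\ell$ for the reverse-identity matrix, and introduce the sign matrix $S:=\mathrm{Diag}(1,-1,1,\ldots,(-1)^{n})$; all matrices in play are real, so the conjugations in \eqref{system} may be dropped. First I would record the elementary identities $J^2=S^2=I$, $JTJ=T^{T}$, $STS=-T$, and --- using that $n$ is odd --- $SJ=-JS$. The relation $STS=-T$ says that $S$ anticommutes with odd powers of $T$ and commutes with even powers; combined with the fact that $\Omega$ and $J\Omega^{T}J$ are supported on \emph{even} superdiagonals of $T$ while every element of $\mathscr A_0$ sits on \emph{odd} superdiagonals, this parity splitting organizes the whole computation.

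Step A is to show $\mathscr A_0$ is an abelian subalgebra of $\mathscr A$: writing a general element as $ST^{2k+1}$ and using $ST^{2k+1}=-T^{2k+1}S$ gives $(ST^{2k+1})(ST^{2l+1})=-T^{2(k+l+1)}=(ST^{2l+1})(ST^{2k+1})$, so $[\mathscr A_0,\mathscr A_0]=0$. Step B handles items (i)--(iii). Using $SJ=-JS$ one finds $\alpha A H_\ell^{-1}+A H_\ell^{-1}\alpha^{T}=0$, so $\eta^1_{\alpha,1}=0$ for all $\alpha\in\mathscr A_0$. Item (ii) then reads $[\alpha,\Omega]\in\mathscr A_0$; writing $\Omega=\sum_m C_m T^{2m}$ with each $C_m$ a diagonal arithmetic progression and taking $\alpha=ST^{2j+1}$, the bracket becomes $\sum_m S(\widehat{C_m}-C_m)T^{2m+2j+1}$, where $\widehat{C_m}$ is the one-step shift of $C_m$; since the successive differences of an arithmetic progression are constant, $\widehat{C_m}-C_m$ is a scalar matrix, placing the bracket in $\mathscr A_0$. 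Thus the arithmetic-progression form of the diagonals in \eqref{nilpotent symbol with non-nilpotent ARMS a} is exactly what makes (ii) hold with no further constraint, and a parallel computation verifies (iii) and extracts the scalar $\mu^1_{1,1}$.

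The heart of the proof is Step C, item (iv). The matrix
\[
\beta := \bigl[J\Omega^{T}J,\ \Omega\bigr] + A^2 - \mu^1_{1,1}\bigl(\Omega + J\Omega^{T}J\bigr)
\]
is supported purely on even superdiagonals, whereas $\mathscr A_0$ lives on odd ones; hence the requirement $\beta\in\mathscr A_0$ forces $\beta=0$, exactly as the vanishing \eqref{compatibility criteria} did in Lemma \ref{nilpotent model sequence}, now with the extra $\mu$-terms. Substituting the explicit $\Omega$ of \eqref{nilpotent symbol with non-nilpotent ARMS a}, whose coefficient of $T^{2k}$ is $\omega_k$ times a fixed arithmetic progression and whose $T^0$ term is $D_0=\mathrm{Diag}(n-2,\ldots,2-n)$, I would collect the coefficient of $T^{2s}$ in $\beta=0$. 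The quadratic part of $\bigl[J\Omega^{T}J,\Omega\bigr]$ contributes $\sum_{k=1}^{s-1}\omega_{s-k}\omega_k$ times a fixed diagonal pattern, the cross-terms of this commutator with $D_0$ together with the $\mu$-terms contribute a multiple of $\omega_s$, and $A^2=T^2$ fixes the lowest order, pinning down $\omega_1=\tfrac12$. The decisive feature is that the prescribed progressions in \eqref{nilpotent symbol with non-nilpotent ARMS a} are engineered so that at each order the diagonal coefficient matrix is a single scalar multiple of one fixed pattern; the surviving scalar equation is $n$-independent and rearranges to $\omega_s=\tfrac{s}{2-4s}\sum_{k=1}^{s-1}\omega_{s-k}\omega_k$, which is \eqref{nilpotent-nonnilpotent example defining formula}. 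Since $2-4s\neq 0$, this determines each $\omega_s$ uniquely from $\omega_1,\ldots,\omega_{s-1}$, yielding both existence and uniqueness of the sequence (the same sequence for all odd $n>3$); that the generated flat structure has CR symbol \eqref{nilpotent symbol} is then immediate from Remark \ref{symbol_as_tuple}.

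The main obstacle I anticipate is precisely the bookkeeping in Step C: tracking how each diagonal factor commutes past a power of $T$ (each move shifts the diagonal entries by one), handling the boundary terms killed by the nilpotency truncation $T^{n-1}=0$, and verifying that the $n$-dependent coefficients collapse to a single $n$-free scalar identity at every order $s$. To keep this manageable I would encode the diagonal data through the generating function $\sum_s\omega_s x^s$ and recognize \eqref{nilpotent-nonnilpotent example defining formula} as the Taylor recursion of a Riccati-type relation, which both streamlines the order-by-order verification and makes the $n$-independence transparent.
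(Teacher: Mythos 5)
Your proposal is correct in substance and reaches the same computational core as the paper, but it organizes the argument differently. The paper proceeds by induction on the odd dimension $n$: it verifies the base case $n=5$ explicitly, posits the stronger bracket identity \eqref{02 and 0-2 brackets} — whose $\mathfrak{g}_{0,0}$-component is precisely your equation $\beta=0$, with $\mu^1_{1,1}=-2$ — and at each inductive step observes that only the top coefficient $\omega_{(n-3)/2}$ is new, that all scalar components of \eqref{02 and 0-2 brackets} away from the four positions $(1,n-2),(2,n-1),(n,n-2),(n,n-1)$ hold automatically given the smaller-$n$ cases, and that the four remaining components each yield \eqref{nilpotent-nonnilpotent example defining formula} with $s=(n-3)/2$; closure of $[\mathfrak{g}_{0,0},\mathfrak{g}_{0,+}]$ is then checked as in \eqref{02 and 00 brackets}. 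You instead fix $n$, verify the four items of \eqref{system} directly, and expand in powers of $T$. Your Steps A and B check out (one small slip: for $\alpha=ST^{2j+1}$ the relevant shift of each diagonal progression is by $2j+1$ steps, not one, though the difference is still scalar, so the conclusion stands), and your parity observation — that $\beta$ is supported on even superdiagonals while $\mathscr{A}_0$ lives on odd ones, so the membership $\beta\in\mathscr{A}_0$ in item (iv) \emph{forces} $\beta=0$ — is a genuine gain in transparency: the paper verifies \eqref{02 and 0-2 brackets} as an ansatz and extracts uniqueness from its corner entries, whereas your argument explains why the vanishing is necessary rather than merely sufficient, which cleanly grounds the uniqueness claim. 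What your route leaves asserted rather than proven is exactly the point the paper's induction exists to handle: that at each order $T^{2s}$ the $n-1-2s$ entrywise equations from the $2s$-th superdiagonal of $\beta=0$ collapse to a single $n$-independent scalar equation in $\omega_s$ — an a priori overdetermined linear system in one unknown. The paper disposes of this by comparison with smaller $n$ plus the corner computation; your generating-function device is a plausible way to do the same bookkeeping at fixed $n$, and in exchange your per-order extraction with $2-4s\neq 0$ delivers uniqueness of the entire sequence, and its independence of $n$, in one stroke rather than threading it through the induction. Since the paper itself presents its proof as an outlined direct calculation, your plan is a legitimate, essentially equivalent reorganization with one nice structural improvement, not a gap.
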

\begin{proof}
This proof is a matter of direct calculation, which we outline here in sufficient detail to reproduce. Some explicit formulas are, however, omitted.

Apply induction on $n$. For the base case of induction, first assume $n=5$, and set $\omega_{1}=\frac{1}{2} $.
In this case, by direct calculation with  \eqref{nilpotent symbol} and \eqref{nilpotent symbol with non-nilpotent ARMS a} we get
\begin{equation}\label{02 and 0-2 brackets}
\small
\begin{split}
\left[
\left(\begin{array}{cc}
\Omega & A  \\
0 & -{H_\ell}^{-1}\Omega^{T} H_\ell
\end{array}\right)
,
\left(\begin{array}{cc}
 -\overline{H_\ell}^{-1}\Omega^{*} \overline{H_\ell}& 0 \\
\overline{A} & \overline{\Omega} 
\end{array}\right)
\right]&=
-2 \left(\begin{array}{cc}
\Omega & A  \\
0 & -{H_\ell}^{-1}\Omega^{T} H_\ell
\end{array}\right)+\\
&\indent
+2\left(\begin{array}{cc}
 -\overline{H_\ell}^{-1}\Omega^{*} \overline{H_\ell}& 0 \\
\overline{A} & \overline{\Omega} 
\end{array}\right)
\end{split}
\end{equation}
and
\begin{align}\label{02 and 00 brackets}
\left[
\left(\begin{array}{cc}
\alpha& 0 \\
0 & -{H_\ell}^{-1}\alpha^{T} H_\ell
\end{array}\right),
\left(\begin{array}{cc}
\Omega & A  \\
0 & -{H_\ell}^{-1}\Omega^{T} H_\ell
\end{array}\right)
\right]
=
\left(\begin{array}{cc}
\beta& 0 \\
0 & -{H_\ell}^{-1}\beta^{T} H_\ell
\end{array}\right),
\end{align}
where
\begin{align}
\alpha=
\left(
\begin{array}{cccc}
0 & a_1 & 0 & a_2 \\
0 & 0 & -a_1 & 0 \\
0 & 0 & 0 & a_1 \\
0 & 0 & 0 & 0
\end{array}
\right)
\,\,\mbox{ and }\,\,
\beta=
\left(
\begin{array}{cccc}
0 & -2 a_1 & 0 & 6 a_2 + a_1/2\\
0 & 0 & 2 a_1 & 0 \\
0 & 0 & 0 & -2 a_1 \\
0 & 0 & 0 & 0
\end{array}
\right).
\end{align}
It follows that $[\mathfrak{g}_{0,-}^{\mathrm{red}},\mathfrak{g}_{0,0}^{\mathrm{red}}]\subset\mathfrak{g}_{0,0}^{\mathrm{red}}$, and hence $\mathfrak{g}_{0}^{\mathrm{red}}$ is indeed a subalgebra of $\mathfrak{g}_{-}\rtimes\mathfrak{csp}(\mathfrak{g}_{-1})$ when $n=5$.

Proceeding, suppose that for some odd integer $k> 5$, we have chosen $\omega_1,\ldots, \omega_{(k-5)/2}$ such that the lemma holds for all odd $n<k$, and now let us assume $n=k$. We will see that this assumption implies  \eqref{02 and 0-2 brackets}, but, to begin, let us furthermore assume that our choice satisfies  \eqref{02 and 0-2 brackets} for all $n<k$. We will choose $\omega_{(n-3)/2}$ so that  \eqref{02 and 0-2 brackets} holds with $n=k$, and need to show that such a choice exists. 

By direct calculation, one finds that the upper right and lower left $n\times n$ blocks in the matrix equation \eqref{02 and 0-2 brackets} holds for any choice of $\omega_{(n-3)/2}$. Using the assumption that \eqref{02 and 0-2 brackets} holds for all $n<k$ with our choice of $\omega_1,\ldots, \omega_{(k-5)/2}$, it is also straightforward to calculate that with $n=k$ the $(i,j)$ scalar component of \eqref{02 and 0-2 brackets} holds for each $(i,j)\not\in \{(1,n-2),(2,n-1),(n,n-2),(n,n-1)\}$, that is for every scalar component in which $\omega_{(n-3)/2}$ does not appear on either side of \eqref{02 and 0-2 brackets}. Lastly, each $(i,j)$ scalar component of \eqref{02 and 0-2 brackets}  for $(i,j)\in \{(1,n-2),(2,n-1),(n,n-2),(n,n-1)\}$ gives the same defining equation for $\omega_{(n-3)/2}$, namely, \eqref{nilpotent-nonnilpotent example defining formula} with $s=(n-3)/2$. In this way, $\omega_{(n-3)/2}$ is uniquely determined. With this $\omega_{(n-3)/2}$ set, another direct calculation of the Lie bracket in \eqref{02 and 00 brackets} with $\alpha$ taken as an arbitrary matrix in $ \mathscr{A}_0$ shows that the ARMS is indeed closed under Lie brackets.
\end{proof}

By linking and extending the ARMS of Lemma \ref{nilpotent model sequence}, we obtain the following theorem.
\begin{theorem}\label{nilpotent CR symbols have hmodels}
Every CR symbol encoded by $(H_\ell, A)$ with $A$ nilpotent can be obtained from a homogeneous $2$-nondegenerate hypersurface.
\end{theorem}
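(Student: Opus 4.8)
The plan is to reduce the statement to the canonical form of the pair $(\ell, A)$ and then to assemble the required flat structure from the building blocks of Lemma \ref{nilpotent model sequence} using the linking and extension constructions. First I would invoke the canonical form for an $\ell$-self-adjoint antilinear operator \cite[Theorem 2.2]{sykes2020canonical}: since $A$ is nilpotent, the pair $(\ell, A)$ decomposes as an $\ell$-orthogonal direct sum of indecomposable nilpotent blocks. Each such block of size $m \geq 2$ is equivalent, after a change of basis and rescaling, to the pair with $A = T_{m}$ and anti-diagonal Hermitian form, i.e.\ the symbol \eqref{nilpotent symbol} with $n = m+1$; each block of size $1$ has $A = 0$ and carries only a sign $\epsilon = \pm 1$ recording the Levi signature in that direction. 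In this way the given CR symbol is written as a direct sum of pieces, each of which we must realize and then recombine into a single ARMS.

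Next I would realize the pieces. Every block of size $m \geq 2$ is realized by Lemma \ref{nilpotent model sequence} via an explicit ARMS $\{H_\ell, A, \Omega, \mathscr{A}_0\}$ satisfying the subalgebra property (Definition \ref{subalgebra property}) with $\mathscr{A}_0 = 0$. To combine these I would verify that they are pairwise compatible in the sense of Definition \ref{compatibility criteria definition}: because each has $\mathscr{A}_0 = 0$, the map $\Psi$ is trivial (so its image is $\{0\}$) and $\Phi(1,1) \in \mathscr{A}_0 = \{0\}$, whence $\Psi \circ \Phi(1,1) = 0$; a short computation with item (iii) of \eqref{system}, using that $\Omega$ is strictly upper triangular and $H_\ell$ anti-diagonal, gives $\mu_{1,1}^1 = 0$, so every such representation is already normalized in (case 1) with matching invariants. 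Hence all blocks of size $\geq 2$ are mutually compatible, and iterated linking (Definition \ref{linking of ARMS}), with the signs $\epsilon = \pm 1$ chosen to match the Hermitian form of each block, produces a single rank-one ARMS with the subalgebra property whose $(\widetilde{H_\ell}, \widetilde A)$ is their direct sum; the linked $\widetilde{\mathscr{A}_0}$ is again $0$, so the compatibility invariants persist through each step and the linking can be iterated. The size-$1$ blocks, on which $A$ acts trivially, are exactly the directions adjoined by the $2$-dimensional extensions \eqref{2 dimensional ARMS extension}; applying a positive or negative extension (Definition \ref{ARMS extension}) for each size-$1$ block, with sign matching $\epsilon$, appends these directions while preserving the subalgebra property. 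Since extensions and linkings commute, the order of these operations is immaterial.

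The main obstacle is the bookkeeping of the first paragraph: confirming that the indecomposable nilpotent blocks are exhausted by the single Jordan blocks realized in Lemma \ref{nilpotent model sequence} together with the trivial size-$1$ blocks, and that no additional sign or phase invariant survives on the blocks of size $\geq 2$ beyond what the $\epsilon = \pm 1$ freedom of linking and extending already supplies. This is consistent with the low-dimensional picture, in which type \Rmnum{7} is the unique nilpotent $2 \times 2$ symbol; in general it should follow from \cite[Theorem 2.2]{sykes2020canonical} after observing that a complex rescaling of a Jordan chain normalizes the anti-diagonal form of an antilinear block. Granting this, the ARMS produced by the above linkings and extensions satisfies the subalgebra property, and by Remark \ref{symbol_as_tuple} its flat structure (Definition \ref{flat structure definition}) is a homogeneous $2$-nondegenerate hypersurface whose CR symbol is the prescribed $(H_\ell, A)$, which completes the argument.
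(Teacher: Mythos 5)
Your proposal is correct and follows essentially the same route as the paper's proof: reduce $(H_\ell,A)$ to Jordan normal form via \cite[Theorem 2.2]{sykes2020canonical}, realize each Jordan block of size $\geq 2$ by the ARMS of Lemma \ref{nilpotent model sequence}, absorb the size-$1$ blocks as $2$-dimensional extensions, and link everything after checking the compatibility criteria of Definition \ref{compatibility criteria definition}. The only cosmetic difference is that you verify $\mu_{1,1}^1=0$ and $\Psi\circ\Phi(1,1)=0$ by direct computation with item (iii) of \eqref{system}, whereas the paper reads both off at once from the vanishing bracket \eqref{compatibility criteria}; the substance is identical.
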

\begin{proof}
If $A$ has a $1$-dimensional eigenspace, then Lemma \ref{nilpotent model sequence} gives an example of an ARMS whose flat structures has the corresponding CR symbol $(H_\ell, A)$. For  an arbitrary nilpotent $A$, by \cite[Theorem 2.2]{sykes2020canonical}, we  can assume without loss of generality that $A$ is in Jordan normal form, and thus represents the CR symbol of an ARMS obtained by linking and extending ARMS of the type in Lemma \ref{nilpotent model sequence}. For this last point, one needs that indeed all  ARMS in Lemma \ref{nilpotent model sequence} are compatible (in the sense of Definition \ref{linking of ARMS}), and indeed the needed compatibility criteria of Definition \ref{compatibility criteria definition} follow readily from \eqref{compatibility criteria}.
\end{proof}

\begin{remark}
In the very recent work \cite{kolar2023new}, for every CR symbol of the form in Theorem \ref{nilpotent CR symbols have hmodels}, we obtain local coordinate descriptions given as defining equations of large classes of homogeneous $2$-nondegenerate CR hypersurfaces with the given symbol.
\end{remark}

\section{Acknowledgments}
The author would like to thank Igor Zelenko for many helpful discussions on this topic. The author was supported by the GACR grant GA21-09220S.

 \newcommand{\noop}[1]{}

\end{document}